\DeclareSymbolFontAlphabet{\mathbb}{AMSb}%
\DeclareSymbolFontAlphabet{\mathbbl}{bbold}
\tikzset{
commutative diagrams/.cd,
arrow style=tikz,
diagrams={>=latex}}
\theoremstyle{thm} \newtheorem{thm}{Theorem}[section]
\theoremstyle{thm*} \newtheorem*{thm*}{Theorem}
\theoremstyle{modelthm*} \newtheorem*{modelthm*}{Model Theorem}
\newtheorem{bigthm}{Theorem}
\newtheorem{prop}[thm]{Proposition}
\newtheorem{lem}[thm]{Lemma}
\newtheorem{claim}[thm]{Claim}
\newtheorem{corol}[thm]{Corollary}
\newtheorem{conj}[thm]{Conjecture}
\theoremstyle{definition} 
\theoremstyle{remark} \newtheorem{ex}[thm]{Example}
\theoremstyle{remark} \newtheorem{rem}[thm]{Remark}
\theoremstyle{definition} \newtheorem{defi}[thm]{Definition}
\theoremstyle{definition} \newtheorem{assumption}[thm]{Assumption}
\newcommand{\quotient}[2]{{\left.\raisebox{-.2em}{$#1$}\middle\backslash\raisebox{.2em}{$#2$}\right.}}
\newcommand{\quotientd}[2]{{\left.\raisebox{.2em}{$#1$}\middle\slash\raisebox{-.2em}{$#2$}\right.}}
\newcommand{\abs}[1]{ {\left| #1 \right| } }
\newcommand{\norm}[1] {\| #1 \| }
\newcommand{\X}{{\overline{X}}}
\newcommand{\xleftrightarrow}[2][]{\ext@arrow 3359\leftrightarrowfill@{#1}{#2}}
\newcommand{\xdashrightarrow}[2][]{\ext@arrow 0359\rightarrowfill@@{#1}{#2}}
\newcommand{\xdashleftarrow}[2][]{\ext@arrow 3095\leftarrowfill@@{#1}{#2}}
\newcommand{\xdashleftrightarrow}[2][]{\ext@arrow 3359\leftrightarrowfill@@{#1}{#2}}
\def\rightarrowfill@@{\arrowfill@@\relax\relbar\rightarrow}
\def\leftarrowfill@@{\arrowfill@@\leftarrow\relbar\relax}
\def\leftrightarrowfill@@{\arrowfill@@\leftarrow\relbar\rightarrow}
\def\arrowfill@@#1#2#3#4{%
  $\m@th\thickmuskip0mu\medmuskip\thickmuskip\thinmuskip\thickmuskip
   \relax#4#1
   \xleaders\hbox{$#4#2$}\hfill
   #3$%
}
\def\blfootnote{\xdef\@thefnmark{}\@footnotetext}
\newcommand{\thickhline}{%
    \noalign {\ifnum 0=`}\fi \hrule height 1pt
    \futurelet \reserved@a \@xhline
}
\newcolumntype{"}{@{\hskip\tabcolsep\vrule width 1pt\hskip\tabcolsep}}
\numberwithin{equation}{section}
\begin{document}

\author{Benoît Cadorel}
\address{Institut Élie Cartan de Lorraine \\ UMR 7502 \\ Université de Lorraine, Site de Nancy \\ B.P. 70239, F-54506 Vandoeuvre-lès-Nancy Cedex}
\email{benoit.cadorel@univ-lorraine.fr}

\author{Simone Diverio}
\address{Dipartimento di Matematica \lq\lq Guido Castelnuovo\rq\rq{}, Sapienza Università di Roma, Piazzale Aldo Moro 5, I-00185 Roma, Italy}
\email{diverio@mat.uniroma1.it}

\author{Henri Guenancia}
\address{Institut de Mathématiques de Toulouse; UMR 5219, Université de Toulouse; CNRS, UPS, 118 route de Narbonne, F-31062 Toulouse Cedex 9, France}
\email{henri.guenancia@math.cnrs.fr}

\keywords{Logarithmic variants of Lang's conjecture, quotients of bounded domains, varieties and orbifolds of log-general type, Bergman metric, $L^2$-estimates for $\bar\partial$-equation}
\subjclass[2020]{Primary: 32J25; Secondary: 32Q45, 32Q30, 14E20, 14E22.}

\title{On subvarieties of singular quotients of bounded domains}
\date{\today}
\begin{abstract}
Let $X$ be a quotient of a bounded domain in $\mathbb C^n$. Under suitable assumptions, we prove that every subvariety of $X$ not included in the branch locus of the quotient map is of log general type in some orbifold sense. This generalizes a recent result by Boucksom and Diverio, which treated the case of compact, étale quotients. 

Finally, in the case where $X$ is compact, we give a sufficient condition under which there exists a proper analytic subset  of $X$ containing all entire curves and all subvarieties not of general type (meant this time in in the usual sense as opposed to the orbifold sense). 
\end{abstract}
\maketitle

\section{Introduction}

Let $X$ be a quotient of a bounded domain $\Omega \subset \mathbb C^n$ by some discrete automorphism group $\Gamma \subset \mathrm{Aut}(\Omega)$. A lot of recent work has been devoted to the research of complex hyperbolicity properties of these quotients $X$, \textsl{i.e.} of restrictions on the geometry of entire curves in $X$, or on the type of its subvarieties. These quotients provide indeed basic examples to test the general conjectures in complex hyperbolicity, in particular the \emph{Green--Griffiths--Lang conjecture}:

\begin{conj} [Green, Griffiths \cite{GG80}, Lang \cite{Lan86}] \label{conjGGL} Let $M$ be a complex projective manifold of general type. Then there exists a proper algebraic subset $\mathrm{Exc}(M) \subsetneq M$ containing all the images of non constant holomorphic maps $\mathbb C \longrightarrow M$, and all the subvarieties of $M$ which are not of general type.
\end{conj}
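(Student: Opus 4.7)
The plan is to attack Conjecture \ref{conjGGL} via the jet-differential strategy pioneered by Bloch, Green-Griffiths, Siu, and Demailly, which is the main known route toward this statement. Since $M$ is of general type, its canonical bundle $K_M$ is big, and one hopes to leverage this positivity on the tower of higher-order directed jets over $M$ to produce enough algebraic differential equations constraining \emph{both} entire curves and non-general-type subvarieties.

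First, I would aim to produce a large supply of global jet differentials vanishing along an ample divisor. Working on Demailly's directed tower $X_k \to \cdots \to X_1 \to M$ with its tautological line bundles $\mathcal{O}_{X_k}(1)$, I would apply Demailly's algebraic holomorphic Morse inequalities to show that, for $k \gg 1$ and suitable weights $\mathbf{a}=(a_1, \ldots, a_k)$, the direct images $E_{k,m}\Omega_M$ of invariant jet differentials of order $k$ and weighted degree $m$ admit nonzero sections in $H^0\bigl(M, E_{k,m}\Omega_M \otimes A^{-1}\bigr)$ for $m \gg 1$, where $A$ is some fixed ample line bundle.

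Second, I would invoke the fundamental vanishing theorem of Siu-Demailly: every nonconstant holomorphic $f\colon \mathbb C \to M$ must satisfy $P(j_k f) \equiv 0$ for each such jet differential $P$. This forces every entire curve into the common base locus $\mathrm{Bs}$ of the above sections, and the third step is to produce \emph{enough} independent jet differentials so that $\mathrm{Bs}$ is a \emph{proper} subvariety $\mathrm{Exc}(M) \subsetneq M$. For the subvariety statement one would then show, roughly, that any positive-dimensional $Z \not\subset \mathrm{Exc}(M)$ inherits by restriction enough invariant jet differentials vanishing on an ample divisor of $Z$ to deduce, via an algebraic degeneracy result in the spirit of McQuillan, Bogomolov, or Campana, that $Z$ itself is of general type.

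The main obstacle is of course that this is \emph{the} Green-Griffiths-Lang conjecture itself, which remains wide open in full generality. The Morse-inequality step only delivers the required jet differentials under Chern-number positivity conditions that are currently verified in specific situations (generic projective hypersurfaces of high degree after Diverio-Merker-Rousseau and Siu, projective manifolds with big cotangent bundle, compact \'etale quotients of bounded domains after Boucksom-Diverio, and now the singular quotients treated in the present paper). The passage from entire curves to arbitrary non-general-type subvarieties requires genuinely new ingredients beyond the vanishing theorem for entire curves, and any unconditional proof would rest on a fundamentally new positivity principle for the cotangent sheaves of arbitrary varieties of general type.
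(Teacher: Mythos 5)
The statement you were asked about is not a theorem of the paper at all: it is the Green--Griffiths--Lang conjecture, quoted in the introduction purely as motivation, and the paper offers no proof of it. What the paper actually proves are special cases and orbifold/logarithmic analogues for (possibly singular, possibly non-compact) quotients of bounded domains (Theorems A--D), and it does so by completely different means than the ones you sketch: Bergman kernels descending to singular metrics on adjoint bundles $K+\Delta$, covering divisors controlling the boundary behaviour, Boucksom's bigness criterion, $L^2$-estimates for $\bar\partial$ on weakly pseudoconvex modifications, and the Ahlfors--Schwarz lemma --- no jet differentials appear anywhere.

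Your proposal, as you yourself concede in the final paragraph, is not a proof but an outline of the standard jet-differential program, and the gaps you would need to fill are exactly the open parts of the conjecture. Concretely: (1) while Demailly's holomorphic Morse inequalities do produce Green--Griffiths jet differentials vanishing on an ample divisor for any projective manifold of general type, there is no known control of the common base locus, so one cannot conclude that $\mathrm{Exc}(M)$ is a \emph{proper} subvariety; and (2) even granting a proper exceptional locus for entire curves, the fundamental vanishing theorem says nothing about subvarieties, and no mechanism is known that converts restricted jet differentials on a subvariety $Z\not\subset\mathrm{Exc}(M)$ into bigness of $K_{\widehat Z}$ --- the appeal to ``an algebraic degeneracy result in the spirit of McQuillan, Bogomolov, or Campana'' is a placeholder for a statement that does not currently exist in the required generality. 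So the proposal cannot be assessed as a correct alternative route; it is a restatement of the conjecture's difficulty. If your goal was to engage with what this paper establishes, the relevant targets are Theorems A--D, whose proofs rest on the Bergman-metric and $L^2$ machinery described above rather than on jet differentials.
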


Notably, in the case where $X = \quotient{\Gamma}{\Omega}$ is a quotient by a cocompact group acting \emph{freely} and properly discontinuously on $\Omega$, $X$ will be a complex projective manifold; it is an easy application of Liouville theorem that $X$ cannot contain any entire curve. The first statement of Conjecture~\ref{conjGGL} is trivial in this case while the second statement was recently obtained by Boucksom and the second author \cite{BD18} where they show that all subvarieties of $X$ are of general type.

When the action of $\Gamma$ is no more free or cocompact, $X$ itself may already be not of general type (cf. \textsl{e.g.} Section~\ref{examples2}) and it may not enjoy such nice hyperbolicity properties. However, there is a general philosophy that statements true in the smooth compact case should continue to hold when dealing with the correct \emph{orbifold} or \emph{logarithmic} structures in the singular or non-compact case. For a precise statement of what it is expected in the general framework of (directed) orbifolds, we refer the reader for instance to \cite[Conjecture 0.5]{CDDR21}.

Our methods yield the following archetypical result.

\begin{modelthm*}
Assume that $\overline{X}$ is a smooth projective manifold, and that $D \subset \overline{X}$ is a reduced divisor such that $X = \overline{X} \setminus D$ admits an étale cover biholomorphic to a bounded domain $\Omega \subset \mathbb C^n$. Then the pair $(\overline{X}, D)$ is of \emph{log general type}, i.e. $K_{\overline{X}} + D$ is big.
\end{modelthm*}
This theorem will appear as a particular case of several results (for instance, Theorem~\ref{thm1} or Theorem~\ref{thm2} below) whose main point of focus will be the analysis of the defect of hyperbolicity of such quotients in the spirit of the philosophy above.

Before continuing, let us point out two remarkable enough instances where our Model Theorem applies.
\begin{ex}
By a classical result of Griffiths \cite{gri71}, in any projective $n$-dimensional manifold $\overline{X}$, there exist nonempty Zariski open subsets $X = \overline{X} \setminus D$ which are uniformized by a bounded domain (moreover, of holomorphy) in $\mathbb C^n$. Even more, any point of any smooth, irreducible, quasi-projective variety over the complex numbers has a Zariski open neighborhood with this property. Thus, in principle, \emph{every} smooth projective manifold falls in the scope of the theorem above. Of course, the main interest of our result resides in situations where we have a precise description of the boundary divisor $D \subset \overline{X}$. 
\end{ex} 
Let us now give a perhaps more concrete example.
\begin{ex}\label{exmgn}
Fix integers $g\ge 2$, and $n\ge 0$, and consider the Teichm\"uller space $\mathcal T_{g,n}$ of compact Riemann surfaces of genus $g$ with $n$ marked points. It is well-known that the Bers embedding realizes $\mathcal T_{g,n}$ as a (contractible) bounded domain $\mathcal B_{g,n}\subset\mathbb C^{3g-3+n}$, which we call the \emph{Bers domain}, and actually provides an isometry between the Kobayashi distance of  $\mathcal B_{g,n}$ and the Teichm\"uller metric on $\mathcal T_{g,n}$. This domain is very far from being symmetric since it is indeed even not homogeneous: its isometry group is the Mapping Class Group $\operatorname{MCG}_{g,n}$, whose action is properly discontinuously and whose orbits are precisely the equivalent complex structures, so that the quotient $\mathcal T_{g,n}/\operatorname{MCG}_{g,n}$ is exactely the moduli space $\mathcal M_{g,n}$. 

Now, the action of $\operatorname{MCG}_{g,n}$ is not free in general (still, with finite stabilizers), but it is easy to see that it is if $n>2g+2$, so that in this case $\mathcal T_{g,n}$ is the universal cover of the smooth quasi-projective manifold $\mathcal M_{g,n}$. So therefore, in this situation our Model Theorem applies to the Deligne--Mumford compactification $\overline{\mathcal M_{g,n}}$ and gives a relatively elementary and direct proof that it is of log-general type with respect to the boundary divisor.

This (and more indeed: the canonical bundle plus the boundary divisor is not only big but even ample) was of course previously known, but to the best of our knowledge the proof relies at least on a precise description of the Picard group of $\overline{\mathcal M_{g,n}}$, together with the Cornalba--Harris ampleness criterion, which in turn employs hard GIT. 
Unfortunately, for the time being, we don't know if it is actually possible with our method to prove the ampleness of the logarithmic canonical bundle.

All this can be obtained with our Model Theorem, which is actually some sort of oversimplified statement with respect to our Theorems A, B, and C. See Examples \ref{ex:dehntwist}, \ref{ex:submfdmgn} and Remark \ref{rem:orbmgn} below for further comments on this.
\end{ex}

\begin{rem}
We would like to further observe that the Weil--Petersson metric is mapping class group invariant and thus descends to $\mathcal M_{g,n}$. It has indeed negative sectional curvature. It is known that its behaviour near the boundary gives that its Riemannian sectional curvature has as infimum negative infinity and as supremum zero. But, on the other hand, its holomorphic sectional, Ricci and scalar curvatures are all bounded above by genus-dependent negative constants. Therefore, we can also obtain the log-general type property of $\mathcal{M}_{g,n}$ by a nice application of Guenancia's theorem \cite[Theorem B]{gue18}.
\end{rem}

\medskip
 
 \subsection{Main results} The general set-up in which our results are stated involves a variety $\overline{X}$ containing a Zariski open subset $X$ admitting an étale cover $\Omega$ on which the curvature of the \emph{Bergman metric} on the canonical bundle $K_\Omega$ is positive definite at a generic point. We will call such manifolds $\Omega$ \emph{weakly Bergman manifolds}: this class of manifolds contain bounded domains, and more generally complex manifolds of \emph{bounded type} in the sense of \cite{BD18}, i.e. manifolds admitting a bounded, strictly psh function. \\

%Our first result is a criterion for a pair $(\overline{X}, \Delta)$ to be of log general type, where $\overline{X}$ is as above. \medskip

$\bullet$ \textsc{A criterion for pairs to be of log general type}

\noindent
Consider a compact K\"ahler manifold $\overline{X}$, and let $D$ be a reduced divisor on $\overline{X}$ such that $X = \overline{X} \setminus D$ is uniformized by a \emph{weakly Bergman} manifold \emph{via} a covering map $p : \Omega \longrightarrow X$. In this situation, we can then endow each component $D_i$ of $D$ with a multiplicity $m_i \in \mathbb N^\ast \cup \{\infty \}$, representing in some sense (cf. Definition \ref{defcovering}) the order of ramification of $p$ around a general point of $D_i$. We will call $\Delta_{\X} = \sum_{i} (1 - \frac{1}{m_i}) D_i$ the \emph{covering divisor} associated with our data: the pair $(\overline{X}, \Delta_\X)$ is then an orbifold pair in the sense of Campana \cite{cam04}.

Then, our first main result can be stated as follows, cf. Theorem~\ref{logbig}.

\begin{bigthm}
 \label{thm1}
Let $\overline X$ be a compact Kähler manifold endowed with a reduced divisor $D$ such that $\overline X\setminus D$ admits an étale cover biholomorphic to a weakly Bergman manifold. Let $\Delta_\X$ be the associated covering divisor on $\overline X$. 
Then, the $\mathbb Q$-line bundle $K_{\overline X}+\Delta_\X$ is big. 
\end{bigthm}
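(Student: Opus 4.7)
The plan is to construct a singular Hermitian metric on the $\mathbb Q$-line bundle $L := K_{\overline X} + \Delta_{\overline X}$ whose curvature current is positive with strictly positive non-pluripolar Monge--Amp\`ere mass, and then to conclude bigness via Boucksom's volume criterion.

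The metric will come by $\Gamma$-descent from the Bergman metric $h_B$ on $K_\Omega$: this is the natural candidate since $h_B$ is $\operatorname{Aut}(\Omega)$-invariant (hence descends to the \'etale locus of the covering $p\colon\Omega\to X$) and, by the weakly Bergman assumption, its curvature $i\Theta_{h_B}$ is semipositive on $\Omega$ and strictly positive at a generic point. The essential local observation is that around a generic point of a component $D_i\subset D$, the cover $p$ is modelled on $w\mapsto w^{m_i}$, and the orbifold coefficient $1-1/m_i$ in $\Delta_{\overline X}$ is designed so that the Jacobian factor $w^{m_i-1}$ of $p$ and the pole of order $m(1-1/m_i)$ along $D_i$ of a section of $mL$ (for $m$ sufficiently divisible) cancel exactly. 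Consequently, $\Gamma$-invariant holomorphic sections of $mK_\Omega$ are in canonical bijection with sections of $mL$, and $h_B$ descends to a singular Hermitian metric $\tilde h$ on $L$ over $\overline X\setminus (D\cup\operatorname{Branch}(p))$.

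I would then extend $\tilde h$ across the branch locus of $p$ and across $D$ to all of $\overline X$: the first is the standard orbifold descent, and for the second, positivity ensures that the local plurisubharmonic weights of $\tilde h$ extend to psh weights on $\overline X$ by Skoda--El Mir-type theorems, producing a positive current $T=\Theta_{\tilde h}\in c_1(L)$. On the dense open subset of $\overline X$ where $i\Theta_{h_B}$ is strictly positive, the non-pluripolar product $\langle T^n\rangle$ has strictly positive density, so $\int_{\overline X}\langle T^n\rangle>0$, and Boucksom's theorem then yields $\operatorname{vol}(L)>0$, i.e.\ $L$ is big.

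The main obstacle I anticipate is controlling $\tilde h$ near the boundary divisor $D$: one has no direct a priori handle on the behaviour of the Bergman kernel of $\Omega$ along the ends covering $D$, so it is not automatic that the local weights of $\tilde h$ remain bounded above there, nor that the non-pluripolar Monge--Amp\`ere mass of $T$ survives on a set of positive measure. A natural way out is to use Ohsawa--Takegoshi-type $L^2$-extension to produce sufficiently many $\Gamma$-equivariant sections of $mK_\Omega$ concentrated at a generic point of $X$; such sections simultaneously give an effective lower bound on the Bergman kernel and pin down the local weight of $\tilde h$ away from $D$, which should suffice to ensure the positive-volume estimate.
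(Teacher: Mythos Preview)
Your overall strategy---descend the Bergman metric to a singular metric on $K_X$, extend it as a positively curved metric on $K_{\overline X}+\Delta_{\overline X}$, then apply Boucksom's bigness criterion---matches the paper's approach exactly. The difficulty you correctly isolate is the extension across $D$: one must show that the local psh weights of the descended metric remain \emph{bounded above} near each component $D_i$ (after twisting by the canonical singular weight on $\mathcal O(\Delta_{\overline X})$). This is where your proposal has a genuine gap.

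Invoking Skoda--El~Mir does not settle this: that theorem extends closed positive currents with locally finite mass, but even if the curvature current extends, you still need the extension to be the curvature of a metric on $L$, i.e.\ to lie in $c_1(L)$. For that, the local psh weights themselves must extend, and the standard criterion is precisely the upper bound you flag as uncertain. Your proposed remedy via Ohsawa--Takegoshi goes in the wrong direction: producing many $L^2$ sections gives a \emph{lower} bound on the Bergman kernel, hence a lower bound on the psh weight $\phi=\log B$, whereas what is needed to extend $\phi$ across $D$ is an \emph{upper} bound. (The positivity of the non-pluripolar Monge--Amp\`ere mass, by contrast, is immediate from the weakly Bergman hypothesis and needs no extra input.)

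The paper's key observation, which you are missing, is the \emph{monotonicity of the Bergman metric under open inclusion}. Near a generic point of $D_i$, take $U\simeq\mathbb D^*\times\mathbb D^{n-1}$ and let $\widetilde U$ be a connected component of $p^{-1}(U)$. Since $\widetilde U$ is an open subset of $\widetilde X$, restriction of $L^2$ forms gives $h_{\widetilde U}\le h_{\widetilde X}$, hence the descended metrics satisfy $g_U\le g_X$ on $U$, i.e.\ the psh weight of $g_X$ is bounded above by that of $g_U$. But $\widetilde U$ is a concrete domain---either $\mathbb D^*\times\mathbb D^{n-1}$ (covering of degree $m_i<\infty$ via $w_1\mapsto w_1^{m_i}$) or $\mathbb D^n$ (if $m_i=\infty$)---whose Bergman metric one computes explicitly. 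The computation shows that $-\log|dz_1\wedge\dots\wedge dz_n|^2_{g_U}+(1-\tfrac1{m_i})\log|z_1|^2$ is locally bounded above near $z_1=0$, which is exactly the required estimate. Two minor points: in the setting of this theorem $p$ is \'etale (``uniformized''), so there is no branch locus to extend across; and the statement that $\Gamma$-invariant sections of $mK_\Omega$ biject with sections of $mL$ on $\overline X$ presupposes the very same extension result you are trying to prove.
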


\begin{ex}\label{ex:dehntwist}
If we wanted to try to refine the situation of Example \ref{exmgn} using the more precise Theorem \ref{thm1}, we would obtain in this particular setting the same result, since the monodromy around the boundary divisor is infinite cyclic, with generator a Dehn twist, so that all the $m_i$'s are infinite here.
\end{ex}

$\bullet$ \textsc{Quotients of manifolds of bounded type}

\noindent
Let $\Omega$ be a manifold of bounded type in the sense of \cite{BD18}. We want to study the situation where $X$ is a quotient of $\Omega$ by a discrete subgroup $\Gamma \subset \mathrm{Aut}(\Omega)$ acting properly discontinuously. 

In general, $X$ is neither smooth nor compact. Given $V\subset X$ a subvariety, we explained above that one cannot expect $V$ to be of (log) general type in full generality. However, since immersed subvarieties of $\Omega$ are still \emph{weakly Bergman} by \cite{BD18}, it is then possible to apply 
Theorem~\ref{thm1} to the situation where $\overline{X}$ is replaced by a compactification $\overline{V}$ of $V$, and $\Omega$ by the fiber product $V \times_{X} \Omega$, which is still a manifold of bounded type.

The theorem below is an application of the idea above: it provides a particular setting where we can obtain that the \textit{orbifold pair} naturally associated to a modification of $V$ is of general type, cf. Theorem~\ref{newthm}.  
 
\begin{bigthm} 
\label{thm2} 
Let $\overline{X}$ be a normal, compact complex space admitting a K\"ahler resolution. Assume that it admits a Zariski open subset $X =  \overline{X} \setminus D$ which is a quotient of a manifold $\Omega$ of bounded type and let $p : \Omega \longrightarrow X$ be the quotient map. 

\noindent
Let $V \subset \overline{X}$ be a closed subvariety such that $V \not\subset D \cup \mathrm{Sing}(p)$, where $\mathrm{Sing}(p)$ is the locus of singular values of $p$. Let $\pi:\widehat{V}\to V$ be any resolution of singularities of $V$. 
  
\noindent
Then $\widehat{V}$ supports a natural covering divisor $\Delta_{\widehat{V}} = \sum_i \left( 1 - \frac{1}{n_i} \right) F_i$ such that $K_{\widehat{V}} + \Delta_{\widehat{V}}$ is big. Moreover, $\Delta_{\widehat{V}}$ is supported over $ D \cup \mathrm{Sing}(p)$ via $\pi$.
\end{bigthm}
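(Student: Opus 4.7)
The plan is to deduce Theorem~\ref{thm2} from Theorem~\ref{thm1}, applied to a suitable Kähler resolution of $V$ equipped with a natural weakly Bergman cover built from the fiber product with $\Omega$.

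First I would fix a Kähler resolution of singularities $\pi:\widehat V\to V$. Since $\overline X$ admits a Kähler resolution $\sigma:\overline X'\to\overline X$, I can first look at the strict transform $V'\subset\overline X'$ of $V$ (which inherits a Kähler structure) and then resolve $V'$ to obtain $\widehat V$; I may moreover arrange, by Hironaka's theorem, that the union $E\subset\widehat V$ of the $\pi$-exceptional locus together with the strict transform of $(D\cup \mathrm{Sing}(p))\cap V$ is a simple normal crossing divisor.

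Next I would construct the covering. Set $V^\circ := V\setminus(D\cup \mathrm{Sing}(p)\cup \mathrm{Sing}(V))$, which is non-empty by the hypothesis $V\not\subset D\cup \mathrm{Sing}(p)$, is smooth, and over which the restriction of $p$ is étale. Take the normalization of a connected component of the fiber product $V^\circ\times_X\Omega$: this is a smooth immersed subvariety $W$ of $\Omega$, and by \cite{BD18} an immersed subvariety of a manifold of bounded type is itself of bounded type, hence in particular weakly Bergman. The induced étale projection $W\to V^\circ \cong \widehat V\setminus E$ provides precisely the covering data required by Theorem~\ref{thm1}.

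Third, I would read off the covering divisor. For each irreducible component $F_i\subset E$, the covering $W\to \widehat V\setminus E$ has a well-defined local monodromy around $F_i$; after possibly further blow-ups of $\widehat V$ (a toroidal reduction step) I may assume this monodromy is cyclic of some order $n_i\in\mathbb N^*\cup\{\infty\}$, and set $\Delta_{\widehat V}:=\sum_i (1-\frac{1}{n_i})F_i$. Since $\pi$ is an isomorphism over $V^\circ$, the divisor $\Delta_{\widehat V}$ is supported over $D\cup \mathrm{Sing}(p)$, as asserted. Applying Theorem~\ref{thm1} to the Kähler pair $(\widehat V, E)$ uniformized by the weakly Bergman manifold $W$ then yields that $K_{\widehat V}+\Delta_{\widehat V}$ is big.

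The main obstacle I expect is the step cyclicizing the local monodromies around each SNC component of $E$: this is essentially a toroidal modification whose combinatorial output is what intrinsically pins down the ramification indices $n_i$, and one has to check that the blow-ups needed for this do not alter the conclusion of Theorem~\ref{thm1}. A secondary, more routine issue is verifying that the normalization of the fiber product genuinely produces a smooth subvariety of $\Omega$ near the preimages of $\partial V^\circ \subset V$, so that the hypothesis of Theorem~\ref{thm1} applies verbatim.
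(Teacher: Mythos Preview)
Your approach matches the paper's: deduce the result from Theorem~\ref{thm1} by forming the fibre product with $\Omega$ to obtain an \'etale weakly Bergman cover of the open part of a K\"ahler resolution $\widehat V$, then read off the covering divisor. There is, however, one genuine gap and two non-issues.

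\textbf{The gap.} Your sentence ``an immersed subvariety of a manifold of bounded type is itself of bounded type, hence in particular weakly Bergman'' is not justified as written. Being of bounded type does not directly give you enough $L^2$ canonical forms; the relevant result from \cite{BD18} (their Lemma~1.2) requires in addition a \emph{complete K\"ahler metric}. The paper handles this explicitly: since $\widehat V$ is compact K\"ahler and $\widehat V\setminus E$ is Zariski open, it carries a complete K\"ahler metric (Poincar\'e-type, see Lemma~\ref{complete}), and this pulls back along the \'etale map $W\to \widehat V\setminus E$. You should insert this step.

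\textbf{The non-issues.} First, no ``toroidal reduction'' is needed to make the local monodromy cyclic: near a general point of a smooth component $F_i$, the complement is $\mathbb D^*\times\mathbb D^{m-1}$ with fundamental group $\mathbb Z$, so any connected \'etale cover there is automatically cyclic of some order $n_i\in\mathbb N^*\cup\{\infty\}$. The SNC arrangement you already imposed is all you need. Second, your worry about smoothness of $W$ ``near the preimages of $\partial V^\circ$'' is irrelevant: Theorem~\ref{thm1} only asks for an \'etale weakly Bergman cover of the \emph{open} part $\widehat V\setminus E$, and the extension of the Bergman metric across $E$ is handled internally by the proof of that theorem via the local model computation.
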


We will actually prove a more general variant of this result, where we do not need $\widehat{V}$ to be \emph{smooth}, but merely \emph{$\mathbb Q$-factorial} (see Theorem~\ref{newthm}). This variant will be applicable to several different contexts: for example, if $X = \quotient{\Gamma}{\Omega}$ is a compact quotient and $\Gamma$ has no fixed point in codimension $1$, it will imply that $K_X$ is \emph{big} (which is not equivalent to $X$ being of general type, see Section \ref{sectexamples} (4) and Section~\ref{examples2} (1)).

\begin{rem}\label{rem:orbmgn}
Let us revisit again the situation of Example \ref{exmgn} in light of Theorem \ref{thm2}: in principle this theorem should allow to treat the more general case with no conditions on $n$, since here nothing is required about the freeness of the action. 

Given a resolution $\widehat{\mathcal M}_{g,n}$ of $\overline{\mathcal{M}_{g,n}}$, this theorem yields the existence of a natural orbifold structure $(\widehat{\mathcal M}_{g,n}, \Delta_{\widehat{\mathcal M}_{g,n}})$, which is of log-general type. Moreover, observe that the $\mathbb Q$-divisor $\Delta_{\widehat{\mathcal M}_{g,n}}$ is intrinsically completely described by the action of $\operatorname{MCG}_{g,n}$ on $\mathcal T_{g,n}$.
\end{rem}

%\newpage

$\bullet$ \textsc{The singular, compact case.}

\noindent
In the case where $\Omega$ is a bounded domain admitting a cocompact lattice (possibly distinct from $\Gamma$), it is possible to give a refinement of Theorem~\ref{thm2}, in the setting where $V$ is not necessarily compact, but \emph{weakly pseudoconvex}. The next theorem formulates a positivity result for $K_{\widehat{V}} + \Delta_{\widehat{V}}$ in terms of the existence of a singular metric with positive curvature on this line bundle. Applying this result to $V = \mathbb C$ will later on allow us to obtain a hyperbolicity criterion for the manifold $\overline{X}$, cf. Theorem~\ref{thmsingmetric}. 

\begin{bigthm} \label{thm3}
Assume that $\Omega$ is a bounded domain admitting a cocompact lattice. Let $q: V \longrightarrow \overline{X}$ be a generically immersive holomorphic map from a \emph{weakly pseudoconvex K\"ahler manifold} $V$, such that $q(V) \not\subset D\cup  \mathrm{Sing}(p) $. 

\noindent
Then, there exists a modification $\sigma: \widehat{V} \longrightarrow V$ such that the $\mathbb Q$-divisor $K_{\widehat{V}} + \Delta_{\widehat{V}}$ admits a singular metric $g_{\widehat{V}}$ with non-negative curvature, positive definite at a general point of $\widehat{V}$, and with an explicit lower bound on this curvature in terms of the Bergman metric of $\Omega$.
\end{bigthm}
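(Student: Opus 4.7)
The plan is to imitate the strategy underlying Theorems~\ref{thm1} and~\ref{thm2}, but in the non-compact setting of a weakly pseudoconvex base, working directly with singular metrics on line bundles instead of with bigness. First I would shrink the picture so as to work over the locus where the covering map is well-behaved. Set $V^\circ := V \setminus q^{-1}(D\cup \mathrm{Sing}(p))$, which is a non-empty Zariski open subset of $V$ by the hypothesis that $q(V)\not\subset D\cup\mathrm{Sing}(p)$. Form the fibered product $\widetilde V := V^\circ \times_X \Omega$; it carries a natural étale covering $\widetilde V\to V^\circ$ together with a holomorphic map $f:\widetilde V\to\Omega$ which is generically immersive (because $q$ is) and which is $\Gamma$-equivariant in the obvious sense. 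After normalizing and resolving the components of $\widetilde V$ if needed, this realizes $\widetilde V$ as a generic immersion into a bounded domain, hence as a \emph{weakly Bergman} manifold in the sense of \cite{BD18}.

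The second, central analytic step is to put on $K_{\widetilde V}$ a Bergman-type metric with an explicit curvature lower bound. The cocompact lattice assumption on $\Omega$ is crucial here: it forces the Bergman metric $h_{\Berg,\Omega}$ of $K_\Omega$ to have curvature bounded below, up to a uniformly bounded factor, by a fixed smooth Kähler form $\omega_\Omega$ on $\Omega$. Running through $f$ the submanifold Bergman construction of \cite{BD18} (\emph{i.e.}, restricting $(d,0)$-forms that are $L^2$ on $\Omega$, where $d = \dim \widetilde V$) produces a singular hermitian metric on $K_{\widetilde V}$ whose curvature dominates $f^*\omega_\Omega$; in particular it is non-negative everywhere, strictly positive at a generic point, and comes with a pointwise lower bound explicit in terms of the Bergman metric of $\Omega$.

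Third, I have to descend this metric from $\widetilde V$ to a suitable model $\widehat V$ of $V$ and collect, in doing so, the covering divisor $\Delta_{\widehat V}$. The covering $\widetilde V\to V^\circ$ extends over $V\setminus V^\circ$ as a ramified orbifold cover whose ramification multiplicities are read off from the stabilizers of $\Gamma$ along the components of $D\cup\mathrm{Sing}(p)$. I would choose a modification $\sigma:\widehat V\to V$ so that $\sigma^{-1}(D\cup\mathrm{Sing}(p))$ is an snc divisor separating the various ramification components, and then push forward the previous metric. The tautological formula relating $K_{\widetilde V}$ and the pullback of $K_{\widehat V}+\Delta_{\widehat V}$ along the ramified cover turns the Bergman metric on $K_{\widetilde V}$ into a singular hermitian metric on the $\mathbb Q$-line bundle $K_{\widehat V}+\Delta_{\widehat V}$, whose curvature retains the lower bound provided by $f^*\omega_\Omega$ on the Zariski open set $\sigma^{-1}(V^\circ)$.

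The main obstacle I expect lies in the last paragraph, namely in extending the metric across $\sigma^{-1}(D\cup \mathrm{Sing}(p))$ without destroying the explicit bound in terms of the Bergman metric of $\Omega$, and in verifying that the weights picked up by the pushforward are exactly the $(1-1/n_i)$ prescribed by $\Delta_{\widehat V}$. This is where the weak pseudoconvexity of $V$ enters: it allows one to use $L^2$-extension arguments à la Ohsawa--Takegoshi/Demailly globally on $\widehat V$ to guarantee that $L^2$-sections of $K_{\widetilde V}$ do extend across the ramification locus, and hence that the pushed-down metric is well-defined as a genuine singular metric on the $\mathbb Q$-line bundle $K_{\widehat V}+\Delta_{\widehat V}$ with the claimed curvature lower bound.
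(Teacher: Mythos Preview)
Your outline follows the same architecture as the paper's proof (Theorem~\ref{thmsingmetric}): pass to the Galois cover $\widetilde V$ (or rather its normalization $Z$), equip it with a Bergman-type metric carrying an explicit curvature lower bound, then descend to $K_{\widehat V}+\Delta_{\widehat V}$. However, two of your attributions are off, and the second one is a genuine gap.

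First, the metric on $K_{Z_{\rm reg}}$ is not obtained by \emph{restricting} $L^2$ $(d,0)$-forms from $\Omega$: there is no such restriction map, since $\widetilde V\to\Omega$ is only generically immersive, not a closed embedding. What is used is the \emph{intrinsic} Bergman metric $h_{Z_{\rm reg}}$, and the explicit lower bound $i\Theta(h_{Z_{\rm reg}})\ge C_0\,j^\ast i\Theta(h_\Omega)$ is obtained by a direct comparison argument (Proposition~\ref{lemcontcurvfin}): one fits a bounded symmetric domain $\mathcal D$ (e.g.\ a ball) inside $\Omega$ centred at a point of a fixed compact fundamental domain for the cocompact lattice, and uses linear functions on $\mathcal D$ to manufacture explicit elements of $\mathcal H_{Z_{\rm reg}}$ by multiplying a peak section. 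The cocompact lattice is precisely what makes the resulting constant $C_0$ uniform.

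Second --- and this is the substantive gap --- weak pseudoconvexity of $V$ is \emph{not} used to extend the metric across $\sigma^{-1}(D\cup\mathrm{Sing}(p))$. That extension is automatic, exactly as in the compact case (Step~2 of Theorem~\ref{logbig}): one bounds the local weight from above by comparing with the Bergman metric of a local model cover of $\Delta^\ast\times\Delta^{m-1}$, and this is what forces the coefficients $(1-\tfrac1{n_i})$ in $\Delta_{\widehat V}$. Where weak pseudoconvexity is actually needed is \emph{earlier}, to show that the intrinsic Bergman metric on the cover is defined at all at a generic point, i.e.\ that a resolution $\widehat Z$ of $Z$ is \emph{weakly Bergman} (Proposition~\ref{weakbergman}). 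Since $\widehat Z$ is neither compact nor a priori complete K\"ahler, one cannot invoke \cite{BD18} directly. The paper instead exhausts $V$ by sublevel sets $V_i$ of the psh exhaustion, constructs K\"ahler metrics $\omega_i$ on the corresponding preimages $\widehat Z_i$ converging to a degenerate pullback metric (Lemma~\ref{lemkahler}), and runs the H\"ormander--Demailly $\overline\partial$-$L^2$ method on each $\widehat Z_i$ to solve $\overline\partial f_i=\overline\partial(\chi\tau)$ with uniform $L^2$ bounds; a diagonal extraction then yields a global $L^2$ section of $K_{\widehat Z}$ with prescribed $1$-jet. So the $L^2$ technique in play is $\overline\partial$-estimates on the cover to \emph{produce} sections, not Ohsawa--Takegoshi extension on the base.
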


\begin{ex}\label{ex:submfdmgn}
Getting back to the case of Example \ref{exmgn}, but still in the general situation of arbitrary $n$, we see that Theorem \ref{thm3} yields the following. 

Let $V$ be a weakly pseudoconvex K\"ahler manifold supporting a family of curves of genus $g$ with $n$ marked points, with no non-trivial automorphisms. Assume that this family has maximal variation. Then, we have a canonical generically immersive holomorphic map $f\colon V \longrightarrow {\mathcal M}_{g,n}^{\mathrm{reg}}$, and this ensures, by Theorem \ref{thm3}, that $V$ is of log-general type. This can be seen as a very special case of Viehweg's hyperbolicity conjecture.
\end{ex}

Applied to $V = \mathbb C$, this refinement can be used to provide a hyperbolicity result in the case where $X = \overline{X}$ is compact. We obtain the following partial generalization to the non-symmetric case of a previous work of the first author with Rousseau and Taji \cite{crt17}, cf. Theorem~\ref{thmsingmetric2}.  

\begin{bigthm} \label{thm4}
Assume that $\Omega$ is a bounded domain admitting a cocompact lattice. Then there exists a constant $\alpha_0$, depending only on $\Omega$, such that the following holds. 

\noindent
Let $X = \quotient{\Gamma}{\Omega}$ be a compact quotient and let $\widehat{X} \overset{\pi}{\longrightarrow} X$ be a projective resolution of singularities. If for some $\alpha>\alpha_0$, the $\mathbb Q$-divisor
$$
L_\alpha:= \pi^\ast (K_{X}+\Delta_X) - \alpha \Delta_{\widehat X} 
$$
is effective, where $\Delta_X$ is the covering divisor associated to $p:\Omega\to X$, then
\begin{enumerate}
\item any subvariety $W \subseteq X$ such that $W \not\subset \pi (\mathbb B(L_\alpha)) \cup \mathrm{Sing}(p)$ is of general type.
\item any entire curve $f: \mathbb C \longrightarrow X$ has its image included in $\pi(\mathbb B(L_\alpha)) \cup \mathrm{Sing}(p)$.
\end{enumerate}
\end{bigthm}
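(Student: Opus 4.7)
The plan is to treat both parts of the theorem via a single application of Theorem~\ref{thm3} (part (1) with $V$ a resolution of $W$, part (2) with $V=\mathbb C$), and then use the effectivity of $L_\alpha$ to convert the curvature estimate of $K_{\widehat V}+\Delta_{\widehat V}$ into a bigness/hyperbolicity statement for $K_{\widehat V}$ alone.

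First I would unwind the constant $\alpha_0$. The hypothesis that $\Omega$ admits a cocompact lattice lets one realise the Bergman form $\omega_B$ on $\Omega$ as the curvature of a singular metric on $K_X+\Delta_X$ which, by Mok--Yau, is quasi-isometric to the K\"ahler--Einstein metric of the compact ball quotient; in particular there is a purely intrinsic constant $\alpha_0 = \alpha_0(\Omega)$ (the defect coming from the explicit lower bound in Theorem~\ref{thm3}) such that the metric $g_{\widehat V}$ produced by that theorem satisfies
\[
i\,\Theta_{g_{\widehat V}}(K_{\widehat V}+\Delta_{\widehat V}) \;\geq\; \tfrac{1}{\alpha_0}\,\widetilde q^{\,*}\pi^{\,*}(K_X+\Delta_X)
\]
as positive currents, where $\widetilde q: \widehat V \to \widehat X$ lifts the given map $q:V\to \overline X$ through $\pi$. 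The assumption $W \not\subset \mathrm{Sing}(p)$ (resp.\ $f(\mathbb C)\not\subset \mathrm{Sing}(p)$) ensures that the hypotheses of Theorem~\ref{thm3} apply; in the compact case $D=\emptyset$, so $q(V)\not\subset D$ is automatic.

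Next, given $\alpha>\alpha_0$ and a section realising the effectivity of $L_\alpha$, the defining identity $\pi^{\,*}(K_X+\Delta_X)=L_\alpha+\alpha\,\Delta_{\widehat X}$ together with the functoriality inequality $\widetilde q^{\,*}\Delta_{\widehat X}\geq \Delta_{\widehat V}$ (which expresses that ramification multiplicities of $q$ along $\Delta_{\widehat X}$ can only increase the orbifold structure downstairs) yields, after subtracting $\Delta_{\widehat V}$,
\[
K_{\widehat V} \;\geq\; \tfrac{1}{\alpha_0}\,\widetilde q^{\,*} L_\alpha + \Bigl(\tfrac{\alpha}{\alpha_0}-1\Bigr)\Delta_{\widehat V}
\]
as a $\mathbb Q$-current. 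Since $\alpha>\alpha_0$ and $\Delta_{\widehat V}$ is effective, the right-hand side is a sum of closed positive currents; it is \emph{strictly} positive on a Zariski-dense open subset precisely when $\widetilde q^{\,*} L_\alpha$ is, that is, when the image $q(V)$ is not contained in $\pi(\mathbb B(L_\alpha))$.

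For (1), applied to $V=\widehat W$ compact K\"ahler, the above makes $K_{\widehat W}$ big (by Boucksom's singular Nakai--Moishezon criterion), so $W$ is of general type. For (2), applied to $V=\mathbb C$, one gets a closed positive $(1,1)$-current on a modification $\widehat V\to\mathbb C$ in the class of $K_{\widehat V}$, which is trivial up to a discrete divisor: an Ahlfors--Schwarz/Brody integration on a disc of radius $R\to\infty$ forces $\widetilde q^{\,*} L_\alpha$ to vanish identically, i.e.\ $f(\mathbb C)\subset \pi(\mathbb B(L_\alpha))\cup\mathrm{Sing}(p)$. The main obstacle is extracting the \emph{uniform} constant $\alpha_0$ from the proof of Theorem~\ref{thm3}: one has to inspect the explicit lower bound given there and check that it depends only on $\Omega$ (through the comparison with its Bergman metric), and not on the choice of quotient $X$, resolution $\widehat X$ or subvariety $V$; a secondary but delicate point is the orbifold-multiplicity inequality $\widetilde q^{\,*}\Delta_{\widehat X}\geq \Delta_{\widehat V}$ at ramification points, which has to be verified directly from the fibre-product definition of the covering divisor.
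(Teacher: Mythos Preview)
Your outline matches the paper's proof closely: the constant is $\alpha_0 = 1/C_0$ with $C_0$ the explicit constant of Theorem~\ref{thm3} (Theorem~\ref{thmsingmetric}), the comparison $\Delta_{\widehat V}\le r^*\Delta_{\widehat X}$ is exactly Proposition~\ref{propeffective}, and a section of $mL_\alpha$ not vanishing on $W$ is used to twist the metric $g_{\widehat V}$ into a positively curved singular metric on $K_{\widehat V}$, whence bigness by Boucksom for~(1) and Ahlfors--Schwarz for~(2). Two points deserve correction.

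First, drop the appeal to Mok--Yau and K\"ahler--Einstein metrics: nothing of the sort is used. The uniformity of $\alpha_0$ in $\Omega$ comes directly from Proposition~\ref{lemcontcurvfin}, which bounds $i\Theta(h_Y)\ge C_0\,q^*(i\Theta(h_\Omega))$ for \emph{any} generically immersive $Y\to\Omega$, with $C_0$ depending only on a fundamental domain for the cocompact lattice. This is precisely what makes $\alpha_0$ independent of $\Gamma$, $\widehat X$, and $V$.

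Second, your argument for~(2) has a genuine gap. Producing a closed positive current in the class of $K_{\widehat V}\cong\mathcal O_{\mathbb C}$ is not a contradiction: $\mathbb C$ carries plenty of psh functions. What Ahlfors--Schwarz actually needs is an inequality of the form $i\Theta(h)\ge C\,h^{-1}$ for the constructed metric $h$ on $K_{\mathbb C}$. The paper obtains this by observing that the cover $\widetilde V\to\widehat V$ has universal cover the disk, so the Bergman metric satisfies $i\Theta(g_{\widehat V})=g_{\widehat V}^{-1}$ pointwise on $V^\circ$; since $h^{-1}$ differs from $g_{\widehat V}^{-1}$ by the globally bounded factor $(r^*F)^{\beta/m}$, one gets $i\Theta(h)\ge\delta\,i\Theta(g_{\widehat V})=\delta\,g_{\widehat V}^{-1}\ge C\,h^{-1}$ on $V^\circ$, and then extends across $\widehat V\setminus V^\circ$ by the boundedness of the weight (Claim~\ref{claim1}(i)). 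Your ``Brody integration on discs of radius $R\to\infty$'' does not substitute for this step; you must explicitly link the curvature lower bound back to the metric itself via the one-dimensional identity $i\Theta(g_{\widehat V})=g_{\widehat V}^{-1}$.
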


\noindent
Here, $\mathbb B(L_{\alpha}) = \bigcap_{p} \mathrm{Bs}(L_{\alpha}^{\otimes p})$ denotes the stable base locus of $L_{\alpha}$ and the intersection is taken over all positive integers $p$ divisible enough so that $L^{\otimes p}$ is a genuine line bundle. Note that Lemma~\ref{projective-resolution} guarantees the existence of projective resolutions for $X$ provided that it fulfils the assumptions of Theorem~\ref{thm4}. 

\subsection{Further comparison to previous results}
$\mbox{}$

\smallskip

$\bullet$ As already explained, the techniques of the papers are inspired by \cite{BD18} where it is proved that any subvariety $V\subset X$ of a compact \textit{étale} quotient $X=\quotient{\Gamma}{\Omega}$ of a manifold of bounded type is of general type. One of their key observations is that if $\widehat V\to V$ is a resolution of singularities, then one can construct a natural \textit{étale}, Galois cover $Z\to \widehat V$ where $Z$ is a \emph{Bergman manifold} i.e. the Bergman kernel on $K_Z$ is well-defined and has strictly positive curvature on a non-empty open set of $Z$; this kernel descends to define a metric with the same properties on $K_{\widehat V}$, from which the bigness of $K_{\widehat V}$ follows.  

\noindent
When the action of $\Gamma$ is not assumed to be free anymore, one can still get a Galois cover $Z\to \widehat V$ where the Bergman metric on $K_Z$ has similar positivity properties as before, but that metric will \textit{not} descend to a metric on $K_{\widehat V}$ anymore but rather on an adjoint bundle $K_{\widehat V}+\Delta_{\widehat V}$ for some suitable boundary divisor $\Delta_{\widehat V}$.
%and if $\widetilde V$ is the inverse image of $V$ by the map $\Omega \to X$, then $\widehat V\times_V \widetilde V$ \\
\smallskip 

$\bullet$ In the case where $\Omega$ is a bounded \emph{symmetric} domain, a great variety of viewpoints have been recently used to investigate the hyperbolicity properties of these quotients $X = \quotient{\Gamma}{\Omega}$. They can be studied by means of Hodge theory \cite{bru16, bru16a}, Monge-Ampère equations and negative holomorphic sectional curvature \cite{wy15,gue18,DT19}, or other metric methods \cite{rou15, cad16, crt17, cad18}. Unfortunately, all these techniques rely to some extent on the precise curvature properties of the Bergman metric on a bounded symmetric domain, which totally break down if the domain is not symmetric. To our knowledge, the best thing that can be said for a general bounded domain is that the holomorphic sectional curvature of its Bergman metric is bounded above by 2 \cite{Kob59} (but is has no sign in general). It would be anyway interesting to understand if the greater symmetry of bounded domains admitting a cocompact lattice might allow one to infer something more precise about the holomorphic sectional curvature of the Bergman metric (see Section \ref{criterion} where such a symmetry is exploited to obtain information on its Ricci curvature).

%However, and this is the main idea of \cite{BD18}, it is still possible in general to exploit the positive curvature properties of the \emph{Bergman kernel} of the subvarieties of $\Omega$ to obtain hyperbolicity properties in a very simple manner. 

\subsection{Outline of the proof}
$\mbox{}$

Let us briefly describe the idea of the proof of Theorem~\ref{thm1}. Suppose that $\overline{X}$ is a compact K\"ahler manifold, and that $X = \overline{X} \setminus D$ is a Zariski open subset admitting an étale cover biholomorphic to a manifold $\Omega$ which is \emph{weakly Bergman}, \textsl{i.e.} on which the Bergman metric is defined at a generic point. We wish to find a $\mathbb Q$-divisor $\Delta_{\overline{X}}$ supported on $D$, such that $K_{\overline{X}} + \Delta_{\overline{X}}$ is big. The main idea is similar to the metric techniques employed in \cite{crt17}: we first construct a smooth metric $g$ on $K_{X}$ with positive definite curvature. Then, we control the divergence of the metric $g$ on the boundary $D$, to show it extends as a singular metric with positive curvature on $K_{\overline{X}} + \Delta_{\overline{X}}$, for some suitable $\mathbb Q$-divisor $\Delta_{\overline{X}}$ supported on $D$. The conclusion then comes from a criterion of bigness due to Boucksom \cite{bou02}.

In this situation, the metric $g$ will come directly from the Bergman kernel on $\Omega$, which descends to $X$ to define a positively curved, singular metric $g$ on $K_{X}$, with positive definite curvature at a generic point.

Finally, we have to control the divergence of $g$ near the boundary $D$. To understand this divergence, we will use a geometric construction which is very convenient to determine the adequate orbifold multiplicities to put on the components of $D$, and which was applied by several authors to extend algebraic orbifold objects on resolutions of quotient singularities (see Tai \cite{Tai1982}, Weissauer \cite{weissauer86}). \medskip

In the case where $V \subset \overline{X}$ is a subvariety of a compactification $\overline{X} = X \cup D$ of a quotient of a bounded domain $X = \quotient{\Gamma}{\Omega}$, we proceed with the same arguments, essentially replacing $X$ by a resolution of singularities $\widehat{V} \longrightarrow V$ of $V$, and $\Omega$ by $\widehat{V} \times_X \Omega$. One technical part of the proof of Theorem~\ref{thm3} is to bound from below the curvature of the Bergman metric on the open part $\widehat{V} \setminus \mathrm{Supp}( \Delta_{\widehat{V}})$: if we assume that $\Omega$ is a bounded domain acted upon by a cocompact lattice, it is possible to use general comparison results between the Carathéodory and Bergman metrics, due to Hahn \cite{hah78}. Our general method will follow closely the $L^2$ technique employed in \cite{BD18}, which was in turn inspired by \cite{CZ02}; however, it will be slightly more elaborated since we want to be able to deal with the case where $V$ is no more compact, and thus non-necessarily complete K\"ahler (see Theorem~\ref{thmsingmetric}). 

In this situation, the orbifold multiplicities to put on $\Delta_{\widehat{V}}$ give a slightly refined version of what was done in \cite{crt17}, where this technique of construction of an orbifold pair $(\widehat{V}, \Delta_{\widehat{V}})$ was also used to extend singular metrics as well as orbifold symmetric differentials. Our explicit description will allow us to compare the divisors $\Delta_{\widehat{V}}$ and $\Delta_{\widehat{X}}$ appearing in this setting, cf. Proposition~\ref{propeffective}. These comparison result will be of particular importance to prove the hyperbolicity criterion of Theorem~\ref{thm4}.

\medskip

\noindent
\begin{comment}
\textit{About orbifolds.} A word about the notion of orbifolds that will be used in the text. These objects can be defined in essentially two ways, one coming from the theory of \emph{stacks} (see \textsl{e.g.} Definition \ref{defiorbifold}), and the other one from Campana's work (see \cite{cam04}). The distinction between these two categories comes essentially from the definition of morphisms between these objects, and in our text, this difference will not matter much. We will only use the stacky version in Section \ref{firstbig}, to show that the main result of \cite{BD18} can be readily adapted to the singular case to yield a first orbifold hyperbolicity result (see Proposition \ref{orbi}). In order to study multiplicities along the boundary and prove Theorem 2, we proceed as in \cite{crt17} and use Campana's notion which has the advantage of including the logarithmic case in a natural way.
\end{comment}

\subsection{Organization of the paper}
$\mbox{}$

$\bullet$ \S\!~\ref{sectcriterion}. We give a definition of the \emph{covering divisors} which will be used throughout the text. After recalling some useful information concerning the Bergman metric, and the existence of projective resolutions, we prove Theorem~\ref{thm1}. 

$\bullet$ \S\!~\ref{sectapplication}. We apply Theorem~\ref{thm1} to the case of subvarieties of quotients of bounded domains. Theorem~\ref{thm2} appears as a particular case of Theorem~\ref{newthm}, which is the main result of this section.

$\bullet$ \S\!~\ref{orbifold}. Let $V \subset X = \quotient{\Gamma}{\Omega}$, and let $\widehat{X} \longrightarrow X$ and $\widehat{V} \longrightarrow V$ be adequate log resolutions. The main result of this section is the comparison result between $\Delta_{\widehat{V}}$ and $\Delta_{\widehat{X}}$ given by Proposition~\ref{propeffective}. 

$\bullet$ \S\!~\ref{criterion}. In the case where $\Omega$ is a bounded domain, we give a uniform bound from below for the curvature of the Bergman metric of subvarieties of $\Omega$, cf. Proposition~\ref{lemcontcurvfin}. We apply this estimate to derive a lower bound of a natural singular metric with positive curvature on $K_{\widehat V}+\Delta_{\widehat V}$ in a very general setting, cf. Theorem \ref{thmsingmetric}. Finally, we go back to the compact case and spell out a criterion for $X$ to satisfy the Green--Griffiths--Lang conjecture, cf. Theorem \ref{thm4}. 

\subsection*{Acknowledgements} The authors would like to thank S\'ebastien Boucksom, Junyan Cao, Gabriele Mondello, Erwan Rousseau, Behrouz Taji, Stefano Trapani and Shengyuan Zhao for enlightening discussions about this paper. 

B.C. is partially supported by the ANR Programme: D\'efi de tous les savoirs (DS10) 2015, \lq\lq GRACK\rq\rq{}, Project ID: ANR-15-CE40-0003. 

S.D. is partially supported by the ANR Programme: D\'efi de tous les savoirs (DS10) 2015, \lq\lq GRACK\rq\rq{}, Project ID: ANR-15-CE40-0003ANR and by the ANR Programme: D\'efi de tous les savoirs (DS10) 2016, \lq\lq FOLIAGE\rq\rq{}, Project ID: ANR-16-CE40-0008. He is also partially supported by the \lq\lq Gruppo Nazionale per le Strutture Algebriche, Geometriche e le loro Applicazioni\rq\rq{} of the Istituto Nazionale di Alta Matematica \lq\lq Francesco Severi\rq\rq{} as well as the \lq\lq SEED PNR\rq\rq{} project of SAPIENZA Università di Roma

H.G. is partially supported by the National Science Foundation through the NSF Grant DMS-1510214.

\section{A criterion for a pair to be of log general type} \label{sectcriterion}

\subsection{Covering multiplicities} 
\label{subcovering}

Let us begin the present section with a definition which, although perhaps not completely standard, is well adapted to our purposes. 

\begin{defi}[Covers]
Let $X,Y$ be two irreducible and reduced complex spaces of the same dimension and let $p:X\to Y$ be a surjective holomorphic map.

$\bullet$ One says that $p$ is a \emph{cover} if there exists a discrete subgroup $\Gamma \subset \mathrm{Aut}(X)$ acting properly and discontinuously on $X$ such that $p$ is isomorphic to the quotient map $X\to \quotient{\Gamma}{X}$. 

$\bullet$ The \emph{singular locus} of $p$, $\mathrm{Sing}(p)$, is defined to be the locus of singular values of $p$, i.e. the locus of points $y\in Y$ such that there exists $x\in p^{-1}(y)$ such that $p$ is not a local biholomorphism around $x$.  

$\bullet$ If $p$ is étale, or equivalently if $\mathrm{Sing}(p)=\emptyset$, one says that $Y$ is \emph{uniformized}\footnote{Note that $X$ is not supposed to be simply connected here: we use the word \lq\lq uniformized\rq\rq{} merely to stress the fact that the cover is étale.} by $X$. 
\end{defi}

 Let $\overline X$ be a $n$-dimensional connected normal complex space and let $X\subseteq \overline X$ be some non-empty analytic Zariski open subset of $\overline X$ endowed with an étale cover $p:\widetilde X\to X$. We denote by $D=\sum_{i\in I} D_i$ the union of the codimension one irreducible components of $\overline X \setminus X$.
 
 \noindent
 Given a general point $x_i\in D_i$, one can choose an Euclidean neighborhood $\overline U_i$ of $x_i$ in $\overline X$ such that $U_i:=\overline U_i\setminus D_i \simeq \mathbb D^*\times \mathbb D^{n-1}$. One denotes by $\widetilde U_i$ a connected component of $p^{-1}(U_i)$. 

\begin{defi}[Covering divisor] \label{defcovering}
Let $(\overline X, X,p)$ as above. The \emph{covering multiplicity} $m_i=m_i(p) \in \mathbb N^*\cup \{\infty\}$ of the divisor $D_i$ is defined to be the degree of the cover $p|_{\widetilde U_i}:{\widetilde U_i} \to U_i$ above. One then defines the \emph{covering divisor} $\Delta_{\overline X}=\Delta_{\overline X}(p):=\sum_{i\in I} (1-\frac 1{m_i(p)} )D_i$.  

If no ambiguity is possible, we will just write $m_i$ and $\Delta_{\overline X}$. 
\end{defi}

\begin{rem}
The number $m_i$ above is independent of the choice of the general point $x_i\in D_i$ and the neighborhood $\overline U_i$. Moreover, it is also clearly independent of the choice of the connected component of $p^{-1}(U_i)$, since the deck transformation group acts transitively on the various components (this is because by our Definition 2.1, we always assume a cover to be Galois unless otherwise specified).
\end{rem}

\subsection{The Bergman metric} 
\label{subsectbergmanmetric}
Let $X$ be a $n$-dimensional connected complex manifold and let $\mathcal H_X$ be the Hilbert space of holomorphic sections $\sigma \in H^0(X, K_X)$ with finite $L^2$ norm
$$
\norm{\sigma}^2:= \int_{X} i^{n^2} \sigma \wedge \overline{\sigma}.
$$

Assuming that $\mathcal H_X \neq \{0\}$, one can define a singular metric $h_X$ on $K_X$ as follows. Choose a Hilbertian basis $(e_i)_i$ for $\mathcal H_X$ and let $e$ be a local holomorphic frame for $K_{X}$. For each $i$, we have $e_i = s_i e$, for some local holomorphic function $s_i$. Then, we can define
$$
\norm{e}^2_{h_{X}} = \frac{1}{\sum_i \abs{s_i}^2}.
$$

\noindent
It is easy to check that the Chern curvature current $i\Theta(h_{X})$ is a closed, positive $(1,1)$-current. Moreover, the metric $h_{X}$ is clearly invariant under the action of $\mathrm{Aut}(X)$ on $K_{X}$.

\begin{defi}[Bergman metric]
Let $X$ be a complex manifold such that  $\mathcal H_X \neq \{0\}$. 

The singular Hermitian metric $h_X$ on $K_X$ defined above is called the \textit{Bergman metric} on $X$. Its Chern curvature form $i\Theta(h_X)$ is a positive current. 
\end{defi}

\begin{rem}
Please notice that the terminology introduced above is not completely standard: usually the term Bergman metric is used for the Chern curvature form $i\Theta(h_X)$ (whenever defined), and what we call here Bergman metric is instead (a manifestation of, using the correspondence between volume forms and metrics on the canonical bundle) the Bergman kernel. 

Nevertheless, in the highly non-smooth situations we consider here, what is usually referred to as Bergman metric will be in general merely a current. Such current is very far from being a genuine smooth metric on the tangent bundle, so that we prefer to reserve the term \emph{metric} for its incarnation as a singular metric on the canonical bundle.
\end{rem}

The Bergman metric $h_X$, as well as its curvature current $i\Theta(h_X)$, are smooth on the analytic Zariski open subset of $X$ corresponding to the complement of the base locus of the linear system of $L^2$ integrable sections of $K_X$.

\begin{defi} [(weakly) Bergman manifolds] We introduce the following notions. 
\begin{enumerate}
\item A complex manifold $X$ is called \textit{weakly Bergman} if its Bergman metric $h_X$ is well-defined and if the curvature current $i\Theta(h_{X})$ is smooth and positive definite on some Euclidean open subset $\emptyset \neq U\subset X$. 
\item A \textit{Bergman manifold} is a weakly Bergman manifold $X$ where the open set $U$ above can be taken to be the whole $X$. 
\item A reduced, irreducible complex space is called weakly Bergman if $X_{\rm reg}$ is a weakly Bergman manifold. Equivalently, one (or any) resolution of $X$ is weakly Bergman.
\end{enumerate}
\end{defi}

\begin{ex} The following examples are Bergman manifolds: bounded domains in $\mathbb C^n$ and their submanifolds, bounded domains in Stein manifolds and their submanifolds (or, more generally, manifolds of bounded type, cf. \cite{BD18}), projective manifolds with very ample canonical bundle.   
\end{ex}

\begin{rem}
\label{bimero}
Let $X, Y$ be two irreducible complex spaces and let $f:X\to Y$ be a bimeromorphic map. Then $X$ is a weakly Bergman space if and only if $Y$ is a weakly Bergman space. Indeed, $f$ induces an isometry $f^*:\mathcal H_{Y_{\rm reg}} \to \mathcal H_{X_{\rm reg}}$ because any $L^2$ holomorphic $n$-form defined on an analytic Zariski open subset of a complex manifold extends automatically to the whole manifold.
\end{rem}

\begin{rem}
One can show easily that the condition  \lq\lq$i\Theta(h_{X})$ is smooth and positive definite on some open set\rq\rq{} is equivalent to the fact that $\mathcal H_X$ generates the $1$-jets at a generic point of $X$. This implies that any finite (possibly ramified) cover of a Bergman manifold is again a Bergman manifold.
\end{rem}

\subsection{Existence of projective resolutions} \label{subsectres}

The aim of this section is to prove the following technical yet useful result. 

 \begin{lem}
 \label{projective-resolution}
 Let $M$ be a Bergman manifold. Assume that there exists a discrete subgroup $\Gamma \subset \mathrm{Aut}(M)$, acting properly discontinuously and let $X := \quotient{\Gamma}{M}.$  

If $X$ is compact, then it admits a projective resolution. More generally, any compact complex space $V$ admitting a generically immersive map to $X$, whose image is not entirely contained in the singular locus of $X$, admits a projective resolution. 
 \end{lem}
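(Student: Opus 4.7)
The plan is to reduce both statements to the assertion that $X$, resp.\ $V$, is \emph{Moishezon}, and then invoke the classical fact that every compact Moishezon complex space admits a projective resolution of singularities (i.e.\ a proper bimeromorphic map from a smooth projective variety).

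For the first statement, the Bergman kernel of $M$ is the key input. Since $\Gamma$ acts properly discontinuously on the manifold $M$, every stabilizer is finite, so $X$ has only quotient singularities and is $\mathbb{Q}$-Gorenstein; fix $m \in \mathbb{N}^{\ast}$ such that $mK_X$ is a genuine line bundle. Because $M$ is a Bergman manifold (and not merely weakly so), the Bergman metric $h_M$ on $K_M$ is smooth and $i\Theta(h_M)$ is strictly positive on all of $M$. As $h_M$ is $\mathrm{Aut}(M)$-invariant, its $m$-th tensor power descends to a smooth positively curved hermitian metric on $mK_X|_{X^{\mathrm{reg}}}$ and extends across $\mathrm{Sing}(X)$ as a singular metric with positive curvature current, the descent being a purely local question that can be checked in an orbifold chart $\widetilde U / G$ near a fixed point. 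Pulling back by any resolution $\pi\colon \widehat X \to X$, $\pi^{\ast}(mK_X)$ inherits a singular positively curved metric which remains smooth and strictly positive definite on the non-empty Zariski open set $\pi^{-1}(X^{\mathrm{reg}}) \setminus \mathrm{Exc}(\pi)$. By Boucksom's criterion \cite{bou02}, $\pi^{\ast}(mK_X)$ is then big; so $\widehat X$ is Moishezon, and so is $X$. A further bimeromorphic modification supplied by Moishezon's theorem then yields a projective resolution.

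For the second statement, the proper map induced by $f$ admits a Stein factorization
$$ V \xrightarrow{\ \mu\ } V' \xrightarrow{\ \nu\ } f(V), $$
with $\mu$ surjective with connected fibers and $\nu$ finite. Since $f$ is generically immersive with image not contained in $\mathrm{Sing}(X)$, its generic fiber is zero-dimensional, and so is that of $\mu$; together with connectedness of the fibers of $\mu$, this forces $\mu$ to be bimeromorphic. Now $f(V)$ is a compact complex subspace of the Moishezon space $X$; its strict transform in the projective resolution of $X$ provided by part one is a projective subvariety bimeromorphic to $f(V)$, so $f(V)$ is itself Moishezon. The finite cover $V'$ is then Moishezon too (the transcendence degree of the meromorphic function field being preserved under finite morphisms), hence so is $V$, being bimeromorphic to $V'$. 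A final application of Moishezon's theorem produces a projective resolution of $V$.

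The delicate point is the positivity descent in part one. One must check, in a local orbifold chart $\widetilde U / G$ with $G$ finite acting on $\widetilde U \subset \mathbb{C}^n$, that the $G$-invariant metric $h_M^{\otimes m}$ on $mK_{\widetilde U}$ gives rise to a genuine singular hermitian metric on $mK_X$ with positive curvature current, and that its pullback by $\pi$ retains strict positivity on a non-empty open subset of $\widehat X$. The computation uses a $G$-invariant local frame for $mK_{\widetilde U}$ obtained by symmetrization of $(dz_1 \wedge \cdots \wedge dz_n)^{\otimes m}$, and the asserted positivity follows since the curvature is already quasi-plurisubharmonic upstairs.
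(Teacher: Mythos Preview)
Your Step~2 (reducing the case of $V$ to that of $X$ via Stein factorization, strict transforms, and stability of Moishezon-ness under finite maps and bimeromorphic modifications) is correct and parallels the paper's own Step~2 closely.

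In Step~1, however, there is a genuine gap. First a minor point: since $K_M = p^*(K_X + \Delta)$ with $\Delta$ the branch divisor (not $K_M = p^*K_X$), the $\Gamma$-invariant Bergman metric descends to a metric on $K_X + \Delta$ rather than on $K_X$; this is harmless for your strategy. The real issue is your appeal to Boucksom's criterion \cite{bou02} on the resolution $\widehat X$: that criterion is stated for compact \emph{K\"ahler} manifolds, and at this stage you have not shown that $\widehat X$ is K\"ahler. Moreover, the pulled-back current $\pi^*\theta_X$ degenerates along the exceptional locus of $\pi$, so it is not a K\"ahler current on $\widehat X$, and the K\"ahler-free Moishezon criteria of Ji--Shiffman or Bonavero do not directly apply either. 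Producing a resolution $\widehat X$ that is K\"ahler (indeed projective) is precisely the content of the lemma, so your argument is circular as written.

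The paper confronts this head-on. It first shows, via a fundamental-domain argument using compactness of $X$, that the descended curvature $\theta_X$ is a K\"ahler current on $X$ with \emph{bounded local potentials}. Taking $\pi\colon \widehat X \to X$ to be a composition of blow-ups along smooth centers, one then finds (cf.\ \cite[Lemma~3.5]{DP}) a smooth form $\beta \in c_1(E)$, with $E$ an effective exceptional $\mathbb Q$-divisor, such that $\pi^*\theta_X - \beta$ is a K\"ahler current on $\widehat X$. Because its Lelong numbers vanish, Demailly's regularization theorem yields a genuine K\"ahler form in the \emph{rational} class $\pi^*c_1(K_X + \Delta) - c_1(E)$, and Kodaira's embedding theorem then gives projectivity of $\widehat X$ directly, bypassing the Moishezon route altogether.
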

 
 \begin{proof}
 We proceed in two steps. \\
 
 \noindent
 \textbf{Step 1. Case of $X$.}
 
 \noindent
 We denote by $p:M\to X$ the quotient map. The complex space $X$ is normal with quotient singularities; in particular, it is $\mathbb Q$-factorial. Moreover, there exists an effective  $\mathbb Q$-divisor $\Delta$ supported on the branch locus of $p$ such that $K_M=p^*(K_X+\Delta)$. The Bergman metric $h_M$ descends to $X$ and induces a (singular) Hermitian metric $g_X$ on $K_X+\Delta$ with positive curvature current.

Let  $\theta_{M}$ (resp. $\theta_X$) be the curvature form $i\Theta(h_{M})$ (resp. $i\Theta(g_{X})$) of $(K_{M},h_{M})$ (resp.  $(K_X+\Delta,g_X)$). The form $\theta_{M}$ is a smooth Kähler form on $M$ and one has $\theta_{M}=p^*\theta_X$. 

Let $\omega_X$ be a Hermitian metric on $X$ and let $U\Subset M$ be a relatively compact open subset of $M$ containing a fundamental domain for the action of $\Gamma$. Up to rescaling the metric $\omega_X$, one can assume that 
$$\theta_{M} \ge p^*\omega_X$$
holds on $U$. As both quantities are $\Gamma$-invariant, the inequality above is actually valid on the whole $M$. This implies that $\theta_X=p_*\theta_{M}$ is a K\"ahler current; more precisely, one has
$$\theta_X \ge \omega_X.$$

Next, we claim that the positive $(1,1)$-current $\theta_X$ has bounded local potentials. Indeed, let $V\subset X$ a small open set where $\theta_X=dd^c \phi$. On $U\cap p^{-1}(V)$, the K\"ahler form $\theta_{M}$ can be written $\theta_{M}=dd^c (p^*\phi)$, hence $p^*\phi$ is smooth and thus locally bounded on $U\cap p^{-1}(V)$. As $p$ maps that open set surjectively to $V$, our claim is proved. 

Now, let $\pi: \widehat X\to X$ be a log resolution obtained by blowing up only smooth centers. It is well-known (cf. \textsl{e.g.} \cite[Lemma~~3.5]{DP}) that there exists a smooth $(1,1)$-form $\beta\in c_1(E)$ where $E$ is a (positive) rational combination of exceptional divisors of $\pi$ such that 
$$\pi^*\theta_X-\beta$$
is a K\"ahler current on $\widehat X$. By the observation above, this K\"ahler current has vanishing Lelong numbers, hence Demailly's regularization theorem \cite{D2} enables us to find a K\"ahler form in the same cohomology class of $\pi^*\theta_X-\beta$. In particular, $\widehat X$ is K\"ahler.  Moreover, as the cohomology class of $\pi^*\theta_X-\beta$ is rational, $\widehat X$ is projective thanks to Kodaira's embedding theorem. \\

  \noindent
 \textbf{Step 2. Case of $V$.}
 
 \noindent
 Let us call $j:V\to X$ the generically immersive map from the assumptions and let us denote by $\pi:\widehat X\to X$ the projective resolution obtained in Step 1. The strict transform of $\widehat V\subset \widehat X$ of $V$ by $\pi$ is a projective variety that maps bimeromorphically to $j(V)$ by $\pi$. As a result, $j(V)$ admits a projective resolution $\widetilde V \to j(V)$. Now, the bimeromorphic map 
 $$\widetilde V \dashedrightarrow V$$
 can be resolved by a finite sequence of blow ups along smooth centers; in particular, there exists a projective manifold endowed with a surjective, proper bimeromorphic map to $V$.
 \end{proof}

\begin{rem}
When the group $\Gamma$ is linear, one can say more. Indeed, $\Gamma$ is finitely generated as being a quotient of the fundamental group of the Zariski open set $X^{\circ}\subset X$ of regular values of $p$. By Selberg's lemma, there exists a finite index subgroup $\Gamma'\subset \Gamma$ with no torsion element. As $\Gamma'$ acts properly discontinuously on $M$, the action must be free. In particular, $X':=\quotient{\Gamma'}{M}$ is smooth and $K_{X'}$ is positive by the argument above, hence $X'$ is projective. As a result, $X$ admits a finite cover by a smooth projective manifold; in particular, it is projective too.  
\end{rem}

\subsection{The criterion}
\begin{thm}
\label{logbig}
Let $\overline X$ be a compact Kähler manifold endowed with a reduced divisor $D$ such that $\overline X\setminus D$ is uniformized by a weakly Bergman manifold. Let $\Delta_{\overline X}$ be the associated covering divisor on $\overline X$. 
Then, the $\mathbb Q$-line bundle $K_{\overline X}+\Delta_{\overline X}$ is big. 
\end{thm}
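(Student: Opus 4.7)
The plan is to construct a singular hermitian metric on the $\mathbb Q$-line bundle $K_{\overline X}+\Delta_{\overline X}$ whose curvature is a closed positive current, strictly positive on a nonempty open set; the bigness of this $\mathbb Q$-line bundle will then follow from a criterion of Boucksom~\cite{bou02}. To begin, since the Bergman metric $h_\Omega$ on $K_\Omega$ is $\mathrm{Aut}(\Omega)$-invariant, it is in particular invariant under the deck transformation group of the \'etale cover $p\colon\Omega\to X$, and hence descends to a singular metric $g_X$ on $K_X$. Its curvature $i\Theta(g_X)$ is a closed positive current which, by the weakly Bergman hypothesis, is smooth and strictly positive on a nonempty open subset of $X$.

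The heart of the argument is to show that $g_X$ extends, across each irreducible component $D_i$ of $D$, as a singular metric with positive curvature on $K_{\overline X}+\Delta_{\overline X}$. Near a general point of $D_i$, I would choose local coordinates so that $D_i=\{t=0\}$ in a polydisk $V=\mathbb D\times\mathbb D^{n-1}$, and use that a connected component of $p^{-1}(V\setminus D_i)$ is either (i) if $m_i<\infty$, a punctured disk $\mathbb D^*\times\mathbb D^{n-1}$ mapping to $V\setminus D_i$ by $(z,w)\mapsto(z^{m_i},w)$, or (ii) if $m_i=\infty$, the universal cover $\mathbb H\times\mathbb D^{n-1}\to V\setminus D_i$, $(z,w)\mapsto(e^{2\pi iz},w)$. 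In case~(i), the finite cyclic cover $q\colon\widetilde V=\mathbb D\times\mathbb D^{n-1}\to V$ extends the local piece of $p$, and the Riemann--Hurwitz formula yields a canonical isomorphism $K_{\widetilde V}\cong q^*\bigl(K_V+(1-1/m_i)D_i|_V\bigr)$. Under this identification, $q^*g_X$ agrees on $\widetilde V\setminus q^{-1}(D_i)$ with the restriction of $h_\Omega$ to the relatively compact open subset $\widetilde V\setminus q^{-1}(D_i)\subset\Omega$, and hence extends across $\{z=0\}$ as a singular metric with locally bounded weight on $K_{\widetilde V}$. Pushing forward via $q$, $g_X$ extends as a singular metric with positive curvature on $K_{\overline X}+(1-1/m_i)D_i$ in a neighborhood of a general point of $D_i$. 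Case~(ii) is analogous, the coefficient $1=1-1/\infty$ put on $D_i$ in $\Delta_{\overline X}$ exactly absorbing the logarithmic twist $dt=2\pi it\,dz$ coming from the exponential covering map.

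After this local analysis, $g_X$ defines a singular metric with plurisubharmonic local weights on $K_{\overline X}+\Delta_{\overline X}$ over $\overline X\setminus Z$, where $Z\subset\overline X$ is a closed analytic subset of codimension $\geq2$ (containing the self-intersections of $D$). Because these local weights are bounded above near $Z$, the Hartogs-type extension theorem for psh functions across a codimension-two analytic set extends them uniquely to $\overline X$, producing a globally defined singular metric on $K_{\overline X}+\Delta_{\overline X}$ whose curvature is a closed positive current, smooth and strictly positive on some nonempty open subset of $\overline X$. Boucksom's criterion~\cite{bou02} — bigness of a $\mathbb Q$-line bundle on a compact K\"ahler manifold is equivalent to the existence of a K\"ahler current in its first Chern class — then yields the bigness of $K_{\overline X}+\Delta_{\overline X}$, possibly after a harmless perturbation of the current by a small positive multiple of a K\"ahler form on $\overline X$ in order to promote it to a K\"ahler current.

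The main obstacle I foresee is the local-extension step, namely making precise that the pullback of $g_X$ through the local orbifold chart $\widetilde V$ genuinely extends as a singular metric with \emph{bounded} (in fact smooth) weight across $\{z=0\}$. This relies crucially on the fact that $\widetilde V\setminus q^{-1}(D_i)$ sits as a relatively compact open subset of $\Omega$, together with the standard fact that the Bergman kernel of a weakly Bergman manifold is locally bounded on such sets; the remaining pieces — pushforward of the current under the finite quotient $q$, extension across the codimension-two locus $Z$, and invocation of Boucksom's criterion — are essentially routine pluripotential theory.
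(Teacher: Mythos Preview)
Your overall strategy---descend the Bergman metric from $\Omega$ to a positively curved singular metric $g_X$ on $K_X$, extend it across $D$ as a metric on $K_{\overline X}+\Delta_{\overline X}$, and invoke Boucksom's criterion---is exactly the paper's. The extension across codimension~$2$ and the final appeal to \cite{bou02} are handled the same way.

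The gap is in the local extension step. You assert that, when $m_i<\infty$, the connected component $\widetilde V\setminus q^{-1}(D_i)\cong\mathbb D^*\times\mathbb D^{n-1}$ of $p^{-1}(V\setminus D_i)$ sits as a \emph{relatively compact} open subset of $\Omega$, so that the restriction of $h_\Omega$ has bounded weight there. This is not justified, and is generally false: nothing prevents points approaching the puncture from escaping to the boundary of $\Omega$ (or to infinity, since $\Omega$ is only assumed weakly Bergman, not a bounded domain). For a concrete failure, take $\Omega=\mathbb D^*$, $X=\mathbb D^*$, $p(z)=z^2$, $\overline X=\mathbb D$, $D=\{0\}$: here $m=2$, and a connected component of $p^{-1}(\mathbb D_r^*)$ is $\mathbb D_{\sqrt r}^*$, which accumulates on $0\notin\Omega$. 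The paper's own Lemma~\ref{boundary} and the remark following it address precisely this subtlety in a related context.

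The paper circumvents this by a different comparison: rather than restricting $h_\Omega$, it uses the \emph{intrinsic} Bergman metric $h_{\widetilde U}$ of the local piece $\widetilde U\cong\mathbb D^*\times\mathbb D^{n-1}$ (or $\mathbb D^n$ when $m_i=\infty$). The monotonicity of Bergman metrics under open inclusions gives $h_{\widetilde U}\le h_\Omega|_{\widetilde U}$, hence $g_U\le g_X$ after descending. Since $h_{\widetilde U}$ is the Bergman metric of a polydisk (pulled back through the explicit cyclic or exponential cover), it is explicitly computable, cf.~\eqref{computationnorm}, and one checks directly that $-\log|dz|^2_{g_U}+(1-\tfrac1m)\log|z_1|^2$ is locally bounded above. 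This is the missing idea you need to repair your argument; your treatment of case~(ii) is too vague for the same reason.
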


\begin{rem}
An immediate corollary of the theorem is that under those assumptions, the logarithmic canonical bundle $K_{\overline X}+D$ is big. In particular, $X$ is of log general type provided that $D$ has simple normal crossings. 
\end{rem}

\begin{proof} We proceed in two steps. \\

\noindent
\textbf{Step 1. Finding a metric on $K_X$}

\noindent
Let $X:=\overline X\setminus D$ and let $p:\widetilde X\to X$ the (Galois) cover from the assumptions. The Bergman metric $h_{\widetilde X}$ is invariant under $\mathrm{Aut}(p) \subset \mathrm{Aut}(X)$ and $K_{\widetilde X}=p^*K_X$, hence it descends to a singular metric $g_X$ on $K_X$ whose curvature  $i\Theta(g_X) $ is a Kähler form on some Euclidean open set of $X$. If one can show that $g_X$ extends across $D$ as a positively curved, singular metric on $K_{\overline X}+\Delta_{\overline X}$, then Boucksom's theorem \cite[Thm.~1.2]{bou02} will show that $K_{\overline X}+\Delta_{\overline X}$ is indeed big, as expected.   \\

\noindent
\textbf{Step 2. Extending the metric to $K_{\overline X}+\Delta_{\overline X}$}

\noindent
Next, we want to analyze the behavior of $g_{X}$ near a general point of each irreducible component of $D$. Let $x\in D$ be a such a point; we denote by $m$ the covering multiplicity attached to that point (or equivalently the chosen component). There exist a small neighborhood $x\in \overline U\simeq \mathbb D^n$ in  $\overline X$ and a system of coordinates $(z_1, \ldots, z_n)$ on $\overline U$ centered at $x$ such that $\overline U\cap D =(z_1 =0)$. In particular, $ U:=\overline U \setminus D \simeq \mathbb D^*\times \mathbb D^{n-1}$. Let $\widetilde U$ a connected component of $p^{-1}(U)$.  
$$
\begin{tikzcd}
\widetilde U  \arrow[hookrightarrow]{r} \arrow{d} & \widetilde X \arrow{d}{p} & \\
 U \arrow[hookrightarrow]{r} &X   \arrow[hookrightarrow]{r} & \overline X
\end{tikzcd}
$$
By the very definition of Bergman metrics (see \cite[(4.10.4) Corollary]{kob98}), it is easy to see that one has 
$$
h_{\widetilde U} \le h_{\widetilde X},
$$
where $h_{\widetilde U}$ is the Bergman metric on (the canonical bundle of) $\widetilde U$. Moreover, that same metric is invariant under $\mathrm{Aut}(\widetilde U)$ and it descends to a metric $g_U$ on $U\simeq \Delta^* \times \Delta^{n-1}$. One has 
\begin{equation}
\label{ineqq}
g_U\le g_{X} \quad\textrm{on $U$.}
\end{equation} 
If $m=\deg(p|_{\widetilde U})$ is finite, then $p|_{\widetilde U}$ is isomorphic to 
\begin{align*}
 \Delta^* \times \Delta^{n-1} & \to  \Delta^* \times \Delta^{n-1} \\
 (w_1, \ldots, w_n) & \mapsto (w_1^m, w_2, \ldots, w_n)
\end{align*}
and otherwise it is isomorphic to the universal cover 
\begin{align*}
 \Delta^{n} & \to  \Delta^* \times \Delta^{n-1} \\
 (w_1, \ldots, w_n) & \mapsto (e^{\frac{w_1+1}{w_1-1}}, w_2, \ldots, w_n)
\end{align*}
Accordingly, one finds
\begin{equation} \label{computationnorm}
|dz_1\wedge \ldots \wedge dz_n|^{2}_{g_U}=
\begin{cases}
m^2 \cdotp |z_1|^{2(1-\frac{1}{m})}(1-|z_1|^{2/m})^2\cdotp \prod_{k=2}^n(1-|z_k|^2)^2 \\
|z_1|^2 \log^2 |z_1|^2 \cdotp  \prod_{k=2}^n(1-|z_k|^2)^2
\end{cases}
\end{equation}
By the formula above, the quantity $$-\log |dz_1\wedge \ldots \wedge dz_n|^2_{g_U}+\Big(1-\frac 1m\Big) \cdotp \log |z_1|^2$$ is locally bounded above near $\{0\}\times \mathbb D^{n-1}$ in both cases. 

Now, let us view $\sigma:=dz_1\wedge \ldots \wedge dz_n$ as an element in  $H^0(\overline U,K_{\overline X})$. By what was just said,
the psh weight 
$$
-\log |\sigma|^2_{g_{U}}+(1-\frac 1m)\log |z_1|^2\quad\textrm{on $(K_{\overline X}+\Delta_{\overline X})|_{U}$}
$$ 
is bounded above locally near $\overline U \setminus U$, hence extends across that hypersurface to a psh weight on $(K_{\overline X}+\Delta_{\overline X})|_{\overline U}$. Letting $h_{\Delta_{\overline X}}$ be a singular metric on $\mathcal O_{\overline X}(\Delta_{\overline X})$ with curvature current $[\Delta_{\overline X}]$, the process above shows that $g_{X}\otimes h_{\Delta_{\overline X}}$ extends in codimension one with positive curvature, hence everywhere. This ends the proof. \end{proof}

\begin{corol}
\label{corollary}
Let $X$ be a normal, compact complex space and let $X^{\circ}$ be some non-empty analytic Zariski open subset. Assume that one of the following conditions holds. 
\begin{enumerate}
\item[$(i)$] $X$ is a quotient of a Bergman manifold, étale over $X^{\circ}$. 
\item[$(ii)$] $X$ is $\mathbb Q$-factorial, admits a Kähler resolution and $X^{\circ}$ is uniformized by a weakly Bergman complex space. 
\end{enumerate}
Let $\Delta_{ X} \subset X\setminus X^{\circ}$ be the covering divisor associated to the above étale cover of $X^{\circ}$. Then, the $\mathbb Q$-line bundle $K_X+\Delta_X$ is big. 
\end{corol}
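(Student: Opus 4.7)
The strategy is to apply Theorem~\ref{logbig} to a suitable resolution of $X$ and then descend the resulting bigness statement to $X$ itself via a section-theoretic argument. To begin, set $X^{\star} := X^{\circ} \cap X_{\mathrm{reg}}$. Since $X$ is normal, $X^{\circ} \setminus X^{\star}$ has codimension $\geq 2$ in $X$, so the codimension-one components of $X \setminus X^{\star}$ coincide with those of $X \setminus X^{\circ}$ and the covering divisor attached to the restricted \'etale cover $p^{-1}(X^{\star}) \to X^{\star}$ is still $\Delta_X$.

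I would then produce a K\"ahler log resolution $\pi \colon \widehat X \to X$ which is an isomorphism over $X^{\star}$ and such that $\widehat D := \pi^{-1}(X \setminus X^{\star})_{\mathrm{red}}$ has simple normal crossings. Such a $\pi$ exists by Lemma~\ref{projective-resolution} in case $(i)$ and by assumption in case $(ii)$, possibly after further smooth blowups (preserving the K\"ahler property). To apply Theorem~\ref{logbig} to $(\widehat X, \widehat D)$, I would check that $p^{-1}(X^{\star})$ is a weakly Bergman manifold: it is contained in $\widetilde{X^{\circ}}_{\mathrm{reg}}$ because \'etale covers preserve smoothness, and its complement $p^{-1}(X^{\circ} \setminus X^{\star})$ in $\widetilde{X^{\circ}}_{\mathrm{reg}}$ has codimension $\geq 2$. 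Since $L^2$-holomorphic top-degree forms extend across analytic subsets of codimension $\geq 2$, the two manifolds share the same Bergman Hilbert space, hence the same weakly Bergman property. Theorem~\ref{logbig} then provides the bigness of $K_{\widehat X} + \Delta_{\widehat X}$ on $\widehat X$.

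The final step is to descend this bigness to $X$. The $\mathbb Q$-factoriality assumption---automatic in case $(i)$, since quotient singularities are $\mathbb Q$-factorial---ensures that $K_X + \Delta_X$ is $\mathbb Q$-Cartier. For an integer $m$ divisible enough so that both $m(K_{\widehat X} + \Delta_{\widehat X})$ and $m(K_X + \Delta_X)$ are honest line bundles, I would construct an injective linear map $H^0(\widehat X, m(K_{\widehat X} + \Delta_{\widehat X})) \hookrightarrow H^0(X, m(K_X + \Delta_X))$ as follows. Via the identification $\widehat X \setminus \widehat D \simeq X^{\star}$, a section of the first bundle is a pluricanonical $m$-form on $X^{\star}$ whose allowed poles along the strict transform of each component $D_j$ of $\Delta_X$ have order at most $m(1 - 1/m_j)$---the covering multiplicity along $\pi^{-1}_{\ast} D_j$ coincides with $m_j$ because $\pi$ is a local isomorphism near a general point of $D_j$. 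These pole orders are exactly those permitted by $m(K_X + \Delta_X)$, so the form extends across the codimension-one part of $X \setminus X^{\star}$; normality of $X$ then yields a unique extension across the remaining codimension-$\geq 2$ locus to produce a global section of $m(K_X + \Delta_X)$. The map is clearly injective, so bigness of $K_{\widehat X} + \Delta_{\widehat X}$ transfers to $K_X + \Delta_X$.

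The main technical obstacle I anticipate is the bookkeeping in this final descent: verifying that the covering multiplicity of $\widehat D$ along $\pi^{-1}_{\ast} D_j$ matches $m_j$ exactly, and matching the pole orders on both sides of the push-down so that sections on $\widehat X$ land in the correct adjoint space on $X$. This is also where the $\mathbb Q$-factoriality assumption is indispensable, as it is precisely what allows us to pass from the Weil divisor $K_X + \Delta_X$ to a genuine line bundle on $X$ after taking a sufficiently divisible multiple.
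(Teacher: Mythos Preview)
Your proposal is correct and follows essentially the same route as the paper: pass to a K\"ahler resolution $\widehat X$, apply Theorem~\ref{logbig} to obtain bigness of $K_{\widehat X}+\Delta_{\widehat X}$, then descend by restricting sections to the locus where $\pi$ is an isomorphism and extending across the remaining codimension-$\geq 2$ set using normality and $\mathbb Q$-factoriality. The paper phrases the descent slightly more compactly by writing $\Delta_{\widehat X}=\pi^*\Delta_X+E$ with $E$ $\pi$-exceptional and restricting to $\widehat X\setminus\mathrm{Exc}(\pi)$, which is exactly the bookkeeping you flag in your final paragraph; your more explicit pole-order discussion and your care in checking that $p^{-1}(X^\star)$ remains weakly Bergman are welcome additions but do not constitute a different argument.
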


\begin{proof}
In both cases, $X$ admits a Kähler resolution $\pi:\widehat X\to X$.
% and we can assume without loss of generality that $\pi^{-1}(X\setminus (X^{\circ}\cap X_{\rm reg}))$ is a divisor. 
This is a consequence of Lemma~\ref{projective-resolution} in case $(i)$ and it is an assumption in case $(ii)$. 

\noindent
By Theorem~\ref{logbig}, the covering divisor $\Delta_{\widehat X}\subset \widehat X$ associated to the étale cover of $X^{\circ} \cap X_{\rm reg} \hookrightarrow \widehat X$ by a weakly Bergman manifold satisfies that $K_{\widehat X}+\Delta_{\widehat X}$ is big. As $\pi$ is an isomorphism over the generic point of each components of $\Delta_X$, there is a $\pi$-exceptional effective $\mathbb Q$-divisor $E$ such that $\Delta_{\widehat X}=\pi^*\Delta_X+E$. In particular, one has a $\mathbb Q$-linear equivalence $K_{\widehat X}+\Delta_{\widehat X} \sim_{\mathbb Q} \pi^*(K_X+\Delta_X)$ valid over $\widehat X \setminus \mathrm{Exc}(\pi)$.  Now, the following restriction map induces an injection for any divisible enough integer $m$:
\begin{align*}
H^0(\widehat X,  m(K_{\widehat X}+\Delta_{\widehat X})) \longrightarrow & \quad H^0(\widehat X \setminus \mathrm{Exc}(\pi),  m(K_{\widehat X}+\Delta_{\widehat X})) \\
 \simeq & \quad H^0(\pi(\widehat X \setminus \mathrm{Exc}(\pi)), m (K_X+\Delta_X) ) \\
 \simeq & \quad H^0(X, m (K_X+\Delta_X) ),
\end{align*}
and it follows that $K_X+\Delta_X$ is big. 
\end{proof}

\section{Applications to quotients of manifolds of bounded type} \label{sectapplication}

\subsection{Main result}
Let $\Omega$ be a complex manifold of \emph{bounded type}, \textsl{i.e.} a complex manifold admitting a bounded strictly psh function, as defined in \cite{BD18}. This category includes bounded domains, and is stable by taking étale covers or open/closed subvarieties; the reader may wish to think of $\Omega$ as a bounded domain.

Our main object of study will be a suitable compactification of a quotient of $\Omega$. Throughout the text, we will make various assumptions on this compactification; our general hypotheses will be as follows.

\begin{assumption}
\label{generalassumption}
We fix a reduced, irreducible, compact complex space $\X$, and an open (dense) Zariski subset $X = \X\setminus D$. We assume that $X$ is a quotient of $\Omega$, \textsl{i.e.} there exists a discrete subgroup $\Gamma \subset \mathrm{Aut}(\Omega)$, acting properly discontinuously, and a fixed identification of complex spaces
$$
X = \quotient{\Gamma}{\Omega}.
$$  
We denote by $p: \Omega \longrightarrow X$ the projection map. 
\end{assumption}

 We let $X^{\circ}:= X\setminus \mathrm{Sing}(p)\subset X$ be the locus of regular values of $p$ and we set $\Omega^{\circ}:=p^{-1}(X^{\circ})$. The set $X^{\circ}$ is a Zariski-analytic open subset of $X_{\rm reg}$. Note that the inclusion $X^{\circ} \subset X_{\rm reg}$ is strict if \textsl{e.g.} $p$ ramifies in codimension one. 
 %We fix a log resolution $\pi:\widehat X \to \overline X$ of the pair $(\overline X, \overline X \backslash X^{\circ})$. We set $D:=\pi^{-1} ( \overline X \backslash X^{\circ})$ and denote by $\Delta_{\widehat X}$ the orbifold divisor induced by the étale cover $p|_{\Omega^{\circ}}$.   We have the following diagram
 $$
\begin{tikzcd}
\Omega^{\circ}  \arrow[hookrightarrow]{r} \arrow{d} & \Omega \arrow{d}{p} &   \\
 X^{\circ} \arrow[hookrightarrow]{r} &X   \arrow[hookrightarrow]{r} & \overline X
\end{tikzcd}
$$

\begin{thm}
\label{newthm}
With the notation above, let $V$ be a normal, $\mathbb Q$-factorial compact complex space admitting a Kähler resolution and let $j:V\to \overline X$ be a generically immersive map such that $j(V)\not\subset \overline X\setminus X^{\circ}$. Set $V^{\circ}:=j^{-1}(X^{\circ})$.  

\noindent
Then, $V$ admits a natural covering divisor $\Delta_V$ supported on $V\setminus V^{\circ}$ and $K_{V}+\Delta_V$ is big.
\end{thm}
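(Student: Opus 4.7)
The plan is to reduce the statement directly to Corollary~\ref{corollary}(ii) applied to the pair $(V, V^{\circ})$. Since $V$ is by hypothesis normal, $\mathbb Q$-factorial and admits a K\"ahler resolution, everything amounts to exhibiting an \'etale cover of $V^{\circ}$ by a weakly Bergman complex space: the covering divisor $\Delta_V$ and the bigness of $K_V+\Delta_V$ will then be supplied by the corollary, and will automatically be supported on $V\setminus V^{\circ}$.

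The cover is built by base change. By definition of $X^{\circ}$, the restriction $p\colon\Omega^{\circ}\to X^{\circ}$ is an \'etale Galois cover with group $\Gamma$; I would therefore introduce the fiber product
$$
\widetilde V^{\circ} \;:=\; V^{\circ}\times_{X^{\circ}}\Omega^{\circ},
$$
together with its first projection $\widetilde V^{\circ}\to V^{\circ}$ and its second projection $q\colon\widetilde V^{\circ}\to\Omega^{\circ}\subset\Omega$. Stability of \'etaleness under base change makes the first projection into an \'etale Galois cover of $V^{\circ}$ with group $\Gamma$, which is exactly the kind of data appearing in Definition~\ref{defcovering}, and this already singles out a natural covering divisor supported on the codimension-one locus of $V\setminus V^{\circ}$.

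It then remains to verify that $\widetilde V^{\circ}$ is weakly Bergman. The map $q$ is generically immersive: at any point $(v,\omega)$ where $j$ is an immersion at $v$, one can locally invert $p$ near $\omega$ and realize $q$ as that local section of $p$ composed with $j$, which is immersive. Working component by component, the image $q(W)$ of an irreducible component $W\subset\widetilde V^{\circ}$ is an irreducible analytic subvariety of the bounded type manifold $\Omega$, hence weakly Bergman by the results of \cite{BD18} recalled in the introduction. Since $q|_W$ is generically finite and surjective onto $q(W)$, pullback of $L^2$-holomorphic top forms from $K_{q(W)}$ provides enough sections on a resolution of $W$ to conclude, via Remark~\ref{bimero}, that $W$ itself is weakly Bergman; putting all components together one obtains the weak Bergman property for $\widetilde V^{\circ}$, and Corollary~\ref{corollary}(ii) then delivers the desired bigness of $K_V+\Delta_V$.

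The main difficulty I expect is precisely this last step. The space $\widetilde V^{\circ}$ is in general singular and disconnected, and $q$ need not be generically injective on a given component (several preimages of the same $j$-fiber can fall into the same $\Gamma$-orbit of $\Omega$). Controlling these pathologies -- so that the bounded-type result of \cite{BD18} and Remark~\ref{bimero} apply legitimately on each component, and so that the covering divisor prescribed by Definition~\ref{defcovering} is coherent along each codimension-one piece of $V\setminus V^{\circ}$ -- is where the bulk of the care has to go.
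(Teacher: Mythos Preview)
Your overall strategy---reduce to Corollary~\ref{corollary}(ii) by exhibiting an \'etale cover of $V^{\circ}$ by a weakly Bergman space---is exactly the paper's. The gap is in the step you yourself flag: showing that (a component of) $\widetilde V^{\circ}$ is weakly Bergman.

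You invoke \cite{BD18} to assert that the image $q(W)\subset\Omega$ is weakly Bergman, and then pull back. But the relevant input, \cite[Lemma~1.2]{BD18}, requires the manifold to carry a \emph{complete K\"ahler metric} in addition to a bounded psh function that is strictly psh somewhere. The image $q(W)$ is only a locally closed subvariety of $\Omega$ (it sits in $\Omega^{\circ}$, not in $\Omega$), and there is no reason its regular locus should be complete K\"ahler; the same goes for the component $W$ itself. Your pullback-of-$L^2$-forms argument also does not produce $L^2$ forms on a resolution of $W$ without properness or some norm control, neither of which you have. So as written, the appeal to \cite{BD18} is not justified.

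The paper closes this gap by inserting a K\"ahler resolution $\pi:\widehat V\to V$ \emph{before} forming the fiber product. Then $\widehat V^{\circ}=\pi^{-1}(V^{\circ})$ is a Zariski open subset of a compact K\"ahler manifold, so Lemma~\ref{complete} equips it with a complete K\"ahler metric; this metric pulls back along the \'etale map to the smooth manifold $\widehat W^{\circ}:=\widehat V^{\circ}\times_{V^{\circ}} W^{\circ}$. Now $\widehat W^{\circ}$ is complete K\"ahler \emph{and} receives, via the bimeromorphic map $f$ to a component $Z^{\circ}\subset\Omega$, a bounded psh function strictly psh on a Zariski open set. At this point \cite[Lemma~1.2]{BD18} applies legitimately, and Remark~\ref{bimero} transfers the weak Bergman property down to $W^{\circ}$. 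The missing ingredient in your argument is precisely this completeness, manufactured from the compact K\"ahler resolution of $V$ rather than from $\Omega$.
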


 \begin{proof}
 Let $\pi:\widehat V\to V$ be a Kähler resolution, and let $\widehat V^{\circ}:=\pi^{-1}(V^{\circ})$. Let $Z^{\circ}$ be a connected component of $p^{-1}(j(V^{\circ}))$, let $W^{\circ}:=V^{\circ}\times_{j(V^{\circ})} Z^{\circ}$ and let  $\widehat W^{\circ}:=\widehat V^{\circ}\times_{V^{\circ}} W^{\circ}$. In the following, we replace $W^{\circ}$ and $\widehat W^{\circ}$ with their irreducible component dominating $V^{\circ}$ so that the map $f:\widehat W^{\circ}\to Z^{\circ}$ below is a bimeromorphic map of irreducible complex spaces:
 
 $$
\begin{tikzcd}
\widehat W^{\circ}  \arrow[rr, bend left, "f"] \arrow{r}\arrow{d}{q} & W^{\circ} \arrow{r} \arrow{d} & Z^{\circ} \arrow[hookrightarrow]{r} \arrow{d}{p|_{Z^{\circ}}} & \Omega^{\circ} \arrow{d}{p}  \\
 \widehat V^{\circ} \arrow{r}{\pi|_{\widehat V^{\circ}}} &  V^{\circ} \arrow{r}{j|_{V^{\circ}}} & j(V^{\circ}) \arrow[hookrightarrow]{r}  & X^{\circ}.   
\end{tikzcd}
$$
As an analytic Zariski open subset of a compact Kähler manifold,  $ \widehat V^{\circ}$ can be endowed with a complete Kähler metric, cf. Lemma~\ref{complete} below. As $q$ is étale, the same property holds for the smooth manifold $\widehat W^{\circ}$. The pull back by the bimeromorphic map $f$ of the globally bounded, strictly psh function on $Z^{\circ}
$ induced by the restriction of the one living on $\Omega$ is still globally bounded, psh and strictly psh on a non-empty Zariski open subset of $\widehat W^{\circ}$. By \cite[Lemma~1.2]{BD18}, $\widehat W^{\circ}$ is a weak Bergman manifold, hence $W^{\circ}$ is a weakly Bergman complex space, cf. Remark~\ref{bimero}. The theorem now follows from Corollary~\ref{corollary} $(ii)$.
\end{proof}

% An immediate consequence of Theorem~\ref{logbig} is the following
% \begin{prop}
% \label{prop1}
% Assume that $\widehat X$ is Kähler. Then,  $K_{\widehat X}+\Delta_{\widehat X}$ is big. 
% \end{prop}
% 
%There is a useful alternative way to think of divisor $\Delta_X$, provided by the following easy lemma. 
%\begin{lem}
%Let $\overline \Omega$ be the normalization of the main component of the fiber product $\Omega \times_{\overline X} \widehat X$ and let $q:\overline \Omega \to \widehat X$ be the induced map.
%Then, one has $K_{\overline \Omega}= q^*(K_{\widehat X}+\Delta_{\widehat X})$. 
%\end{lem}
%
%
% 
% \begin{corol}
% \label{Kbig}
% Assume that $X$ is compact and that $p$ does not ramify in codimension one. Then, $K_X$ is big. 
% \end{corol}
% 
% \begin{rem}
% If $p$ ramifies in codimension one, the conclusion is that $K_X+B$ is big where $B$ is the ramification divisor, i.e. $K_{\Omega}=p^*(K_X+B)$.
% \end{rem}

We used the following standard result, which we recall for the reader's convenience. 
\begin{lem}
\label{complete}
Let $Y$ be a compact K\"ahler manifold and let $Y^{\circ}\subset Y$ be a Zariski open subset. There exists a complete K\"ahler metric $\omega$ on $Y^{\circ}$. 
\end{lem}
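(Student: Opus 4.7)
The plan is first to reduce to the case where $D := Y \setminus Y^\circ$ is a simple normal crossing divisor, and then to build a complete metric via the classical Poincar\'e-type perturbation of the ambient K\"ahler form.

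I would begin by applying Hironaka's embedded resolution to the closed analytic set $D$: this produces a finite sequence of blow-ups along smooth centers contained in $D$, yielding $\pi : \widetilde Y \longrightarrow Y$ such that $\pi$ is an isomorphism over $Y^\circ$ and $E := \pi^{-1}(D)_{\mathrm{red}} = \sum_i E_i$ is a simple normal crossing divisor on $\widetilde Y$. Since a blow-up of a compact K\"ahler manifold along a smooth center remains K\"ahler (via the standard recipe $\pi^\ast \omega_Y - \varepsilon\, c_1(\mathcal O(\mathrm{Exc}), h)$ for small $\varepsilon > 0$), $\widetilde Y$ is compact K\"ahler; fix a K\"ahler form $\omega_{\widetilde Y}$. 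Because $\pi$ restricts to a biholomorphism $\pi^{-1}(Y^\circ) \xrightarrow{\sim} Y^\circ$, any complete K\"ahler metric on $\widetilde Y \setminus E$ transports directly to $Y^\circ$, so it suffices to work upstairs.

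Next, I would pick defining sections $s_i \in H^0(\widetilde Y, \mathcal O_{\widetilde Y}(E_i))$ with $\mathrm{div}(s_i) = E_i$ and smooth hermitian metrics $h_i$ on $\mathcal O_{\widetilde Y}(E_i)$, rescaled so that $|s_i|^2_{h_i} < e^{-1}$ everywhere, and form the Poincar\'e-type form
$$
\omega_P \ :=\ C\,\omega_{\widetilde Y} \ -\ \sum_i i\partial\bar\partial \log\bigl(-\log |s_i|^2_{h_i}\bigr),
$$
with a constant $C > 0$ to be chosen large. In a local chart where $E$ is cut out by $z_1 \cdots z_k = 0$, a direct computation gives that each summand $-i\partial\bar\partial \log(-\log|s_i|^2_{h_i})$ is positive semi-definite up to a bounded smooth error coming from the curvature of $h_i$, with leading Poincar\'e behavior $\frac{i\, dz_i \wedge d\bar z_i}{|z_i|^2\,(\log|z_i|^2)^2}$ transverse to $E_i$. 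Choosing $C$ large enough to absorb the bounded errors, $\omega_P$ becomes a genuine K\"ahler form on $\widetilde Y \setminus E$.

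Finally, completeness of $(\widetilde Y \setminus E, \omega_P)$ follows from the model punctured-disc estimate: any path $\gamma$ with $\gamma(t) \to E_i$ transversally has $\omega_P$-length bounded below by a positive multiple of $\int \frac{d|z_i|}{|z_i|\,|\log |z_i||}$, which diverges; and compactness of $\widetilde Y$ rules out any other escape to infinity. Pulling back by $\pi^{-1}$ then yields the desired complete K\"ahler metric on $Y^\circ$. The single nontrivial ingredient is the positivity and completeness analysis of $\omega_P$ near the SNC boundary $E$, which is classical once the model form is identified; the reduction to the SNC case via a K\"ahler log-resolution and the transfer back to $Y^\circ$ are formal.
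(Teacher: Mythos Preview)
Your proposal is correct and follows essentially the same approach as the paper: reduce to the simple normal crossing boundary via a K\"ahler log resolution that is an isomorphism over $Y^\circ$, and then use the standard Poincar\'e-type metric on the complement. The paper's proof simply cites references (\cite{CG}, \cite[Th\'eor\`eme~1.5]{dem82}) for this last step, whereas you have written out the explicit construction of $\omega_P$ and the positivity/completeness verification; the content is the same.
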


\begin{proof}
Up to taking a log resolution of $(Y,Y\setminus Y^{\circ})$ leaving $Y^{\circ}$ untouched, one can assume that the complement of $Y^{\circ}$ in $Y$ is a simple normal crossings divisor. Then, it is standard to construct Poincaré type metrics on $Y^{\circ}$, cf. \textsl{e.g.} \cite{CG}, or \cite[Th\'eor\`eme 1.5]{dem82}. 
\end{proof}

\begin{rem}
As already observed in the introduction, by the main theorem of \cite{gri71}, \emph{any projective manifold} is covered by Zariski open subsets which are uniformized by pseudoconvex bounded domains. Thus, all projective varieties fall in the scope of Assumption \ref{generalassumption}. Theorem \ref{newthm} implies in particular that for any projective manifold $\overline{X}$, there exists a Zariski open subset $X \subset \overline{X}$ such that for any subvariety $V \subset \overline{X}$, the quasi-projective variety $V \cap X$ is of log general type. 

Of course, the most interesting results will be obtained in settings where we are able to obtain a good description of this open subset $X$. 
\end{rem}

\subsection{Examples of applications of Theorem~\ref{newthm}} \label{sectexamples}
The following are the main examples of applications. 
 \begin{enumerate}
\item \textsc{[Compact étale]} 

\noindent
In the setting of Assumption~\ref{generalassumption}, assume furthermore that $\overline X=X$ is a smooth manifold and that $p$ is étale. Let $W\subset  X$ be an irreducible variety of $ X$ and let $j:V\to W$ be a resolution of singularities. Then $K_V$ is big; i.e. $W$ is of general type. This is the content of the main result of \cite{BD18}.\\

\item \textsc{[Non-compact étale]} 

\noindent
More generally, assume only that $\overline X$ is a smooth compact Kähler manifold and that $p$ is étale. Let $W\subset \overline X$ be a compact subvariety not included in $\overline X\setminus X$ and let $j:V\to W$ be a log resolution of $(W,W\cap (\overline X\setminus X))$. Then there exists a divisor $\Delta_V$ on $V$, supported over $\overline X\setminus X$ via $j$ such that $K_V+\Delta_V$ is big. In particular, $W \cap X$ is of log general type. \\

\item \textsc{[Compact non-étale]} 

\noindent
Assume that $\overline X=X$, let $W\subset X$ be an irreducible subvariety not included in the branch locus of $p$ and let $j:V\to W$ be a resolution of $W$. Then, there exists a natural divisor $\Delta_V$ on $V$ with coefficients in $(0,1)$, supported over the branch locus of $p$ via $j$ such that $K_V+\Delta_V$ is big. \\

\item \textsc{[$\mathbb Q$-factorial subvarieties]}

\noindent
In the setting~\ref{generalassumption}, assume that $\overline X$ admits a Kähler resolution. Let $V\subset \overline X$ be a normal, $\mathbb Q$-factorial subvariety not included in the branch locus of $p$ or $\overline X \setminus X$. Then, there exists a reduced divisor $\Delta_V$ on $V$, supported on $\overline X\setminus X^\circ$, such that $K_V+\Delta_V$ is big.\\

 \item \textsc{[$\mathbb Q$-factorialization of subvarieties]} \label{qfactlabel}
 
 \noindent
 Assume that $\overline X=X$, let $W\subset X$ be an irreducible closed subvariety not included in the branch locus of $p$, let $\widetilde W$ be a connected component of $p^{-1}(W)$ and let $\Gamma_{\widetilde W} \subset \Gamma$ be the subgroup of $\Gamma$ preserving $\widetilde W$.  Let $\widetilde V\to \widetilde W$ be a $\Gamma_{\widetilde W}$-equivariant resolution of singularities of $\widetilde{W}$ and let $V:= \quotient{\Gamma_{\widetilde W}}{\widetilde V}.$ This normal variety has quotient singularities, in particular it is $\mathbb Q$-factorial, and it comes equipped with a birational map $j:V\to W$. 
  $$
\begin{tikzcd}
\widetilde V \arrow{r} \arrow{d} &\widetilde W \arrow{d}  \arrow[hookrightarrow]{r}  & \Omega \arrow{d}{p}    \\
 V \arrow{r}{j} &W   \arrow[hookrightarrow]{r} &  X
\end{tikzcd}
$$

 \noindent
 Moreover,  there is a natural branching divisor $\Delta_V$ on $V$ attached to the cover $\widetilde V \to V$, supported over $\mathrm{Sing}(p)$ via $j$ and satisfying that $K_{V}+\Delta_V$ is big.

 \end{enumerate}

 \subsection{Two remarks about the singular vs smooth case}
 \label{examples2}
 The following two examples show that the situation in the singular case is rather subtle. 
 
 \begin{enumerate}
\item \textsc{[$K_X$ big v. $X$ of general type]} 

\noindent
Assume for simplicity that $X$ is compact and $p$ is quasi-étale, that is, $\mathrm{codim}_X(X\setminus X^\circ)\ge 2$. Then, Example (4) in Section~\ref{sectexamples} shows that $K_X$ is big. Unless $X$ has only canonical singularities, this property is weaker than saying that $X$ is of general type, \textsl{i.e.} that the canonical bundle $K_{\widehat X}$ of a (or any) resolution $\widehat X\to X$ is big. 

\noindent
For instance, there exist surfaces $S$ which are a quotient of the bi-disk $\Delta^2\subset \mathbb C^2$ such that $K_S$ is ample and yet $S$ is not of general type. One can realize such surfaces as  $S=(C_1\times C_2)/G$ where $C_1,C_2$ are curves of genus at least two and $G$ is a finite group acting diagonally, cf. \cite[Table~1]{BP16}. \\

\item \textsc{[$X$ of general type but not all its subvarieties]} 

\noindent
Assume again that $X$ is compact and $p$ is quasi-étale. The example above shows that $X$ needs not be of general type. Even if we assume that $X$ is of general type, it may still happen that $X$ contains subvarieties $V\not \subset X_{\rm sing}$ such that $V$ \textit{is not} of general type. 

\noindent
Indeed, let $C$ be a hyperelliptic curve and let $f:C\to \mathbb P^1$ be the double cover; it induces an involution $\iota \in \mathrm{Aut}(C)$. The transformation $C\times C \ni (z,w) \mapsto (\iota(w),\iota(z))$ induces an action of $\quotientd{\mathbb Z}{4 \mathbb Z} $ on $C\times C$. Let $X:= \quotient{ \quotientd{\mathbb Z}{4\mathbb Z}}{C\times C}$; it is a projective variety with canonical singularities  and ample canonical bundle admitting a cover by the bidisk in $\mathbb C^2$. Yet, the diagonal map $C\to X$ factors through $\mathbb P^1$ as showed below.
 \begin{figure}[h!]
 \centering
\begin{tikzcd}
C \ar[hookrightarrow]{r}{\Delta}\ar["f"']{rd} & C\times C \ar{r} &X \\
 & \mathbb P^1 \ar["j"']{ru} & 
\end{tikzcd}
%\caption{}
%\label{fig:D}
\end{figure}

\end{enumerate}

\section{Comparison of covering divisors}
\label{orbifold}
In this section, we work under the general Assumption \ref{generalassumption}. Given any generically immersive map $V \longrightarrow \overline{X}$, Definition \ref{defcovering} allows us to attach a natural covering divisor $\Delta_{\widehat{V}}$ to any resolution of singularities of $\widehat{V} \longrightarrow V$. In this section, we will gather a few facts allowing us to compare the natural orbifold structures on adequate resolution of singularities of $V$ and $\overline{X}$.  

Let us recall how Definition \ref{defcovering} permits to construct the orbifold structures in this context.

\subsection{Natural orbifold structure on a resolution of singularities of a singular quotient}
\label{construction}

Let us fix a log-resolution $\widehat{X} \overset{\pi}{\longrightarrow} \overline{X}$, such that the preimage of the union of $D$ and the closure of $\mathrm{Sing}(p)$ is a divisor $E$ with simple normal crossings. Let $E = \sum_i E_i$ be the decomposition of $E$ into its irreducible components. Also, let $g_{X}$ be the natural smooth metric induced on $K_{X}$ by the Bergman kernel on $K_{\Omega}$. 

\medskip

Let $Y$ be the normalization of the component $T_{\widehat X}$ of the fiber product $\widehat{X} \times_{\overline{X}} \Omega$ dominating $\widehat X$ (or, equivalently, $\overline X$). It sits in the following commutative diagram.
 \begin{figure}[h]
 \centering
\begin{tikzcd}
Y \ar{r}{f_{\widehat X}}\ar{d}{\sigma} & \widehat{X} \ar{d}{\pi} \\
\Omega \ar{r}{p} & \X
\end{tikzcd}\caption{}
\label{fig:D}
\end{figure}

Remark that $\Gamma$ acts naturally on the product $\Omega \times_{X} \widehat{X}$, by having its natural action on the first factor, and leaving the second one invariant. Hence it also acts on $Y$.  
\medskip

Let $U \subset \widehat{X}$ be a sufficiently small neighborhood of the generic point of $E_i$. The map $f_{\widehat X}: f_{\widehat X}^{-1} (U \setminus E_i) \longrightarrow U \setminus E_i$ is an étale cover. This map induces a cyclic cover when restricted to any of the connected components of its source: the Galois group of this cover is isomorphic to $\quotientd{\mathbb Z}{m_i \mathbb Z}$ for some $m_i \in \mathbb N^\ast \cup \{\infty\}$ (with $\infty \cdot \mathbb Z = 0$). 
\medskip

Then, Definition \ref{defcovering} gives us the following natural orbifold structure on $\widehat{X}$.

\begin{defi} 
We let $\Delta_{\widehat{X}}$ be the covering divisor associated to the data $(\widehat{X}, \widehat{X} \setminus E, f_{\widehat{X}} |_{\widehat{X} \setminus E})$ by means of Definition \ref{defcovering}. By the previous discussion, it is equal to the $\mathbb Q$-divisor with simple normal crossing support $\sum_i (1 - \frac{1}{m_i}) E_i$, where the $m_i$ are defined as above. 
\end{defi}

One way to think about this orbifold structure is provided by the following formula which is direct consequence of the definition above. 
\begin{lem}
\label{formula}
With the notation above, one has
$$K_Y=f_{\widehat X}^*(K_{\widehat X}+\Delta_{\widehat X}).$$
\end{lem}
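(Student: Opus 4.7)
The plan is to verify this equality of $\mathbb{Q}$-Cartier divisors by a codimension-one analysis on $Y$, using the local models for the map $f_{\widehat X}$ already recorded during the proof of Theorem~\ref{logbig}. Since $\widehat X$ is smooth and $Y$ is normal, both $K_Y$ and $f_{\widehat X}^*(K_{\widehat X}+\Delta_{\widehat X})$ are $\mathbb{Q}$-Cartier, so it is enough to match the coefficient of each prime divisor of $Y$ on both sides. Away from the preimage $f_{\widehat X}^{-1}(E)$, the map $f_{\widehat X}$ is the base change along $\pi$ of the \'etale covering $p\colon p^{-1}(X^{\circ})\to X^{\circ}$ and is therefore itself \'etale; consequently $K_Y = f_{\widehat X}^* K_{\widehat X}$ on that open set, and the formula holds there because $\Delta_{\widehat X}$ is supported on $E$.

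Next, I would fix an irreducible component $E_i$ of $E$ and work in an analytic neighborhood $U$ of a generic point of $E_i$. By the construction preceding the lemma, each connected component of $f_{\widehat X}^{-1}(U\setminus E_i)$ is a cyclic \'etale cover of $U\setminus E_i$ of degree $m_i\in\mathbb{N}^{*}\cup\{\infty\}$. The same local computation as in the proof of Theorem~\ref{logbig} shows that the closure of such a component in $Y$ is modelled, in suitable coordinates $(w_1,\ldots,w_n)$, by
\begin{equation*}
(w_1,\ldots,w_n)\longmapsto (w_1^{m_i},w_2,\ldots,w_n)\quad\text{if $m_i<\infty$,}
\end{equation*}
and by the universal cover
\begin{equation*}
(w_1,\ldots,w_n)\longmapsto \bigl(e^{(w_1+1)/(w_1-1)},w_2,\ldots,w_n\bigr)\quad\text{if $m_i=\infty$.}
\end{equation*}
In the finite case, the unique prime divisor of $Y$ lying over $E_i$ in this chart is $\widetilde E_i=\{w_1=0\}$, one has $f_{\widehat X}^*E_i = m_i\widetilde E_i$, and a direct Riemann--Hurwitz computation yields a ramification contribution $(m_i-1)\widetilde E_i$ to $K_Y - f_{\widehat X}^*K_{\widehat X}$, which is exactly $\bigl(1-\tfrac{1}{m_i}\bigr)f_{\widehat X}^*E_i$, i.e.\ the contribution of $E_i$ to $f_{\widehat X}^*\Delta_{\widehat X}$. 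In the infinite case, no prime divisor of $Y$ lies over $E_i$ in this chart, the map $f_{\widehat X}$ is \'etale there, and $f_{\widehat X}^*E_i$ vanishes locally, so both sides contribute nothing along the preimage of $E_i$.

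Summing these local contributions over all components $E_i$ and combining with the agreement on the complement of $f_{\widehat X}^{-1}(E)$ gives the desired equality on the smooth locus of $Y$, which extends to the whole of $Y$ by normality. The only mild subtlety is to verify that the explicit models imported from the proof of Theorem~\ref{logbig} actually describe $Y$ near the preimage of a generic point of $E_i$; this follows from the fact that these models are already normal and realize cyclic covers of the correct degree, so the universal property of normalization identifies them with $Y$ locally. I expect no serious obstacle: the whole argument is a bookkeeping of the Riemann--Hurwitz formula applied to the two cyclic cover models, which is precisely what the definition of $\Delta_{\widehat X}$ was designed to encode.
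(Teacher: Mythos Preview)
The paper gives no proof beyond calling the formula a ``direct consequence of the definition above''; your Riemann--Hurwitz verification along each component $E_i$ is exactly the natural way to unpack that assertion, and it is correct. One minor quibble: normality of $Y$ alone does not guarantee that $K_Y$ is $\mathbb{Q}$-Cartier, but your argument only needs to compare $\mathbb{Q}$-Weil divisor coefficients prime by prime, so this claim is unnecessary (and in any case follows \emph{a posteriori} from the formula itself).
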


\begin{rem}
A similar, but coarser way of forming an orbifold pair $(\widehat{X}, \Delta)$ is used in \cite{crt17}. In that article, each component $E_i$ is endowed with the multiplicity $\infty$ if $\pi(E_i) \subset \X \setminus X$, and with the multiplicity $\mu_i = |S_{\pi(E_i)} |$ otherwise (where $S_{\pi(E_i)}$ is the isotropy group of the generic point of $\pi(E_i)$).
With our convention, if $\pi(E_i) \cap X \neq \emptyset$, the Galois group $\quotientd{\mathbb Z}{m_i \mathbb Z}$ identifies with a subgroup of the stabilizer of any inverse image of $\pi(E_i)$ in $\Omega$, \textsl{i.e.} it is a subgroup of the isotropy group $S_{\pi(E_i)}$. Consequently, we have $m_i \leq \mu_i$. \end{rem}

Clearly, a component $E_i$ such that $\pi(E_i) \cap X \neq \emptyset $ satisfies $m_i < \infty$. Conversely, one can prove the following:
%The following result shows that the components $E_i$ with infinite multiplicity are exactly the ones sitting above points of the boundary $\overline X\setminus  X$. 

\begin{lem}
\label{boundary}
Assume that $\Omega$  is a bounded domain satisfying the following property: for all $z\in \partial \Omega$, there exist a neighborhood  $ U_z \subset \mathbb C^n$ of $z$ and a psh function $\varphi_z$ on $U_z$ such that $\varphi_z^{-1}((-\infty,0))= \Omega\cap U_z$. 

\noindent
Then, any component $E_i$ such that $\pi(E_i) \cap X = \emptyset $ will satisfy $ m_i = \infty.$ 
\end{lem}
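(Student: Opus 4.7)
I will argue by contradiction. Suppose $m_i < \infty$ for some component $E_i$ with $\pi(E_i) \subset D$. The plan is to produce a holomorphic disc $\tilde s : \mathbb D \to \overline{\Omega}$ with $\tilde s(\mathbb D^\ast) \subset \Omega$ and $\tilde s(0) \in \partial \Omega$, and then derive a contradiction from the psh barrier provided by the hypothesis.

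Choose a generic point $x$ of $E_i$ and a polydisc neighborhood $\overline U \cong \mathbb D^n$ in $\widehat X$ disjoint from the other components of $E$, with coordinates $(z_1,\ldots,z_n)$ so that $E_i \cap \overline U = \{z_1 = 0\}$. Arguing exactly as in Step 2 of the proof of Theorem~\ref{logbig}, the assumption $m_i < \infty$ ensures that an irreducible component of $f_{\widehat X}^{-1}(\overline U) \subset Y$ is biholomorphic to $\mathbb D^n$ via coordinates $(w_1,\ldots,w_n)$ in which $f_{\widehat X}$ reads $(w_1,\ldots,w_n)\mapsto (w_1^{m_i},w_2,\ldots,w_n)$. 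Restricting $\sigma$ to the punctured transversal disc $\{w_2=\cdots=w_n=0,\, w_1\neq 0\}$ produces a holomorphic map $s : \mathbb D^\ast \to \Omega$, which is bounded because $\Omega$ is. By Riemann's removable singularity theorem, $s$ extends to a holomorphic map $\tilde s : \mathbb D \to \mathbb C^n$ with $\tilde s(\mathbb D) \subset \overline \Omega$.

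Next, set $q := \tilde s(0)$. I claim that $q \in \partial \Omega$. If instead $q \in \Omega$, then the continuity of $p$ on $\Omega$ combined with the relation $p \circ \sigma = \pi \circ f_{\widehat X}$ would give
\[
p(q) = \lim_{w_1 \to 0} p\bigl(s(w_1)\bigr) = \lim_{w_1 \to 0} \pi\bigl(w_1^{m_i},0,\ldots,0\bigr) \in \pi(E_i) \subset D,
\]
contradicting $p(q) \in X$. Now apply the hypothesis at $q$: there exist a neighborhood $U_q \subset \mathbb C^n$ of $q$ and a psh function $\varphi$ on $U_q$ with $\varphi^{-1}((-\infty,0)) = \Omega \cap U_q$. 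For $r>0$ small, $\tilde s(\overline{\mathbb D_r}) \subset U_q$, and $\psi := \varphi \circ \tilde s$ is psh on $\mathbb D_r$, with $\psi(0) \geq 0$ (since $q \notin \Omega \cap U_q$) but $\psi < 0$ on $\mathbb D_r^\ast$ (since $\tilde s$ maps it into $\Omega \cap U_q$). The sub-mean value inequality then yields
\[
0 \leq \psi(0) \leq \frac{1}{2\pi}\int_0^{2\pi} \psi(re^{i\theta})\,d\theta < 0,
\]
a contradiction. Hence $m_i = \infty$.

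The main technical point is the local identification of an irreducible component of $f_{\widehat X}^{-1}(\overline U)$ in the normalization $Y$ with a polydisc on which $f_{\widehat X}$ has the stated cyclic-cover form; this rests on the structure of a cyclic cover ramified along a smooth hypersurface and was essentially already worked out in the proof of Theorem~\ref{logbig}. Everything downstream is a clean combination of Riemann's extension theorem with the sub-mean value property of psh functions.
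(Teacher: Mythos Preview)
Your argument is correct and follows the same route as the paper's: extend the map to $\Omega$ across the puncture using boundedness, show the extended point lands on $\partial\Omega$, and contradict the psh barrier via the sub-mean-value inequality (the paper uses the equivalent maximum principle). One small imprecision worth fixing: since $\pi(E_i)\subset D$ is disjoint from the image of $p$, the fibre product---and hence $Y$---is empty over $E_i$, so the relevant component of $f_{\widehat X}^{-1}(\overline U)$ is only $\mathbb D^\ast\times\mathbb D^{n-1}$, not $\mathbb D^n$; this is harmless for you because you only use $\sigma$ on the punctured transversal disc, whereas the paper passes to an abstract smooth compactification $V\supset V^\circ$ and runs the maximum principle on all of $V$.
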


\begin{proof}
Note that by upper semicontinuity of $\varphi_z$, one has ${\varphi_{z}}|_{\partial \Omega \cap U}\equiv 0$. Let $E_i\subset \widehat X$ be a divisor with finite multiplicity and let us consider the étale cover $q: q^{-1} (U \setminus E_i) \longrightarrow U \setminus E_i\simeq \Delta^*\times \Delta^{n-1}$ as above. Let $V^{\circ}\subset Y$ be a connected component of $q^{-1} (U \setminus E_i)$, so that $q|_{V^{\circ}}$ can be compactified as a (surjective) ramified finite cover $\overline q:V\to U\simeq \Delta^n$ of order $m_i$ where $V$ is some smooth manifold containing $V^{\circ}$ as a Zariski open subset. In particular, one has 
\begin{equation}
\label{pi}
(\overline q \circ \pi )(V\setminus V^{\circ})= \pi(E_i\cap U).
\end{equation}
 As $\Omega\subset \mathbb C^n$ is bounded, the map $f^{\circ}:=\sigma|_{V^{\circ}}:V^{\circ}\to \Omega$ extends to a holomorphic map $f:V\to \overline \Omega$. We claim that 
 \begin{equation}
 \label{image}
 \mathrm{Im}(f)\subset \Omega
 \end{equation}
  from which the Lemma follows. Indeed one would then have $\overline q \circ \pi = p\circ f$ on $V$ by density of $V^{\circ}$ in $V$ and therefore one would get $\pi(E_i\cap U) \subset \mathrm{Im}(p) \subset X$ given \eqref{pi}. 
  
  \noindent
  We now prove \eqref{image} arguing by contradiction. Suppose that there exists $v\in V$ such that $f(v)\in \partial \Omega$. Let $z:=f(v)$ and let $(U_z,\varphi_z)$ be provided by our assumption on $\Omega$. There exists a small neighborhood $W$ of $v\in V$ such that $f(W)\subset U_z$. Then, the psh function $\varphi_z \circ f|_W$ is non-negative and attains its maximum $0$ at the interior point $v\in W$. By the maximum principle, $\varphi_z \circ f|_W$ is constant, identically equal to $0$. This is in contradiction with the fact that $(\varphi_z\circ f|_W)(W\cap V^\circ) \subset \varphi_z(\Omega\cap U_z) \subset (-\infty,0)$. 
  %Up to composing $f$ with a linear transformation, one can assume that $|z_1(f(v))|=\sup_{w\in \overline {\Omega}} |z_1(w)|$ where $z_1:\mathbb C^n \to \mathbb C$ is the projection onto the first coordinate. Now, if $\mathbb D \subset V$ is a generic holomorphic disk passing through $v$, then $(z_1\circ f)|_{\mathbb D}$ is non-constant hence open, which contradicts the maximality of $|z_1(f(v))|$. 
\end{proof}

\begin{rem}
Lemma~\ref{boundary} fails for a general bounded domain. Indeed, let $\pi:\widehat X \to X'\simeq \Omega'/\Gamma$ be a resolution of a singular compact quotient $X'$ of some bounded domain $\Omega'$ and let $E\subset \widehat X$ be an irreducible, $\pi$-exceptional divisor. Then, define $\Omega:=\Omega'\setminus p^{-1}(\pi(E)), X:=X'\setminus \pi(E) $ so that $X\simeq \Omega/\Gamma$ is naturally compactified by $X'$. Then, the multiplicity of $E$ associated to $\pi:\widehat X\to X'$ is finite and yet $\pi(E)\cap X=\emptyset$.
\end{rem}

\subsection{Relative orbifold construction} 
\label{functconst} 
The previous construction has a relative variant, which uses Definition~\ref{defcovering} to construct a particular model $(\widehat{V}, \Delta_{\widehat{V}})$, once we are given a generically immersive map $q :V \longrightarrow \X$ and a resolution $\widehat{X} \longrightarrow \overline{X}$. 

Suppose here that $V$ is an $m$-dimensional complex manifold, and that the generically immersive map $q$ is such that $q(V) \not\subset  D\cup \mathrm{Sing}(p)$. Then, we will construct $\widehat{V}$ as follows. \medskip

Let $V' \overset{j} \longrightarrow \widehat{X}$ be the component of the fiber product $V \times_X \widehat{X}$ that dominates $V$. Let $\widehat{V} \longrightarrow V'$ be a  resolution of singularities; it induces a birational map $\sigma: \widehat V \to V$. Let us denote by $F\subset\widehat V$ the non-étale locus of $\sigma$, and define $F^{(1)}\subset F$ to be the union of all irreducible components of $F$ with codimension one. We also introduce the fiber product $\widetilde{V} = V \times_X \Omega$. Note that this complex space may have infinitely many connected components, all isomorphic under the action of $\Gamma$.

Finally, we let $T_{\widehat V}$ be the union of all irreducible components of the product $\widetilde{V} \times_V \widehat{V}$ dominating $\widehat V$, and we denote by $Z$ be the normalization of $T_{\widehat V}$. All these operations lead to the diagram showed in Figure~\ref{fig:MD}.
 \begin{figure}[h]
 \centering
\begin{tikzcd}[row sep=tiny, column sep = small]
 & \widehat V \arrow{dd}{\sigma} \arrow{rr}{r} & &\widehat X  \arrow{dd}{\pi}\\
Z \arrow{ur}{f_{\widehat V}} \arrow{dd}{\rho}  & & & \\
 & V \arrow{rr}{q} & & \overline{X} \\
\widetilde{V} \arrow{ur} \arrow{rr}{\iota} & & \Omega\arrow[ur,"p"'] &
\end{tikzcd}
\caption{}
\label{fig:MD}
\end{figure}

Let $\Gamma_V \subset \Gamma$ be the stabilizer of $\iota(\widetilde{V}) \subset \Omega$. Then $\Gamma_V$ acts on $V \times_X \Omega$, by its natural action on the second factor, and by the trivial action on the first. Thus, it induces a \emph{natural action on $\widetilde{V}$, making $\iota$ a $\Gamma_V$-equivariant map}.

Under these conditions, the group $\Gamma_V$ has a natural action on the fiber product $\widetilde{V} \times_V \widehat{V}$, by operating on the first factor, and leaving the second one invariant. This action leaves $T_{\widehat{V}}$ invariant and, therefore, it induces a natural action on $Z$. Again, $Z$ may have more than one connected component in general, all equivalent under the action of $\Gamma$.
\medskip

By construction, the map $f_{\widehat V}$ is étale over $\widehat V \setminus F$. By purity of the branch locus, it is actually étale over $\widehat V \setminus F^{(1)}$. Therefore, one can apply Definition~\ref{defcovering}, and endow each component $F_i \subset F^{(1)}$ with a natural multiplicity $n_i \in \mathbb N^\ast \cup \{\infty\}$. 
\begin{comment}
Here, $n_i$ is set to be the order of the Galois group of a connected component of the étale cover $f_{\widehat V}: f_{\widehat V}^{-1}(U \setminus F_j) \longrightarrow U \setminus F_j$, where $U \subset \widehat V$ is a small neighborhood of a generic point of $F_j$ chosen such that $U\setminus F_j \simeq \Delta^*\times \Delta^{m-1}$. 
\end{comment}
\medskip

\begin{defi}
We let $\Delta_{\widehat{V}}$ be the covering divisor that Definition \ref{defcovering} associates to the data $(\widehat{V}, \; \widehat{V} \setminus F, \; f_{\widehat{V}} |_{\widehat{V} \setminus F})$.  We have $\Delta_{\widehat{V}} = \sum_i (1 - \frac{1}{n_i}) F_i$, for some $n_i \in \mathbb N^\ast \cup \{ \infty \}$.
\end{defi}

\noindent
Similarly to Lemma~\ref{formula}, one has
\begin{lem}
\label{formula2}
With the notation above, one has
$$K_Z=f_{\widehat V}^*(K_{\widehat V}+\Delta_{\widehat V}).$$
\end{lem}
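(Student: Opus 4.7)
The statement is the exact analog of Lemma~\ref{formula}, and the strategy follows the same Riemann--Hurwitz pattern. The plan is to compute the ramification of the cover $f_{\widehat V}\colon Z\to\widehat V$ over the codimension-one branch locus $F^{(1)}\subset F$ and show that this ramification divisor coincides with the pullback $f_{\widehat V}^\ast \Delta_{\widehat V}$.

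Since $f_{\widehat V}$ is \'etale outside $F^{(1)}$ (by construction, together with the purity of the branch locus), the whole computation reduces to the situation near the generic point of each component $F_i\subset F^{(1)}$. Accordingly, I would pick local coordinates $(z_1,\dots,z_m)$ on a polydisc $U\simeq\Delta^m$ centered at such a point, with $F_i=\{z_1=0\}$. By the very definition of $n_i$ from Definition~\ref{defcovering} (and exactly as in the computation \eqref{computationnorm} recalled in the proof of Theorem~\ref{logbig}), a connected component of $f_{\widehat V}^{-1}(U\setminus F_i)$ is isomorphic, as a cover of $U\setminus F_i$, either to $(w_1,\dots,w_m)\mapsto(w_1^{n_i},w_2,\dots,w_m)$ if $n_i<\infty$, or to the universal cover $(w_1,\dots,w_m)\mapsto(e^{(w_1+1)/(w_1-1)},w_2,\dots,w_m)$ if $n_i=\infty$.

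In the finite case, a direct computation yields $f_{\widehat V}^\ast(dz_1\wedge\cdots\wedge dz_m) = n_i\,w_1^{n_i-1}\,dw_1\wedge\cdots\wedge dw_m$, so the ramification contribution along the preimage of $F_i$ is $(n_i-1)[\{w_1=0\}]$, while $f_{\widehat V}^\ast F_i = n_i[\{w_1=0\}]$; therefore $f_{\widehat V}^\ast\bigl((1-\tfrac{1}{n_i})F_i\bigr) = (n_i-1)[\{w_1=0\}]$, which matches the ramification exactly. In the infinite case, the image of the cover never meets $\{z_1=0\}$, so the preimage of $F_i$ in $Z$ is empty and $f_{\widehat V}^\ast F_i = 0$ as a Weil divisor; simultaneously, $dz_1 = \tfrac{-2z_1}{(w_1-1)^2}\,dw_1$ shows that the pullback of $dz_1\wedge\cdots\wedge dz_m$ is nowhere vanishing, so there is no ramification contribution from $F_i$ either.

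Summing these local identities over the codimension-one components yields
\[
K_Z \;=\; f_{\widehat V}^\ast\bigl(K_{\widehat V}+\Delta_{\widehat V}\bigr)
\]
as an equality of $\mathbb Q$-Weil divisor classes, first in codimension one on the smooth locus of $Z$ and then globally since $Z$ is normal. The only mildly delicate point is the $n_i=\infty$ case, in which the convention $1-\tfrac{1}{\infty}=1$ on the divisorial side and the empty-preimage phenomenon on $Z$ must consistently cancel: the explicit local model above confirms that they do, and I expect this bookkeeping to be the main (small) obstacle in writing the proof cleanly.
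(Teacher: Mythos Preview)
The paper does not actually give a proof of this lemma: it is stated as the analog of Lemma~\ref{formula}, which in turn is presented as ``a direct consequence of the definition above.'' Your explicit Riemann--Hurwitz computation is precisely the argument that justifies this claim, and it is correct. The local models you use for the finite and infinite $n_i$ cases match those already invoked in \eqref{computationnorm}, and your handling of the $n_i=\infty$ case (empty preimage, nowhere-vanishing pullback of the volume form, hence no contribution on either side) is the right bookkeeping. The passage from the codimension-one identity on $Z_{\rm reg}$ to all of $Z$ via normality is also correct, since $K_Z$ is a Weil divisor class and $K_{\widehat V}+\Delta_{\widehat V}$ is $\mathbb Q$-Cartier on the smooth $\widehat V$.

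One small point worth making explicit in the finite case: you are implicitly using that when $n_i<\infty$, the relevant connected component of $f_{\widehat V}^{-1}(U\setminus F_i)$ actually extends inside $Z$ across $\{w_1=0\}$, so that the ramification divisor is nonempty there. This holds because a generic point of such an $F_i$ maps into $X$ (not into $D$), where $p$ is locally a finite quotient map; hence the fiber product is finite over $U$ and its normalization fills in the missing disk. This is routine, but since you went to the trouble of checking the $n_i=\infty$ case carefully, it would be consistent to say a word here too.
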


\subsection{The comparison result}
Our next goal is to relate the orbifold multiplicities given by the divisor $\Delta_{\widehat{V}}$ with the ones inherited from the pair $(\widehat{X}, \Delta_{\widehat{X}})$: this will be the content of Proposition \ref{propeffective}. Before this, we need a lemma.

\begin{lem}
\label{cartesian}
The variety $Z$ is naturally isomorphic to the normalization of the union of the components of $\widehat V \times_{\widehat X} Y$ dominating $\widehat V$. 
\end{lem}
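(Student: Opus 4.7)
The plan is to establish the isomorphism by combining associativity of fiber products with the universal property of normalization, after careful bookkeeping of the various dominant components.

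\medskip

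First, I would observe that the map $\widehat{V} \to V \to \overline{X}$ lifts to $\widehat{V} \to \widehat{X}$ through the component $V'$ of $V \times_{\overline{X}} \widehat{X}$ dominating $V$, so all the fiber products below make sense. Associativity then yields the chain
\[
\widehat{V} \times_{\widehat{X}} \bigl( \widehat{X} \times_{\overline{X}} \Omega \bigr) \;\cong\; \widehat{V} \times_{\overline{X}} \Omega \;\cong\; \widehat{V} \times_{V} \bigl( V \times_{X} \Omega \bigr) \;=\; \widehat{V} \times_{V} \widetilde{V},
\]
where in the middle identification I use that $\Omega \to \overline{X}$ factors through $X$, so fibering over $\overline{X}$ or over $X$ gives the same space.

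\medskip

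Next, I would compare the dominant components on both sides. Writing $W := \widehat{X} \times_{\overline{X}} \Omega$ for short, I claim that an irreducible component $C$ of $\widehat{V} \times_{\widehat{X}} W$ dominates $\widehat{V}$ if and only if its projection to $W$ lands in a component of $T_{\widehat{X}}$, i.e.\ in a component of $W$ dominating $\widehat{X}$. Indeed, the generic fiber of $\widehat{V} \times_{\widehat{X}} W \to \widehat{V}$ coincides with the fiber of $W \to \widehat{X}$ over the image of the generic point of $\widehat{V}$ (which lies in the open locus of $\widehat{X}$ over which $T_{\widehat X}$ is dense in $W$, since $q(V) \not\subset D\cup \mathrm{Sing}(p)$); a component of $\widehat V \times_{\widehat X} W$ meeting this generic fiber must therefore come from $T_{\widehat{X}}$. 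Under the isomorphism above, this identifies the dominant components of $\widehat{V} \times_{V} \widetilde{V}$ (that is, $T_{\widehat V}$ by definition) with the dominant components of $\widehat{V} \times_{\widehat{X}} T_{\widehat{X}}$.

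\medskip

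Finally, I would pass from $T_{\widehat X}$ to its normalization $Y$. The finite birational map $\nu : Y \to T_{\widehat X}$ induces a finite map
\[
\widehat{V} \times_{\widehat{X}} Y \;\longrightarrow\; \widehat{V} \times_{\widehat{X}} T_{\widehat{X}},
\]
which is an isomorphism above the Zariski open subset where $T_{\widehat{X}}$ is already normal; in particular it is bimeromorphic on every component dominating $\widehat{V}$. By the universal property of normalization of reduced complex spaces, the normalization of the union of the dominant components of $\widehat{V} \times_{\widehat{X}} Y$ therefore agrees with the normalization of the union of the dominant components of $\widehat{V} \times_{\widehat{X}} T_{\widehat{X}}$, which by the previous step agrees with $Z = T_{\widehat V}^{\nu}$. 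This will conclude the proof.

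\medskip

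The main obstacle I anticipate is the bookkeeping in the middle step: the various fiber products can have many irreducible components (especially since $\widetilde{V}$ has one per $\Gamma$-orbit), so one has to be scrupulous about keeping only the components dominating $\widehat V$, and verify that the isomorphism coming from associativity respects this selection. Once that is carefully done, the normalization step is automatic.
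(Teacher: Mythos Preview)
Your proposal is correct and follows essentially the same route as the paper: associativity of fiber products to identify $T_{\widehat V}$ with the dominant components of $\widehat V \times_{\widehat X} T_{\widehat X}$, followed by the universal property of normalization to pass from $T_{\widehat X}$ to $Y = T_{\widehat X}^{\nu}$. Your treatment of the dominant-component bookkeeping is in fact more explicit than the paper's, which simply asserts the correspondence without justification.
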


\begin{proof}
Note that the associativity of fiber products yields 
$$\widehat{V} \times_V \widetilde{V}= \widehat V \times_V  (V \times_{\overline X} \Omega) \simeq \widehat V \times_{\overline X} \Omega \simeq  \widehat V \times_{\widehat X}( \widehat X   \times_{\overline X} \Omega). $$
From this, we get that $T_{\widehat V}$, the disjoint union of the components of $\widehat{V} \times_V \widetilde{V}$ dominating $\widehat V$, identifies with the disjoint union of the components of $\widehat V \times_{\widehat X} T_{\widehat X}$ dominating $\widehat V$.
Now, the universal property of the normalization functor ${\bullet}^\nu$ allows us to complete the square as follows
$$
\begin{tikzcd}
 (\widehat V \times_{\widehat X} T_{\widehat X}^{\nu})^{\nu} \arrow{d} \arrow[dashed]{r} & (\widehat V \times_{\widehat X} T_{\widehat X})^{\nu} \arrow{d} \\
 \widehat V \times_{\widehat X} T_{\widehat X}^{\nu} \arrow{r} & \widehat V \times_{\widehat X} T_{\widehat X}
\end{tikzcd}
$$
Now, the dotted arrow represents a finite bimeromorphic map between two normal reduced complex analytic spaces hence it is an isomorphism. As $Y=T_{\widehat X}^{\nu}$, the normalization of the disjoint union of components of $\widehat V \times_{\widehat X}  Y$ dominating  $\widehat V$ is the same thing as the disjoint union of components of $ (\widehat V\times_{\widehat X} T_{\widehat X})^{\nu} $ dominating  $\widehat V$. By what was said previously, this is nothing but saying that $T_{\widehat V}^{\nu}=Z$. 
\end{proof}

\noindent
The natural orbifold structures on $\widehat{V}$ and $\widehat{X}$ are now comparable in the following manner.

\begin{prop} 
\label{propeffective} 
With the notation above, one has 
 $$\Delta_{\widehat{V}} \leq r^\ast \Delta_{\widehat X}, $$ \textsl{i.e.} the difference $r^\ast \Delta_{\widehat X}-\Delta_{\widehat{V}}$ between these two $\mathbb Q$-divisors is effective.
\end{prop}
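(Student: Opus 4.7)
The plan is to compare the two $\mathbb Q$-divisors coefficient by coefficient on $\widehat V$. Writing $c_{ij}:=\mathrm{ord}_{F_j}(r^\ast E_i)\geq 0$, the statement reduces to the numerical inequality $1-\tfrac{1}{n_j}\leq \sum_i c_{ij}(1-\tfrac{1}{m_i})$ for each irreducible component $F_j$ of $F^{(1)}$ (all other prime divisors $D$ on $\widehat V$ satisfy $\mathrm{ord}_D(\Delta_{\widehat V})=0$ while $\mathrm{ord}_D(r^\ast\Delta_{\widehat X})\geq 0$ by effectivity). First, if the generic point of $F_j$ is mapped by $r$ into $\widehat X\setminus E$, then $f_{\widehat X}:Y\to \widehat X$ is étale near that image, and Lemma~\ref{cartesian} identifies $Z$ locally with the normalized base change of an étale cover, so $f_{\widehat V}$ is étale at the generic point of $F_j$; hence $n_j=1$ and the inequality is trivial. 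Otherwise, by genericity, the image $r(\eta_{F_j})$ lies on exactly one stratum $E_{i_1}\cap\cdots\cap E_{i_k}$ of the SNC divisor $E$, and we proceed via a local analysis.

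Pick local coordinates $(z_1,\ldots,z_m)$ on $\widehat V$ with $F_j=(z_1=0)$, and $(w_1,\ldots,w_n)$ on $\widehat X$ with $E_{i_l}=(w_l=0)$ for $l=1,\ldots,k$. Since the local fundamental group $\mathbb Z^k$ of $\widehat X\setminus E$ around a generic NC point is abelian, the étale cover $f_{\widehat X}$ acquires the standard toroidal form $\zeta_l^{m_{i_l}}=w_l$ after normalization (with the obvious infinite-order analogue when $m_{i_l}=\infty$). Writing $r^\ast w_l = z_1^{c_l} u_l(z)$ with $u_l$ a unit and $c_l := c_{i_l, j}$, Lemma~\ref{cartesian} identifies $Z$ locally with the normalization of
$$\mathrm{Spec}\, \mathcal O_{\widehat V}[\zeta_1,\ldots,\zeta_k]/(\zeta_l^{m_{i_l}}-z_1^{c_l}u_l(z))_{l=1,\ldots,k}.$$
The monodromy of $f_{\widehat V}$ around a transverse loop to $F_j$ acts on the fiber by $\zeta_l\mapsto e^{2\pi i c_l/m_{i_l}}\zeta_l$, and the cardinality of any of its orbits — which coincides with the local degree $n_j$ on a connected component of $Z$ — equals $n_j=\lcm_l N_l$ where $N_l:=m_{i_l}/\gcd(c_l,m_{i_l})$.

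The remainder is elementary arithmetic. For each $l$ the bound $c_l\geq \gcd(c_l,m_{i_l})$ rearranges into $c_l(1-\tfrac{1}{m_{i_l}})\geq 1-\tfrac{1}{N_l}$; summing over $l$ and applying the classical Weierstrass inequality $\prod_l(1-a_l)\geq 1-\sum_l a_l$ with $a_l=1-1/N_l$ gives $\sum_l(1-\tfrac{1}{N_l})\geq 1-\tfrac{1}{\prod_l N_l}\geq 1-\tfrac{1}{n_j}$, the last step using $n_j=\lcm_l N_l\leq \prod_l N_l$; chaining yields the desired inequality. The case where some $m_{i_l}=\infty$ (so $n_j=\infty$) is immediate because then $c_l(1-1/m_{i_l})=c_l\geq 1=1-1/n_j$. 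The main obstacle is justifying the standard cyclic-product local shape of $f_{\widehat X}$ above a generic NC stratum of $E$, which relies on the abelian nature of the local monodromy together with the universal property of the normalization; the possibility that $r$ contracts $F_j$ (i.e., fails to be an immersion at its generic point) is harmless since only the vanishing orders $c_{i_l,j}$ enter the computation, and these are well-defined in any case.
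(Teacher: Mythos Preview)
Your overall strategy is the same as the paper's: localize near a general point of a component $F_j$, use Lemma~\ref{cartesian} to realize $Z$ as a normalized fibre product, and then bound the local degree $n_j$ of $f_{\widehat V}$ by a quantity built from the $m_{i_l}$'s, concluding with the elementary inequality $1-\tfrac{1}{\prod N_l}\le \sum(1-\tfrac{1}{N_l})$. The final arithmetic is correct, and your use of the vanishing orders $c_l$ (rather than just $c_l\ge 1$, which is what the paper uses) gives in principle a sharper bound.

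There is, however, one incorrect intermediate claim. You assert that near a generic NC point the connected cover $W_{\widehat X}\to U_{\widehat X}\setminus E$ is the ``standard toroidal'' cover $\zeta_l^{m_{i_l}}=w_l$, i.e.\ that its Galois group is $\prod_l \mathbb Z/m_{i_l}\mathbb Z$. This is not true in general: the local Galois group $H$ is an arbitrary quotient of $\mathbb Z^k$ in which the $l$-th generator has order $m_{i_l}$, so $H$ is only a \emph{quotient} of $\prod_l \mathbb Z/m_{i_l}\mathbb Z$ (for instance, with $k=2$ and $m_1=m_2=2$, one can have $H\simeq \mathbb Z/2\mathbb Z$ rather than $(\mathbb Z/2\mathbb Z)^2$). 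Consequently your formula $n_j=\lcm_l N_l$ is not an equality but only an upper bound: the image of $(c_1,\dots,c_k)$ in $H$ has order \emph{dividing} its order in $\prod_l\mathbb Z/m_{i_l}\mathbb Z$. The same caveat applies to the infinite case: some $m_{i_l}=\infty$ does not force $n_j=\infty$ (e.g.\ $H=\mathbb Z$ via $(a,b)\mapsto a-b$ with $c=(1,1)$). Fortunately this does not damage the proof, since your inequality $1-\tfrac{1}{n_j}\le \sum_l c_l(1-\tfrac{1}{m_{i_l}})$ only requires the upper bound $n_j\le \lcm_l N_l$ (respectively the trivial $1-\tfrac{1}{n_j}\le 1$ in the infinite case), and everything else goes through unchanged.

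The paper's proof sidesteps this issue by arguing more coarsely: it shows $n_j\le |H|$ directly from the fibre-product description, then invokes \cite[Theorem~2.23]{kol07} to get $|H|\le \prod_l m_{i_l}$, and finishes with the same arithmetic using only $c_l\ge 1$. Your route is more explicit and potentially sharper, but you should replace the equality $n_j=\lcm_l N_l$ by the divisibility $n_j\mid \lcm_l N_l$ and drop the claim about the explicit toroidal shape of $f_{\widehat X}$.
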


\begin{proof} 
We have the following commutative diagram:
$$
\begin{tikzcd}[row sep=tiny, column sep = small]
 & \widehat V \arrow[dotted]{dd} \arrow{rr}{r} & &\widehat X  \arrow{dd}\\
Z \arrow{ur}{f_{\widehat V}} \arrow{dd}  \arrow{rr}[near end]{s} & & Y \arrow{dd} \arrow{ur}[near start]{f_{\widehat X}} & \\
 & V \arrow[dotted]{rr} & & \overline{X}. \\
\widetilde{V} \arrow{ur} \arrow{rr} & & \Omega\arrow{ur} &
\end{tikzcd}
$$

We claim that over $\widehat V \setminus F$, we have $Z\simeq \widehat V \times_{\widehat X} Y$. Given Lemma~\ref{cartesian}, it is sufficient to prove that $Z|_{f_{\widehat V}^{-1}(\widehat V \setminus F)}$ is smooth and that each of its connected components dominates $\widehat V$. As $\sigma$ is an isomorphism over $\widehat V \setminus F$, it suffices to check those properties for $\widetilde V \to V$ over that same set but this is then straightforward.

Let $\eta$ be a general point of a component $F_i$ of $F$ such that $2 \le n_i \le+\infty$ and let $U_{\widehat V} \subset \widehat{V}$ be a small neighborhood of $\eta$ on which $F_i$ admits the equation $v_i = 0$. Let $W_{\widehat V}$ be a connected component of $f_{\widehat V}^{-1}(U_{\widehat V} \setminus F_i)$. By Definition \ref{defcovering}, the map $W_{\widehat V} \overset{f_{\widehat V}}{\longrightarrow} U_{\widehat V} \setminus F_i$ is an \'etale cover, with Galois group $G_i = \quotientd{\mathbb Z}{n_i \mathbb Z}$. As $n_i\ge 2$, $F_i$ sits above $\mathrm{Branch}(p)$ as otherwise, $f_{\widehat V}: f_{\widehat V} \to U_{\widehat V}\simeq \Delta^m$ is étale hence an isomorphism. 

Set $\zeta = r(\eta)$, and let $U_{\widehat X}$ be a small neighborhood of $\zeta$ containing $r(U_{\widehat V})$. Denote also by $W_{\widehat X}$ the connected component of $f_{\widehat X}^{-1}( U_{\widehat X} \setminus E)$ containing $s(W_{\widehat V})$. We get a map of \'{e}tale covers
\begin{equation}
\begin{tikzcd}
 \label{diagcovers}
W_{\widehat V} \arrow{r}{s} \arrow[d,"f_{\widehat V}"'] & W_{\widehat X} \arrow{d}{f_{\widehat X}} \\
U_{\widehat V} \setminus F_i \arrow{r}{r} & U_{\widehat X} \setminus E 
\end{tikzcd}
\end{equation}
and we know from our observation at the beginning of the proof that the diagram 
\begin{equation}
\begin{tikzcd}
 \label{diagcovers2}
s^{-1}(W_{\widehat X}) \arrow{r}{s} \arrow[d,"f_{\widehat V}"'] & W_{\widehat X} \arrow{d}{f_{\widehat X}} \\
U_{\widehat V} \setminus F_i \arrow{r}{r} & U_{\widehat X} \setminus E 
\end{tikzcd}
\end{equation}
is a fiber product. Therefore, we have
\begin{align*}
n_i&=\mathrm{deg}\left(W_{\widehat V} \overset{f_{\widehat V}}{\longrightarrow} U_{\widehat V} \setminus F_i \right)\\
&\le  \#(\mbox{fiber of } \,s^{-1}(W_{\widehat X})  \overset{f_{\widehat V}}{\longrightarrow} U_{\widehat V} \setminus F_i )\\
& =  \mathrm{deg}\left(W_{\widehat X}  \overset{f_{\widehat X}}{\longrightarrow} U_{\widehat V} \setminus F_i \right)
\end{align*}
If $H$ is the Galois group of $f_{\widehat X}:W_{\widehat X} \longrightarrow U_{\widehat X} \setminus E$, we have $n_i \leq |H|$. Let $(E_j)_{j \in J}$ be the components of $E$ passing through $\zeta$. Since $m_j$ is the order of the element of $H$ associated to the meridian loop around $E_j$, the proof of \cite[Theorem 2.23]{kol07} shows that $H$ is an abelian group satisfying $$|H| \leq \prod_{j\in J} m_j.$$ 

Given $j\in J$, let us introduce $z_j$ a local equation for $E_j$.
Since $v_i$ divides each $r^\ast z_j$ in $\mathcal O_\eta$, to finish the proof, it suffices to show that $1 - \frac{1}{n_i} \leq \sum_{j \in J} \left( 1 - \frac{1}{m_j} \right)$. But this is now an easy consequence of the inequality $n_i \leq \prod_{j \in J} m_j$ obtained previously. 
\end{proof}

\section{A criterion for hyperbolicity}
\label{criterion}

The main goal of this section is to present a hyperbolicity result for the complex space $\overline{X}$, provided that the manifold $\Omega$  in the general Assumption \ref{generalassumption} is actually a bounded domain. The section is organized as follows: 

\noindent
$\bullet$ In Section~\ref{sec:Bergman}, we give some complements on the Bergman metric and how to compute its curvature, cf. \eqref{eqcurvature}.  

\noindent
$\bullet$ In Section~\ref{sec:ineq}, we gather a few results allowing us to estimate the curvature of the Bergman kernel on $\Omega$, close to the classical comparison theorems between the Bergman, Carath\'eodory and Kobayashi metrics (see \cite{hah78, kob98}). The main result of that section is Proposition~\ref{lemcontcurvfin}. 

\noindent
$\bullet$ In Section~\ref{sectsingmetorb}, we build on the previous section to construct a singular Hermitian metric on a modification of a weakly pseudoconvex Kähler manifold
 $V$ admitting a generic immersive map to $\overline X$. The main result of the section is a curvature inequality for  that metric, cf. Theorem~\ref{eqcontrolcurv}.
 
\noindent
$\bullet$ In Section~\ref{sec:criterion}, we exploit the previous results to state and prove a hyperbolicity criterion for $\overline X$, cf. Theorem~\ref{thmsingmetric2}.\\

 Throughout the rest of this section, we assume that $\Omega \subset \mathbb C^n$ is a bounded domain.

\subsection{Computation of the curvature of the Bergman metric}
\label{sec:Bergman}
Let $Y$ be a complex manifold of dimension $n$. Let us give some complements on the discussion of Section \ref{subsectbergmanmetric}, and briefly recall how to compute the curvature $i\Theta(h_Y)$ of the Bergman metric $h_Y$ on $K_Y$ (when it is defined).

Let $\mathcal H_Y = \left\{ \sigma \in H^0(Y,K_Y) \; | \; \int_Y i^{n^2} \sigma \wedge \overline{\sigma}<+\infty \right\}$ be the Hilbert space of holomorphic square integrable $n$-forms on $Y$. If $e$ is some local trivialization of $K_Y$, the norm of $e$ for the metric $h_Y$ has the following value at a point $x \in Y$: 
$$
\norm{e}_{h_Y, x} = \frac{1}{\norm{\mathrm{ev_x}}_{\mathcal H_Y^\ast}},
$$
where $\mathrm{ev_x}: \mathcal H_Y \longrightarrow \mathbb C$ is the evaluation form which to $\sigma \in \mathcal H_Y$ associates $\lambda$ such that $\sigma_x = \lambda e_x$, and $\norm{ \cdot }_{\mathcal H_Y^\ast}$ is the natural dual norm on $\mathcal H_Y^\ast$. Thus, the Bergman metric at $x$ is well defined provided there exists $\sigma \in \mathcal H_Y$ such that $\sigma_x \neq 0$.
\medskip

Consider now a point $x \in Y$ such that $\norm{ \cdot}_{h_Y, x}$ is defined. By definition of $h_Y$, there exists a section $e \in \mathcal H_Y$ such that $\norm{e}_{\mathcal H_Y} = 1$ and $\norm{e_x}_{h_Y, x} = 1$. Now, if $v \in T_{Y, x}$, the curvature of $h_Y$ in the direction $v$ can be computed by the following formula (see \cite[Proposition 4.10.10]{kob98}). 
\begin{equation} \label{eqcurvature}
i\Theta(h_Y) (v, \overline{v}) = \max_{\sigma} \abs{df(v)}^2,
\end{equation}
where $\sigma \in \mathcal H_Y$, with $\norm{\sigma}_{\mathcal H_Y} = 1$ and $\sigma(x) = 0$, and $f\in \mathfrak m_{Y,x} \subset \mathcal O_{Y,x}$ is such that locally around $x$ one has $\sigma= f e$ (recall that $e_x\ne 0$, so that $e$ gives a local holomorphic frame around $x$).

%where $f\in \mathfrak m_{Y,x} \subset \mathcal O_{Y,x}$ runs among all germs of holomorphic functions at $x$ such that there exists $\sigma \in \mathcal H_Y$ with $\norm{\sigma}_{\mathcal H_Y} = 1$, $\sigma(x) = 0$ and $\sigma \overset{loc}{=} f e$.

\subsection{Curvature inequalities on subvarieties}
\label{sec:ineq}

We will now use the previous description of the curvature of $h_Y$ to state a comparison result between the curvature of the Bergman metric of a bounded domain and that of a bounded \emph{symmetric} domain included in it. We will then use this result to obtain a curvature estimate for the subvarieties of $\Omega$. 

Let $\mathcal D$ be a bounded \emph{symmetric} domain of dimension $n$, centered at $0 \in \mathbb C^n$, with coordinates $(t_1, ..., t_n)$. Since $\mathcal D$ is $S^1$-invariant, we see immediately that two polynomials $t^\alpha = t_1^{\alpha_1} ... t_n^{\alpha_n}$ ($\alpha = (\alpha_1, ..., \alpha_n)$) and $t^\beta = t_1^{\beta_1} ... t_n^{\beta_n}$ ($\beta = (\beta_1, ..., \beta_n)$) are orthogonal for the standard scalar product, whenever $\alpha \neq \beta$. After renormalizing the family $(t^\alpha \, dt_1 \wedge ... \wedge dt_n)_{\alpha \in \mathbb N^n}$, we get a unitary basis $(e_i)_{i \in \mathbb N}$ of $\mathcal H_{\mathcal D}$, of the form $e_i = f_i dt_1 \wedge ... \wedge dt_n$, with
$$
f_0 = \frac{1}{\mathrm{Vol}(\mathcal D)^{\frac{1}{2}}}, \; \; f_1 = \frac{1}{a_1} t_1, \;\; ..., \;\; f_n = \frac{1}{a_n} t_n,
$$
where $a_i^2 = \int_{\mathcal D} |t_i|^2 \mathrm{dVol}$, all other $f_i$ being polynomials in $t$ with vanishing $1$-jet at $0$.

This implies that
$$
\norm{dt_1 \wedge ... \wedge dt_n }^2_{h_{\mathcal D, 0}} = \frac{1}{\sum_{i \in \mathbb N} |f_i(0)|^2} = \mathrm{Vol}(\mathcal D),
$$

Let $v \in T_{\mathcal D, 0}$. Taking $e = e_0$, the equality case in Cauchy-Schwarz inequality shows that the maximum in \eqref{eqcurvature} is attained for $\sigma = f e$ with 
\begin{equation}
\label{f}
f = \frac{\sum_i \overline{v_i} t_i/a_i^2 }{(\sum_i |v_i|^2/a_i^2)^{1/2}}\cdotp \mathrm{Vol}(\mathcal D)^{1/2}.
\end{equation}
This yields, by \eqref{eqcurvature}:
\begin{equation} \label{curvboundeddomain}
i \Theta(h_{\mathcal D}) (v, \overline{v}) = \left(\sum_{i} \frac{|v_i|^2}{a_i^2} \right) \cdotp \mathrm{Vol}(\mathcal D)
\end{equation}

We are now ready to state our first comparison result.

\begin{lem} \label{lemcontrol1}
Let $x \in \Omega$. Let $j: \mathcal D \hookrightarrow \Omega$ be an open embedding, such that $j(0) = x$. Then, we have
$$
j^\ast \left( i \Theta(h_{\Omega})_x \right) \leq \frac{\mathrm{Vol}(\Omega)}{\mathrm{Vol}(\mathcal D)} \frac{1}{|\mathrm{Jac} (j)(0)|^2}  i \Theta(h_{\mathcal D})_0.
$$
\end{lem}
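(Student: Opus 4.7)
The proof will hinge on a convenient reformulation of the variational formula \eqref{eqcurvature}. Taking $e = dz_1 \wedge \dots \wedge dz_n$ as a local frame for $K_\Omega$ near $x$ and writing each $\sigma \in \mathcal H_\Omega$ as $\sigma = \phi \, dz$ with $\phi$ holomorphic, a direct unwinding of \eqref{eqcurvature} (equivalently, a twofold differentiation of $\log K_\Omega(z,z)$) shows that
\[
i\Theta(h_\Omega)_x(w,\bar w) \;=\; \norm{dz_1 \wedge \dots \wedge dz_n}_{h_\Omega, x}^2 \cdot \max_{\phi} \abs{\partial_w \phi (x)}^2,
\]
where the supremum runs over holomorphic functions $\phi$ on $\Omega$ with $\phi(x) = 0$ and $\norm{\phi \, dz}_{\mathcal H_\Omega} = 1$. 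The analogous identity holds on $\mathcal D$, with the additional explicit value $\norm{dt_1 \wedge \dots \wedge dt_n}_{h_\mathcal D, 0}^2 = \mathrm{Vol}(\mathcal D)$ already established in the discussion above.

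The heart of the proof is the transfer of test functions from $\Omega$ to $\mathcal D$. Given an admissible $\phi$ on $\Omega$, I would set $\tilde\phi := (\phi \circ j) \cdot \mathrm{Jac}(j)$ on $\mathcal D$, which is precisely the coefficient of $j^\ast(\phi\, dz)$ in the frame $dt_1 \wedge \dots \wedge dt_n$. The change of variables formula yields
\[
\norm{\tilde\phi \, dt}_{\mathcal H_\mathcal D}^2 \;=\; \int_{j(\mathcal D)} \abs{\phi}^2 \, i^{n^2} dz \wedge \overline{dz} \;\le\; 1,
\]
and the vanishing $\phi(x) = 0$ ensures both $\tilde\phi(0) = 0$ and, by killing the product rule term bearing on $\phi$ itself, $d\tilde\phi(0)(v) = \mathrm{Jac}(j)(0) \cdot d\phi(x)(dj(v))$. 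Normalizing $\tilde\phi$ to unit norm exhibits a valid competitor in the variational problem on $\mathcal D$; taking suprema over all admissible $\phi$ then produces
\[
\frac{i\Theta(h_\Omega)_x(dj(v), \overline{dj(v)})}{\norm{dz_1 \wedge \dots \wedge dz_n}_{h_\Omega, x}^2} \cdot \abs{\mathrm{Jac}(j)(0)}^2 \;\le\; \frac{i\Theta(h_\mathcal D)_0(v,\bar v)}{\mathrm{Vol}(\mathcal D)}.
\]

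It remains only to bound $\norm{dz_1 \wedge \dots \wedge dz_n}_{h_\Omega, x}^2 \le \mathrm{Vol}(\Omega)$. For this, I would test against the constant section $\sigma_0 := \mathrm{Vol}(\Omega)^{-1/2} \, dz_1 \wedge \dots \wedge dz_n$, which, since $\Omega$ is bounded, lies in $\mathcal H_\Omega$ with unit $L^2$-norm; the general Cauchy--Schwarz inequality $\norm{\sigma(x)}_{h_\Omega}^2 \le \norm{\sigma}_{\mathcal H_\Omega}^2$ valid for any $\sigma \in \mathcal H_\Omega$ then yields the claimed bound. Substituting into the previous display produces exactly the conclusion of the lemma. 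The whole argument is really a pulled-back incarnation of the $L^2$-restriction principle $\norm{j^\ast \sigma}_{\mathcal H_\mathcal D} \le \norm{\sigma}_{\mathcal H_\Omega}$; the only mildly delicate aspect is the careful bookkeeping of Jacobian and volume factors through the variational formulas, but no genuine obstacle arises there.
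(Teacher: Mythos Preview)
Your proof is correct and follows essentially the same approach as the paper's: both arguments pull back an extremal section from $\mathcal H_\Omega$ to obtain a competitor in the variational problem \eqref{eqcurvature} on $\mathcal D$, use the change-of-variables identity $j^\ast(\phi\,dz) = (\phi\circ j)\,\mathrm{Jac}(j)\,dt$ together with $\phi(x)=0$ to control the derivative, and invoke the constant section $\mathrm{Vol}(\Omega)^{-1/2}\,dz$ to bound $\norm{dz}_{h_\Omega,x}^2 \le \mathrm{Vol}(\Omega)$. Your upfront reformulation of \eqref{eqcurvature} in terms of the fixed frame $dz$ is exactly the paper's identity $i\Theta(h_\Omega)_x(v,\bar v) = |dg(v)|^2/|g_0(x)|^2$ rewritten via $|g_0(x)|^2 = \norm{dz}_{h_\Omega,x}^{-2}$, so the two presentations differ only cosmetically.
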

\begin{proof}
Let $w \in T_{\mathcal D, 0}$, and let $v = j_\ast(w)$. We are going to show that the inequality holds when applied to $w$. 

We first gather a few objects allowing us to compute the left hand side. Accordingly to \eqref{eqcurvature}, we let $e, \sigma \in \mathcal H_{\Omega}$ be such that $\norm{e}_{\mathcal H_{\Omega}} = \norm{\sigma}_{\mathcal H_{\Omega}} = 1$ and $\norm{e_x}_{h_{\Omega, x}} = 1$, $\sigma(x) = 0$, and we finally require that $i \Theta(h_{\Omega}) (v, \overline{v}) = |df(v)|^2$, where $\sigma \overset{loc}{=} f e$ near $x$. Writing $\sigma = g \,dz_1\wedge \ldots \wedge dz_n$, and $e = g_0 \, dz_1 \wedge \ldots \wedge dz_n$, we get the alternate expression
\begin{equation} \label{eqexpraltcurv}
i \Theta(h_{\Omega})_x (v, \overline{v}) = \frac{|dg(v)|^2}{|g_0(x)|^2}.
\end{equation}
Remark that since $\int_{\Omega} \frac{d\mathrm{vol}_{\mathbb C^n}}{\mathrm{Vol}(\Omega)} = 1$, we must have 
\begin{equation} \label{eqineqg0}
|g_0(x)|^2 \geq \frac{1}{\mathrm{Vol(\Omega)}}
\end{equation}
since $g_0$ realizes the supremum of the evaluation function at $x$ on $B(0,1) \subset \mathcal H_Y$.
\medskip

To compute the right hand side, remark first that
$$
j^\ast \sigma = \mathrm{Vol}(\mathcal D)^{1/2} \, (g \circ j) \, \mathrm{Jac}(j) \, \left[ \frac{dt_1 \wedge ... \wedge dt_n}{\mathrm{Vol}(\mathcal D)^{1/2}} \right].
$$
Denote by $e_{\mathcal D}$ the term between brackets. We have seen previously that $\norm{e_{\mathcal D}}_{\mathcal H_{\mathcal D}} = \norm{e_{\mathcal D, 0}}_{h_{\mathcal D}} = 1$. Moreover, since $j$ is an open immersion, we have $\norm{j^\ast \sigma}_{\mathcal H_{\mathcal D}} \leq 1$.

These two facts allow us to use \eqref{eqcurvature} to bound  the curvature of $h_{\mathcal D}$ from below, writing $j^\ast \sigma = f_{\mathcal D} e_{\mathcal D}$, with $f_{\mathcal D} = \mathrm{Vol}(\mathcal D)^{1/2} \, (g \circ j) \, \mathrm{Jac}(j)$. We get 
\begin{align*}
i \Theta(h_{\mathcal D})(w, \overline{w}) & \geq |df_{\mathcal D} (w)|^2 \\
& \geq | d( \mathrm{Vol}(\mathcal D)^{1/2} \;  (g \circ j) \; \mathrm{Jac}(j)) \cdot w|^2 \\
&  = \mathrm {Vol}(\mathcal D) \; |\mathrm{Jac}(j)(0)| \; | d(g \circ j)(w))|^2  \\
& \geq \frac{\mathrm{Vol(\mathcal D)}}{\mathrm{Vol}(\Omega)} \; | \mathrm{Jac}(j)(0) |^2 \; \frac{|dg(v)|^2}{|g_0(x)|^2}
\end{align*}
where at the second line, we used the fact that $g(x) = 0$, and at the last line, we used \eqref{eqineqg0}. The last equation, combined with \eqref{eqexpraltcurv}, allows us to end the proof.
\end{proof}

\begin{rem} In particular, if $r = d(x, \partial \Omega)$, we can apply the previous lemma to the open embedding of the ball $B(x, r) \hookrightarrow \Omega$, with $j = {\mathrm{Id}}$. This gives, for any $v \in T_{\Omega, x}$:
\begin{align*}
i \Theta(h_{\Omega})_x (v, \overline{v}) & \leq \frac{\mathrm{Vol}(\Omega)}{\mathrm{Vol}(B(x, r))} i \Theta(h_{B(x, r)}) (v, \overline{v}) \\
 & = \frac{\mathrm{Vol}(\Omega)}{\mathrm{Vol}(B(x,r))^2}\cdotp \frac{n+1}{r^2} \norm{v}^2_{\mathbb C^n}
\end{align*}
using \eqref{curvboundeddomain} and the fact that for $\mathcal D = B(x, r)$, we have $a_i^2 = \mathrm{Vol}(B(x,r))\cdotp \frac{r^2}{n+1}$ for any $i$. 
\end{rem}

The next lemma will be used later on to estimate the curvature of the Bergman metric on subvarieties of $\Omega$.

\begin{lem} \label{lemcontrolcurvature1} Assume that $0 \in \mathcal D \subset \Omega$, and that $\mathcal D$ is centered at $0$. Let $Y$ be a complex manifold, and let $Y \overset{q}{\longrightarrow} \Omega$ be a generically immersive holomorphic map passing through $0$. Choose $y \in q^{-1}(0)$, and let $d_{\Omega,0} = \max_{z \in \partial \Omega} d(0, z)$.

Suppose that $h_Y$ is defined at $y$. Then, we have:
$$
i \Theta(h_Y)_y \geq \frac{ \min_i a_i^2}{ \mathrm{Vol}(\Omega)\, d_{\Omega, 0}^2}  q^*(i \Theta(h_\Omega)_0).
$$
\end{lem}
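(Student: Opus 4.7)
The plan is to establish the estimate pointwise on tangent vectors via the variational characterization of the Bergman curvature given in \eqref{eqcurvature}. Fix $v \in T_{Y,y}$, set $w = q_* v \in T_{\Omega,0} \simeq \mathbb C^n$, and let $e_Y \in \mathcal H_Y$ be a unit vector realizing $\|e_{Y,y}\|_{h_Y} = 1$ (available by assumption). The goal is to exhibit a specific competitor in \eqref{eqcurvature} that yields the desired lower bound.

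The core construction is to pull back, via $q$, the linear form on $\mathbb C^n$ whose coefficients are exactly those singled out by the Cauchy--Schwarz equality case in the computation of $i\Theta(h_{\mathcal D})_0$ performed at \eqref{f}: namely, I would set $\ell(t) := \sum_i \bar w_i t_i / a_i^2$ and consider the global holomorphic section $\sigma_Y := (\ell \circ q) \cdot e_Y$ of $K_Y$. Since $q(y) = 0$ and $\ell(0) = 0$, $\sigma_Y$ vanishes at $y$; writing $\sigma_Y = f \cdot e_Y$ with $f = \ell \circ q$, a direct differentiation at $y$ gives $df(v) = d\ell(0) \cdot w = \sum_i |w_i|^2/a_i^2$.

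The $L^2$ norm of $\sigma_Y$ is controlled purely by the boundedness of $\Omega$: the maximum of $|t|$ over $\overline{\Omega}$ is attained on the boundary, so $|t| \leq d_{\Omega,0}$ on $\Omega$, and Cauchy--Schwarz yields
\[
|\ell(t)|^2 \;\leq\; d_{\Omega,0}^2 \sum_i \frac{|w_i|^2}{a_i^4} \;\leq\; \frac{d_{\Omega,0}^2}{\min_i a_i^2} \sum_i \frac{|w_i|^2}{a_i^2},
\]
so that $\|\sigma_Y\|_{\mathcal H_Y}^2 \leq (d_{\Omega,0}^2/\min_i a_i^2) \sum_i |w_i|^2/a_i^2$ since $\|e_Y\|_{\mathcal H_Y} = 1$. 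Feeding the normalization $\sigma_Y/\|\sigma_Y\|_{\mathcal H_Y}$ into \eqref{eqcurvature} then produces
\[
i\Theta(h_Y)_y(v, \bar v) \;\geq\; \frac{\min_i a_i^2}{d_{\Omega,0}^2} \sum_i \frac{|w_i|^2}{a_i^2}.
\]

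To conclude, I would recognize, thanks to \eqref{curvboundeddomain}, that $\sum_i |w_i|^2/a_i^2 = i\Theta(h_{\mathcal D})_0(w, \bar w)/\mathrm{Vol}(\mathcal D)$, and apply Lemma \ref{lemcontrol1} to the open embedding $\mathcal D \hookrightarrow \Omega$ (whose Jacobian is identically $1$) to bound $i\Theta(h_{\mathcal D})_0$ from below by $(\mathrm{Vol}(\mathcal D)/\mathrm{Vol}(\Omega)) \, i\Theta(h_\Omega)_0$. The volume factors cancel, and reading the result as a pullback along $q$ gives exactly the claimed inequality. There is no serious obstacle to overcome: the entire argument is organized around the single insight that the optimal test section on the symmetric model $\mathcal D$ can be transplanted to $Y$ by a simple linear pullback, after which the bounded geometry of $\Omega$ furnishes everything else.
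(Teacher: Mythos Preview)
Your argument is correct and follows essentially the same route as the paper's proof: both construct the competitor section by multiplying the peak section $e_Y$ by the pullback of the linear form $\ell$ coming from the Cauchy--Schwarz extremal \eqref{f}, bound its $L^2$ norm via the sup-norm of $\ell$ on $\Omega$, and then invoke \eqref{curvboundeddomain} together with Lemma~\ref{lemcontrol1} (for $j=\mathrm{Id}$) to pass from $i\Theta(h_{\mathcal D})_0$ to $i\Theta(h_\Omega)_0$. The only cosmetic difference is that the paper normalizes the linear form in advance (so that the test section has $L^2$ norm $\leq 1$ from the start), whereas you normalize $\sigma_Y$ at the end; the resulting bounds are identical.
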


\begin{proof}
Fix a vector $v \in T_{Y, y}$, and let $w = q_\ast (v)$. We want to show that the inequality holds when applied to $v$. We may suppose that $w \neq 0$, the inequality being trivial otherwise.

Since $h_Y$ is defined at $y$, there exists $\eta \in \mathcal H_Y$ such that $\norm{\eta}_{\mathcal H_Y} = 1$, and $\norm{\eta_y}_{h_Y, y} = 1$. Besides, by \eqref{f} and \eqref{curvboundeddomain}, we have
$$
i \Theta(h_{\mathcal D}) (w, \overline{w}) = |df(w)|^2,
$$
with $f(z) = \frac{\sum_i \overline{w_i} z_i/a_i^2 }{(\sum_i |w_i|^2/a_i^2)^{1/2}}\cdotp \mathrm{Vol}(\mathcal D)^{1/2}$. Note that by Cauchy-Schwarz inequality, we get the following upper bound: 
$$
|f(z)|^2 \leq  \left( \sum_{i} \frac{|z_i|^2}{a_i^2} \right) \mathrm{Vol}(\mathcal D) \leq \frac{d_{\Omega, 0}^2}{\min_i a_i^2} \mathrm{Vol}(\mathcal D).
$$
Define 
$$
\begin{aligned}
g\colon\, &\mathbb C^n\longrightarrow \mathbb C \\
& z\longmapsto \frac{\min_i a_i }{\mathrm{Vol}(\mathcal D)^{1/2} {d_{\Omega, 0}}} f(z).
\end{aligned}
$$ 
Then, we have $\sup_Y |g \circ q | \leq 1$, so $\sigma = (g \circ q) \eta \in \mathcal H_Y$, and by \eqref{eqcurvature}, we get
\begin{align*}
i \Theta(h_Y)(v, \overline{v}) & \geq |d(g \circ q)(v)|^2 \\
			       & \geq \frac{\min_i a_i^2}{\mathrm{Vol} (\mathcal D) d_{\Omega, 0}^2} |df(w)|^2 
\end{align*}
This shows that $i \Theta(h_Y)_y \geq \frac{\min_i a_i^2}{\mathrm{Vol}(\mathcal D)d_{\Omega, 0}^2} i q^\ast \Theta(h_{\mathcal D})_0$. Using Lemma \ref{lemcontrol1} with $j = \mathrm{Id}_{\mathbb C^n}$, we see that $i \Theta(h_{\mathcal D})_0 \geq \frac{\mathrm{Vol}(\mathcal D)}{\mathrm{Vol}(\Omega)} i \Theta(h_\Omega)_0$. This ends the proof.
\end{proof}

%\begin{rem} When $\Omega$ is itself a bounded symmetric domain, we can take $\mathcal D = \Omega$. Up to scaling $\mathbb C^n$ linearly, we can moreover assume that $a_i = 1$ for each $i$. Then, using the polydisk theorem (see \cite[Theorem 1, Chapter 5]{mok89}), we easily see that $- \gamma = - \frac{1}{\mathrm{Vol}(\Omega) d_{\Omega, 0}^2}$ is an upper bound for the holomorphic sectional curvature of the curvature form of the Bergman metric on $T_{\Omega}$. In this situation, $\Omega$ is homogeneous, so the previous lemma gives
%$$
%i \Theta(h_Y) \geq \gamma \, q^\ast \left( i \Theta(h_{\Omega}) \right)
%$$ 
%at any point where $h_Y$ is defined.
%\end{rem}

We now make the following regularity assumption on the bounded domain $\Omega$.

\begin{assumption}  \label{funddomainassumption}
The manifold $\Omega$ is a bounded domain admitting a \emph{cocompact} discrete subgroup $\Gamma_{0} \subset \mathrm{Aut}(\Omega)$. Let $\mathfrak{U}_0 \subset \Omega$ be a compact fundamental domain for $\Gamma_{0}$, and let $r_0 = d(\mathfrak{U}_0, \partial \Omega)$, $d_0 = \max_{x \in \mathfrak{U}_0, z \in \partial \Omega} d(x, z)$. 
\end{assumption}

Under this assumption, we can obtain a uniform bound in Lemma \ref{lemcontrolcurvature1}, in terms of some constant depending on $\Gamma_0$. 

\begin{defi}
Under the hypothesis of Assumption \ref{funddomainassumption}, we introduce the following constant
$$
C_0 = \frac{1}{d_0^2 \, \mathrm{Vol}(\Omega)} \sup_{x \in \mathcal D \subset \Omega} \left( \mathrm{min}_i \; a_i^2  \right),
$$
where $x$ runs among the points of $\mathfrak{U}_0$, $\mathcal D$ runs among the bounded symmetric domains centered at $x$ and included in $\Omega$, and the $a_i$ are the constants associated to $\mathcal D$.
\end{defi}

Remark that since $d(\mathfrak{U}_0, \partial \Omega) = r_0 > 0$, we can always take $\mathcal D = B(x, r_0)$ in the previous definition. Then, an easy computation shows that
$$
C_0 \geq \frac{1}{n+1} \frac{\mathrm{Vol}(B(0, r_0))}{\mathrm{Vol}(\Omega)} \frac{r_0^2}{d_0^2}.
$$
Note that we also have the trivial upper bound $d_0\le \mathrm{diam}(\Omega)$. 
\begin{prop} 
\label{lemcontcurvfin}
Let $Y$ be a complex manifold, and let $Y \overset{q}{\longrightarrow} \Omega$ be a generically immersive holomorphic map. Suppose that $h_Y$ is well defined at a generic point of $Y$. Then, we have
$$
i \Theta(h_Y) \geq C_0\, q^\ast ( i\Theta(h_\Omega))
$$
in the sense of currents.
\end{prop}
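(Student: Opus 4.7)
The plan is to prove the inequality pointwise at any $y \in Y$ where the Bergman metric $h_Y$ is well-defined (the hypothesis guarantees such points form a non-empty Zariski open subset of $Y$), and then promote it to an inequality of currents on all of $Y$ via continuity of both sides on this open locus. Fix such a $y$ and set $x_0 := q(y) \in \Omega$.

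The key observation is to exploit the invariance of the Bergman metric $h_\Omega$ under the whole automorphism group $\mathrm{Aut}(\Omega)$, in particular under the cocompact subgroup $\Gamma_0$. Indeed, $h_Y$ depends only on the complex manifold $Y$ (not on the map $q$), while $q^*(i\Theta(h_\Omega))$ is unchanged when $q$ is replaced by $\gamma \circ q$ for any $\gamma \in \mathrm{Aut}(\Omega)$. Choosing $\gamma \in \Gamma_0$ such that $\gamma(x_0) \in \mathfrak{U}_0$ and replacing $q$ with $\gamma \circ q$, we may therefore assume $x_0 \in \mathfrak{U}_0$. At this point I would apply a mild variant of Lemma~\ref{lemcontrolcurvature1} in which the bounded symmetric subdomain $\mathcal D$ is allowed to be centered at any point of $\Omega$ rather than the origin: the proof of that lemma carries over essentially verbatim after a translation of coordinates in $\mathbb C^n$ (which preserves Lebesgue measure and hence the $L^2$-structure used to define the unitary basis of $\mathcal H_{\mathcal D}$ and the constants $a_i$). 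For any bounded symmetric domain $\mathcal D \subset \Omega$ centered at $x_0$, this yields
$$i\Theta(h_Y)_y \;\geq\; \frac{\min_i a_i^2}{\mathrm{Vol}(\Omega)\, d_{\Omega, x_0}^2}\, q^*\bigl(i\Theta(h_\Omega)\bigr)_{x_0}.$$
Since $x_0 \in \mathfrak{U}_0$, the very definition of $d_0$ gives $d_{\Omega, x_0} \leq d_0$, so the factor $1/d_{\Omega, x_0}^2$ is bounded below by $1/d_0^2$.

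To conclude, one takes the supremum over admissible choices of $\mathcal D$ at the point $x_0$; since $r_0 = d(\mathfrak{U}_0, \partial \Omega) > 0$, the Euclidean ball $\mathcal D = B(x_0, r_0) \subset \Omega$ is always a valid choice and furnishes a uniform positive constant matching the explicit lower bound for $C_0$ recorded just after its definition. I expect the only real difficulty to be the bookkeeping in the off-centered version of Lemma~\ref{lemcontrolcurvature1}, which is essentially cosmetic; the conceptual content of the argument is entirely captured by combining the $\mathrm{Aut}(\Omega)$-invariance of the Bergman kernel with the cocompactness of $\Gamma_0$ to reduce to a bounded region $\mathfrak{U}_0$ over which both $d_{\Omega, x_0}$ and the admissible symmetric subdomains admit uniform estimates.
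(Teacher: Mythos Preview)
Your proposal is correct and follows essentially the same approach as the paper: reduce to a pointwise inequality where $h_Y$ is well-defined, use the $\Gamma_0$-invariance of $i\Theta(h_\Omega)$ to bring $q(y)$ into the compact fundamental domain $\mathfrak{U}_0$, and then apply Lemma~\ref{lemcontrolcurvature1} (translated so that the symmetric subdomain is centered at the new point). Your write-up is in fact more explicit than the paper's on two points the paper leaves implicit: the harmless recentering needed in Lemma~\ref{lemcontrolcurvature1}, and the bound $d_{\Omega,x_0}\le d_0$ coming from $x_0\in\mathfrak{U}_0$.
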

\begin{proof}
Since the right hand side is continuous on $Y$, it suffices to prove the inequality at any point $y$ where $h_Y$ is non-degenerate.
The right hand side being invariant under the action of $\Gamma_0 \subset \mathrm{Aut}(\Omega)$, we can let this lattice act on $\Omega$ and assume that $x = q(y) \in \mathfrak{U_0}$. One can now apply Lemma \ref{lemcontrolcurvature1} to any bounded symmetric domain included in $\Omega$ and centered at $x$; this concludes the proof of the proposition. 
\end{proof}
\medskip

\subsection{A uniform curvature inequality}
 \label{sectsingmetorb}

We keep working under the Assumption \ref{funddomainassumption} on $\Omega$, and we keep using the symbols $\Gamma_{\widetilde V}$, $\mathfrak{U}_0$ and $C_0$, with the same meaning as before, cf. Section~\ref{orbifold} and Figure~\ref{fig:MD} that we reproduce below as Figure~\ref{fig:MD2} for the reader's convenience. 
 \begin{figure}[h]
 \centering
\begin{tikzcd}[row sep=tiny, column sep = small]
 & & & \widehat V  \arrow{dd}{\sigma} \arrow{rr}{r} & &\widehat X  \arrow{dd}{\pi}\\
\widehat Z   \arrow{rr}{\sigma_{Z}}  & & Z \arrow{ur}{f_{\widehat V}} \arrow{dd}{\rho}  & & & \\
& & & V \arrow{rr}{q} & & \overline{X} \\
& &\widetilde{V} \arrow{ur} \arrow{rr}{\iota} & & \Omega\arrow[ur,"p"'] &
\end{tikzcd}
\caption{}
\label{fig:MD2}
\end{figure}
In particular, the map $q:V\to \overline X$ is a generically immersive map and $\sigma : \widehat V \to V$ is a suitable modification making the diagram commutative. In the following, we will assume that $V$ is a weakly pseudoconvex $m$-dimensional Kähler manifold. This means that there exists a smooth plurisubharmonic exhaustion function $\psi: V \longrightarrow \mathbb R$. 
We want to show that the $\mathbb Q$-line bundle $K_{\widehat{V}} + \Delta_{\widehat{V}}$ admits a natural singular metric with positive curvature. We first make the following remark, which follows directly from Proposition~\ref{lemcontcurvfin}.

\begin{lem} 
\label{lem:ineqcurv}
Suppose that the Bergman metric $h_{Z_{\rm reg}}$ is well defined at a generic point of $Z_{\rm reg}$. Then the $\Gamma_V$-invariant metric $h_{Z_{\rm reg}}$ on $K_{Z_{\rm reg}}$ has positive curvature, satisfying
$$
i \Theta (h_{Z_{\rm reg}}) \geq C_0 \;  j^\ast (i\Theta(h_{\Omega})),
$$ 
where $j:=(i\circ \rho)|_{Z_{\rm reg}}: Z_{\rm reg} \longrightarrow \Omega$ is the natural map.
\end{lem}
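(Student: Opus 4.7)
The plan is to deduce this lemma as a direct application of Proposition~\ref{lemcontcurvfin} to the complex manifold $Y = Z_{\rm reg}$ together with the holomorphic map $q = j$. Three things need to be checked: that $j$ is generically immersive, that the Bergman metric $h_{Z_{\rm reg}}$ is defined generically (this is the standing hypothesis of the lemma), and that the resulting metric is $\Gamma_V$-invariant.

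The only point requiring a short verification is the generic immersivity of $j$. Because $p: \Omega \longrightarrow X$ is étale above $X^{\circ}$ and $q: V \longrightarrow \overline X$ is generically immersive with $q(V) \not\subset D\cup \mathrm{Sing}(p)$, the natural projection $\iota: \widetilde V \longrightarrow \Omega$ is generically immersive: on the open set $\iota^{-1}(\Omega^{\circ})$ the map $\iota$ is locally an étale lift of the generic immersion $q|_{V^{\circ}}$ through $p$, so its differential is injective wherever both $dq$ is injective and $p$ is a local biholomorphism. Next, since $\sigma: \widehat V \longrightarrow V$ is a modification, the first projection $\widetilde V \times_V \widehat V \longrightarrow \widetilde V$ is bimeromorphic, and hence so is the induced map $\rho: Z \longrightarrow \widetilde V$ between irreducible normal complex spaces of the same dimension. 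Composing, $j = (\iota \circ \rho)|_{Z_{\rm reg}}$ is generically immersive, as required.

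With this in hand, Proposition~\ref{lemcontcurvfin} applied to $j: Z_{\rm reg} \longrightarrow \Omega$ yields directly the positivity of $i\Theta(h_{Z_{\rm reg}})$ and the inequality $i\Theta(h_{Z_{\rm reg}}) \geq C_0 \, j^{\ast}(i\Theta(h_{\Omega}))$. The $\Gamma_V$-invariance is then automatic: the Bergman metric is intrinsically attached to the complex manifold $Z_{\rm reg}$, hence invariant under $\mathrm{Aut}(Z_{\rm reg})$; and the $\Gamma_V$-action on $Z$ constructed in Section~\ref{functconst} (acting on the $\widetilde V$-factor and trivially on $\widehat V$) preserves $Z_{\rm sing}$ and therefore restricts to biholomorphisms of $Z_{\rm reg}$. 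I expect no real obstacle in carrying out this plan: the lemma is essentially a bookkeeping step that packages the hypotheses of Proposition~\ref{lemcontcurvfin} into a form convenient for the descent argument to come, in which $h_{Z_{\rm reg}}$ will be pushed down to a singular metric on $K_{\widehat V}+\Delta_{\widehat V}$.
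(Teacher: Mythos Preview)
Your proposal is correct and matches the paper's approach exactly: the paper presents this lemma as a remark that ``follows directly from Proposition~\ref{lemcontcurvfin}'' and gives no further argument. Your verification of the generic immersivity of $j$ and of the $\Gamma_V$-invariance simply spells out what the paper leaves implicit.
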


The next lemma relies on an adaptation to the non-compact case of some classical arguments in K\"ahler geometry (see \textsl{e.g.} \cite{DP}). Let $\widehat{Z} \overset{\sigma_Z}{\longrightarrow} Z$ be some resolution of singularities to be fixed later and let $f_V:\widehat Z \to V$ be defined as the composition $f_V:=\sigma \circ f_{\widehat V} \circ \sigma_{Z}$.

\begin{lem} \label{lemkahler}
Assume that $V$ admits a K\"ahler metric $\omega$, let $(V_i)_{i \in \mathbb N}$ be an exhaustive sequence of relatively compact open subsets of $V$ and set $\widehat{Z}_i := f_V^{-1} (V_i)$. Then, for an adequate choice of desingularizations $\widehat{V}$ and $\widehat{Z}$, each manifold $\widehat{Z}_i$ admits a K\"ahler metric $\omega_i$. 

\noindent
Moreover, we can choose the metrics $\omega_{i}$ so that
\begin{equation} \label{cvmetrics}
\omega_{i} \underset{i \longrightarrow + \infty} \longrightarrow f_V^* \, \omega
\end{equation}
where the convergence holds uniformly on compact subsets of $\widehat{Z}$.
\end{lem}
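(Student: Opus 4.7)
The plan is to construct $\omega_i = f_V^*\omega + \varepsilon_i \, dd^c\rho_i$, a small perturbation of $f_V^*\omega$ by a smooth function $\rho_i$ whose Levi form compensates the degeneracy of $f_V^*\omega$ along the critical locus of $f_V$. As a preliminary step, I would use Hironaka's theorem to choose $\sigma\colon \widehat V \to V$ and $\sigma_Z\colon \widehat Z \to Z$ to be finite compositions of blowups along smooth centers. By further blowing up $\widehat Z$ if needed, one can arrange that the \emph{critical divisor} $D \subset \widehat Z$, defined as the union of $\mathrm{Exc}(\sigma_Z)$, the preimage of $\mathrm{Exc}(\sigma)$, and the ramification locus of $f_{\widehat V} \circ \sigma_Z$, has simple normal crossings, and that $f_V$ admits standard local normal forms near $D$---either a tame cyclic cover of the form $(z_1,\ldots,z_n) \mapsto (z_1^m, z_2, \ldots, z_n)$ (or a product thereof, at crossings) or the local model of an iterated blowup.

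Away from $D$, the pullback $f_V^*\omega$ is already smooth and strictly positive. To correct its degeneracy near $D \cap \widehat Z_i$, I would cover a neighborhood of $D$ in $\widehat Z$ by a locally finite family of coordinate charts $(U_\alpha)_\alpha$ adapted to the normal forms above, and on each $U_\alpha$ define a smooth compactly supported ``bump-quadratic'' function $\rho_\alpha := \chi_\alpha\cdot\sum_k |z_k|^2$, where $\chi_\alpha$ is a bump and $(z_k)$ are local coordinates vanishing on the components of $D$ through $U_\alpha$. The function $\rho_i := \sum_\alpha \rho_\alpha$ is then smooth on $\widehat Z$ by local finiteness, and a direct computation in each normal form (for the tame cover, $f_V^*\omega$ gives a coefficient $m^2|z_1|^{2(m-1)}$ in the degenerate direction, while $dd^c\rho_i$ contributes a strictly positive constant there) shows that
\[
\omega_i := f_V^*\omega + \varepsilon_i\, dd^c\rho_i
\]
is a K\"ahler form on $\widehat Z_i$ as soon as $\varepsilon_i > 0$ is chosen small enough with respect to the uniform local constants appearing in the positivity estimates.

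For the convergence, observe that any compact $K \subset \widehat Z$ meets the supports of only finitely many of the $\rho_\alpha$'s, so $dd^c\rho_i$ is bounded in $C^0(K)$ by a constant depending only on $K$; letting $\varepsilon_i \to 0$ then yields $\omega_i \to f_V^*\omega$ uniformly on compact subsets of $\widehat Z$. The main technical obstacle is that $f_V$ is generally not proper, because the covering map $p\colon \Omega \to X$ has infinite fibres, so $\widehat Z_i$ splits into infinitely many ``sheets'' over the relatively compact open $\sigma^{-1}(V_i) \subset \widehat V$. This prevents a direct compactness-based construction of a single global correction, but the locally finite partition-of-unity approach described above precisely bypasses this difficulty: only the local finiteness of $D$ in $\widehat Z$---and not the compactness of $\widehat Z_i$---is actually needed for $\rho_i$ and $\omega_i$ to make sense and to have the required positivity.
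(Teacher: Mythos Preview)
Your partition-of-unity construction works for the ramification divisor of the covering $f_{\widehat V}$, but there is a genuine gap at the exceptional divisors of the blow-ups $\sigma$ and $\sigma_Z$. Take a point on an exceptional component $E\subset D$ arising from blowing up a smooth center; in a chart where $E=\{z_1=0\}$ and the blow-down has the standard local form $(z_1,z_2,\ldots,z_n)\mapsto(z_1,z_1z_2,\ldots,z_1z_n)$, the pullback $f_V^*\omega$ at a point of $E$ is a non-negative multiple of $i\,dz_1\wedge d\bar z_1$ alone, hence degenerate in \emph{all} tangential directions $\partial/\partial z_2,\ldots,\partial/\partial z_n$ to $E$. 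Your correction $dd^c(\chi_\alpha\,|z_1|^2)$ contributes, where $\chi_\alpha\equiv 1$, again only $i\,dz_1\wedge d\bar z_1$; the sum $f_V^*\omega+\varepsilon_i\,dd^c\rho_i$ therefore remains of rank~$1$ at such points and is never K\"ahler, regardless of $\varepsilon_i$. (Replacing $|z_1|^2$ by $\sum_{k=1}^n|z_k|^2$ does not help either: the cutoff then produces negative contributions along $D$ itself, where $f_V^*\omega$ cannot absorb them.)

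The paper separates the two mechanisms of degeneracy. For the exceptional divisors of $\sigma$ and $\sigma_Z$ it uses the classical Chern-form trick: one chooses smooth representatives $\theta_E\in c_1(E)$, $\theta_{E'}\in c_1(E')$ (supported near the exceptional loci) that are \emph{negative definite along the fibers of the blow-ups}, so that $\sigma^*\omega-\epsilon_i\,\theta_E$ is K\"ahler on $\sigma^{-1}(V_i)$ for $\epsilon_i>0$ small. For the ramification divisor $F$ of the covering $\widehat p\colon\widehat Z\to\widehat V$---after arranging $F$ to be a disjoint union of smooth hypersurfaces---the paper does use a partition-of-unity potential $\phi$ with $i\partial\bar\partial\phi$ positive in the directions transverse to $F$, exactly in the spirit of your proposal, because there the degeneracy really is only normal. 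The K\"ahler form is then
\[
\omega_i=\widehat p^{\,*}(\sigma^*\omega-\epsilon_i\,\theta_E)+\epsilon'_i\bigl(i\partial\bar\partial\phi-\theta_{E'}\bigr),
\]
and letting $\epsilon_i,\epsilon'_i\to 0$ yields the required convergence; since $\theta_E,\theta_{E'},\phi$ are fixed global objects, no uniformity issue over the infinitely many sheets arises.
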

\begin{proof}

We may replace $\widehat{V}$ by a resolution of indeterminacies of  the bimeromorphic map $V \dashrightarrow \widehat{V}$, to suppose that $\sigma: \widehat{V} \longrightarrow V$ is obtained by a sequence of blow-ups along smooth centers. Remark that this sequence may be infinite; however, the centers project onto a locally finite family of subsets of $V$.

Let $E$ be the exceptional divisor of $\sigma$, with irreducible components $E = \sum_{k \in \mathbb N} E_k$. A classical argument allows one to find smooth $(1,1)$-forms $\theta_{E_k} \in c_1(E_k)$ with support in an arbitrarily small neighborhood of $E_k$ and a sequence of positive numbers $(a_k)$ such that the (locally finite) sum  $\theta_E = \sum_k a_k \theta_k$ defines a $(1, 1)$-form on $\widehat{V}$ which is negative definite along the fibers of $\sigma$.
 Fix now some $i \in \mathbb N$. Since $V_i$ is relatively compact in $V$, for $\epsilon_i > 0$ small enough, the closed $(1, 1)$-form $$
\omega_{\widehat{V}_i} =  \sigma^\ast \omega - \epsilon_i \, \theta_E
$$
defines a Kähler metric on $\sigma^{-1} (V_i)$.
\medskip

Now, let $\widehat{Z} \overset{\sigma_Z}{\longrightarrow} Z$ be a resolution of singularities obtained by blowing-up smooth centers, and let $\widehat{p}: \widehat{Z} \longrightarrow \widehat{V}$ be the induced map. We ask that the strict transform $F = \widehat{p}_\ast \,^{-1}(\Delta_{\widehat{V}})$ is a \emph{disjoint} union of smooth hypersurfaces, and that $F$ has simple normal crossings with the exceptional divisor $E'$ of the map $\sigma_Z$.  Using partitions of unity, we can easily construct a smooth function $\phi$ on $\widehat{Z}$ so that $i \partial \overline{\partial} \phi$ is positive in the directions transverse to the ramification divisor $F$. As before, we also let $\theta_{E'} \in c_1(E')$ be negative definite along the fibers of $\sigma_{Z}$.

With these definitions, for $\epsilon_i' > 0$ small enough, the closed $(1, 1)$-form
$$
\omega_{i} = \widehat{p} \,^{\ast} \omega_{\widehat{V}_i} + \epsilon'_i \left( i \partial \overline{\partial} \phi - \theta_{E'} \right),
$$
defines a K\"ahler metric on $\widehat{Z}_i$.

For the second requirement to be satisfied, we just need to take $\epsilon_i$ and $\epsilon_i'$ decreasing to $0$ as $i \longrightarrow + \infty$.
\end{proof}

The next proposition is an adaptation to the non-compact case of the main argument of \cite{BD18}. It is the last step towards Theorem \ref{thmsingmetric}, which is the main result of this section.

\begin{prop} 
\label{weakbergman}
Assume that $V$ is a weakly pseudoconvex K\"ahler manifold. Then, we can choose $\widehat{V}$ and $\widehat{Z}$ so that $\widehat{Z}$ is a \emph{weakly Bergman manifold}.
\end{prop}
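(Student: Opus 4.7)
The plan is to adapt the $L^2$-technique of \cite{BD18} to the weakly pseudoconvex, non-compact setting. It suffices to exhibit, at some generic smooth point $z_0 \in \widehat Z$, a family of $L^2$ holomorphic sections of $K_{\widehat Z}$ whose $1$-jets at $z_0$ generate $J^1_{z_0} K_{\widehat Z}$: by the curvature formula \eqref{eqcurvature}, this forces $h_{\widehat Z}$ to be smooth and strictly positive on a neighborhood of $z_0$, which is exactly the defining property of a weakly Bergman manifold.

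Choose $z_0 \in \widehat Z$ generic, lying over an interior point $w_0 \in \Omega$ via the composite map $\nu := \iota \circ \rho \circ \sigma_Z$, which may be arranged to be an immersion at $z_0$, and to avoid the various exceptional and ramification divisors. Since $\Omega$ is a bounded domain, it carries a globally bounded strictly psh function; pull it back by $\nu$ to obtain a bounded psh function $\Phi$ on $\widehat Z$, strictly psh near $z_0$. Given a target jet $J \in J^1_{z_0} K_{\widehat Z}$, take a smooth compactly supported section $\sigma_0$ of $K_{\widehat Z}$ realizing $J$ at $z_0$. For each $i$ such that $z_0 \in \widehat Z_i$, solve $\bar\partial u_i = \bar\partial \sigma_0$ on the relatively compact K\"ahler manifold $(\widehat Z_i, \omega_i)$ provided by Lemma~\ref{lemkahler}, applying Demailly's $L^2$-estimates with weight $e^{-N \Phi - \lambda_{z_0}}$, where $\lambda_{z_0}$ has a logarithmic singularity at $z_0$ of order high enough to force $u_i$ to vanish to order $\geq 2$ at $z_0$, and $N \gg 1$ is taken so that $N \, i\partial \bar\partial \Phi$ dominates $i \partial \bar\partial \lambda_{z_0}$ in a neighborhood of $z_0$. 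The forms $\tau_i := \sigma_0 - u_i$ are then holomorphic over $\widehat Z_i$, have $1$-jet $J$ at $z_0$, and enjoy a uniform $L^2$-bound in $i$.

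Thanks to the uniform convergence $\omega_i \to f_V^\ast \omega$ on compact subsets (Lemma~\ref{lemkahler}), a standard weak compactness argument in the Fr\'echet space of holomorphic sections extracts a subsequential $L^2$-limit $\tau \in \mathcal H_{\widehat Z}$ still realizing $J$ at $z_0$. Since $J$ was arbitrary, the $1$-jet map $\mathcal H_{\widehat Z} \to J^1_{z_0} K_{\widehat Z}$ is surjective, proving that $\widehat Z$ is weakly Bergman. The main technical obstacle is that the K\"ahler metrics $\omega_i$ are \emph{not} complete on $\widehat Z_i$, whereas H\"ormander--Demailly $L^2$-estimates for $\bar\partial$ are most directly stated for complete K\"ahler manifolds. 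This must be handled either by approximating $\widehat Z_i$ from inside by relatively compact weakly pseudoconvex subdomains with a naturally complete K\"ahler geometry, or by perturbing $\omega_i$ into a complete K\"ahler metric while preserving the curvature control provided by $\Phi$; in either approach, the weak pseudoconvexity of $V$ (through its psh exhaustion $\psi$) enters in an essential way.
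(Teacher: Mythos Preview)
Your proposal follows essentially the same strategy as the paper: exhaust $V$ by sublevel sets $V_i$ of the psh exhaustion, use Lemma~\ref{lemkahler} to equip $\widehat{Z}_i$ with K\"ahler metrics $\omega_i$, solve a $\bar\partial$-equation with a log-singular weight on each $\widehat{Z}_i$ to produce holomorphic $m$-forms with prescribed $1$-jet and uniform $L^2$ bound, then extract a limit. Two points are worth noting where the paper is more streamlined. First, the paper builds the weight directly on $\Omega$ as $\varphi(x) = (n+1)\log|x - s(z)| + |x|^2$ and pulls it back; this function is globally psh on $\Omega$, so no cutoff of the log term is needed and your compensating factor $N\Phi$ becomes unnecessary. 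Second, your worry about completeness of $\omega_i$ is resolved in the paper simply by observing that each $\widehat{Z}_i$ is \emph{weakly pseudoconvex} (since $V_i = \psi^{-1}([0,i[)$ is, and $\widehat{Z}_i \to V_i$ is proper), so Demailly's $L^2$ estimates for weakly pseudoconvex K\"ahler manifolds \cite[Theorem~6.1]{dem12} apply directly---there is no need to approximate from inside or to perturb $\omega_i$ to a complete metric.
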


\begin{rem}
If $V$ is \textit{compact} Kähler, then $\widetilde V$ is complete Kähler and admits a generically immersive map towards the bounded domain $\Omega$, and hence the conclusion follows directly from \cite{BD18}. 
\end{rem}
\begin{proof}

Let $\psi: V \longrightarrow \mathbb R$ be a smooth exhaustive plurisubharmonic function. For each $i \in \mathbb N$, we let $V_i = \psi^{-1}([0, i[)$, and we fix $\widehat{Z}$, $\widehat{V}$ and $(\omega_i)_{i \in \mathbb N}$ as provided by Lemma \ref{lemkahler}. We will show that $i \Theta (h_{\widehat{Z}})$ is positive definite at a generic point of $Z_{\rm reg}$.  By definition of the Bergman metric, it suffices to show that the $L^2$ holomorphic $m$-forms on $\widehat{Z}$ generate the $1$-jets at any generic point of $Z_{\rm reg}$. 
\medskip

Let $z \in Z_{\rm reg}$ be a point belonging to the regular loci of the maps $j = \iota \circ \rho$ and $\sigma \circ f_{\widehat V}$ (cf. Figure \ref{fig:MD2}).  One picks a germ $\tau$ of holomorphic $m$-form at $z$ (recall that $m = \dim Z$). Remark now that each $\widehat{Z}_i$ is weakly pseudoconvex because $V_i$ is weakly pseudoconvex and the natural maps $\widehat{Z}_i \longrightarrow V_i$ are proper. Since each $\widehat{Z}_i$ admits the K\"ahler metric $\omega_i$, this allows us to use the $L^2$-method on $\widehat{Z_i}$ with $\omega_i$ (see \cite[Theorem 6.1]{dem12}).
\medskip

To do this, we choose a cutoff function $\chi$ on $\widehat{Z}$, equal to $1$ in a neighborhood of $z$, and with compact support $L \subset \widehat{Z_{i_0}} \cap \sigma_Z^{-1}(Z_{\rm reg})$ for some $i_0 \geq 0$. Without loss of generality, one can assume that $L$ is contained in the regular locus of $ f_V :\widehat Z\to V$. Let us define
\begin{align*}
\varphi :  \, \,\Omega & \longrightarrow  \mathbb R\\
 x  &\longmapsto 2(n+1) \log |x - j(z)| + |x|^2.
\end{align*} The function $\varphi$ is psh on $\Omega$, and $\widehat \varphi :=\varphi \circ \iota \circ \rho \circ \sigma_Z$ is strictly psh at $\sigma_Z^{-1}(z)$. Note that both $\chi$ and $\varphi$ are independent of $i$.

As explained above, we can apply the $L^2$ method on $\widehat{Z}_i$ to deduce that there exists a smooth $(m, 0)$-form $f_i$ on each $\widehat{Z}_i$, satisfying
\begin{equation} \label{L2method}
i^{m^2} \int_{\widehat{Z}_i} f_i \wedge \overline{f_i} \; e^{- \widehat \varphi} \leq \int_{L} | \overline{\partial} (\chi \tau) |_{\omega_i}^2\, e^{- \widehat \varphi} dV_{\omega_i}.
\end{equation}
Thanks to \eqref{cvmetrics}, and since the metric $f_V^\ast \, \omega$ is non-degenerate on the compact set $L \subset Z_{\rm reg}$, the right hand side of the above equation is uniformly bounded by some constant $C$ for any $i \geq i_0$. Since $\varphi$ is bounded from above, this implies a uniform bound
\begin{equation} \label{eqbound}
\norm{f_i}^2_{L^2(\widehat{Z_i})} = \int_{\widehat{Z}_i} i^{m^2} f_i \wedge \overline{f_i} \; \leq \; C \, e^{\sup_\Omega \varphi}.
\end{equation}

The expression of $\varphi$ is chosen so that the bound \eqref{L2method} implies that $f_i$ has a vanishing $1$-jet at $z$. Thus, for any $i$, $\eta_i = \chi \tau - f_i$ is a holomorphic $m$-form on $\widehat{Z_i}$ with jet $(d \eta_i)_z = d \tau_z$ at $z$. Also, \eqref{eqbound} provides a uniform bound 
$$
\sup_i \norm{\eta_i}^2_{L^2(\widehat{Z_i})} \leq C'.
$$
 Thus, we can extract a sequence converging uniformly on compact subsets towards a holomorphic form $\eta$. This form $\eta$ satisfies $d \eta_z = d \tau_z $. Also, by Fatou lemma, we have $\norm{\eta}^2_{L^2(\widehat{Z})} \leq C'$. Thus, $\eta$ satisfies our requirements.
\end{proof}

We are now ready to prove the main result of this section. 

\begin{thm} 
\label{thmsingmetric} 

Let $q:V\to \overline X$ be a generically immersive map from a \emph{weakly pseudoconvex K\"ahler manifold} $V$ such that $q(V)\not \subset D \cup \mathrm{Sing}(p)$.

\noindent
Provided that Assumption~\ref{funddomainassumption} is satisfied, we can choose $\widehat{V}$ so that the $\mathbb Q$-line bundle $\mathcal O_{\widehat V}(K_{\widehat{V}} + \Delta_{\widehat{V}})$ admits a natural singular metric $g_{\widehat{V}}$ with positive curvature, satisfying
\begin{equation} \label{eqcontrolcurv}
i \Theta (g_{\widehat{V}}) \geq C_0 \;  (q\circ \sigma)^\ast i \Theta(g_{X})
\end{equation}
over $V \setminus \left(  q^{-1}(D \cup \mathrm{Sing}(p)) \right)$. 
\end{thm}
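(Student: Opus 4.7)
My plan is to assemble three ingredients already developed in the paper: the weakly-Bergman manifold structure on a suitable resolution of $Z$, the orbifold descent identity of Lemma~\ref{formula2}, and the uniform curvature lower bound of Proposition~\ref{lemcontcurvfin}. Concretely, I begin by invoking Proposition~\ref{weakbergman} to fix desingularizations $\widehat V$ and $\widehat Z$ such that $\widehat Z$ is a weakly Bergman manifold. This provides a singular hermitian metric $h_{\widehat Z}$ on $K_{\widehat Z}$ with non-negative curvature which is moreover smooth and positive definite on a non-empty Euclidean open subset. By Remark~\ref{bimero}, the pullback isometry identifies $h_{\widehat Z}$ with $\sigma_Z^{\ast} h_{Z_{\mathrm{reg}}}$ on the locus where $\sigma_Z$ is a biholomorphism.

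Next, I push the metric down along $f_{\widehat V}$. The deck group $\Gamma_V$ of Section~\ref{functconst} acts on $Z$, and lifts to $\widehat Z$ after replacing $\sigma_Z$ by a $\Gamma_V$-equivariant resolution (which is allowed by functoriality of the Hironaka-type arguments). The Bergman kernel is $\mathrm{Aut}$-invariant, hence $\Gamma_V$-invariant. Invoking the identity $K_Z=f_{\widehat V}^{\ast}(K_{\widehat V}+\Delta_{\widehat V})$ of Lemma~\ref{formula2}, the invariant metric $h_{Z_{\mathrm{reg}}}$ descends to a singular hermitian metric $g_{\widehat V}^{\circ}$ on $(K_{\widehat V}+\Delta_{\widehat V})|_{\widehat V\setminus \mathrm{Supp}\,\Delta_{\widehat V}}$ with positive curvature current.

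To extend $g_{\widehat V}^{\circ}$ across $\mathrm{Supp}\,\Delta_{\widehat V}$, I run the very same local analysis as in Step~2 of the proof of Theorem~\ref{logbig}. At a generic point of a component $F_i$ of multiplicity $n_i\in \mathbb N^{\ast}\cup\{\infty\}$, the map $f_{\widehat V}$ is modelled either by a cyclic $n_i$-fold cover $(w_1,\ldots,w_m)\mapsto (w_1^{n_i},w_2,\ldots,w_m)$ or by the universal cover of a punctured polydisc. The explicit computation \eqref{computationnorm} shows that the pullback Bergman weight minus $(1-\tfrac{1}{n_i})\log|z_1|^2$ is locally bounded from above near $F_i$. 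Hence the weight of $g_{\widehat V}^{\circ}\otimes h_{\Delta_{\widehat V}}$ (with $h_{\Delta_{\widehat V}}$ a smooth metric on $\mathcal O(\Delta_{\widehat V})$ tensorized so that its curvature is the current of integration on $\Delta_{\widehat V}$) is bounded above in codimension one, and standard Hartogs-type extension for plurisubharmonic weights yields a globally defined singular metric $g_{\widehat V}$ on $\mathcal O_{\widehat V}(K_{\widehat V}+\Delta_{\widehat V})$ with positive curvature current.

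Finally, I establish the quantitative inequality \eqref{eqcontrolcurv}. Proposition~\ref{lemcontcurvfin} applied to the generically immersive map $\iota\circ\rho\colon Z_{\mathrm{reg}}\to \Omega$ gives
$$
i\Theta(h_{Z_{\mathrm{reg}}})\;\ge\; C_0\,(\iota\circ\rho)^{\ast}\,i\Theta(h_{\Omega}).
$$
Now the metric $g_X$ on $K_X|_{X^{\circ}}$ is by construction the descent of $h_{\Omega}$ along the étale quotient $p\colon \Omega^{\circ}\to X^{\circ}$, so $(p\circ \iota\circ\rho)^{\ast} i\Theta(g_X)=(\iota\circ\rho)^{\ast} i\Theta(h_{\Omega})$ on $Z_{\mathrm{reg}}$. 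Commutativity of the diagram in Figure~\ref{fig:MD2} gives $p\circ\iota\circ\rho=q\circ\sigma\circ f_{\widehat V}$, so the right-hand side equals $f_{\widehat V}^{\ast}(q\circ\sigma)^{\ast} i\Theta(g_X)$. Because $f_{\widehat V}$ is a local isomorphism on $\widehat V\setminus \mathrm{Supp}\,\Delta_{\widehat V}$, the inequality descends to the required bound on $\widehat V$ over $V\setminus q^{-1}(D\cup \mathrm{Sing}(p))$. The main obstacle is the \emph{extension step} in paragraph three: one has to verify that the explicit divergence of the Bergman kernel along the ramification branches is at worst the one prescribed by the fractional coefficients $1-\tfrac{1}{n_i}$, and for the $n_i=\infty$ branches the logarithmic factor in \eqref{computationnorm} must still be absorbed after adding the full $\log|z_1|^2$. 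Once this local calculation is in place, the rest of the argument is formal descent and diagram chasing.
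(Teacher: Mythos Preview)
Your proof is correct and follows essentially the same route as the paper's: invoke Proposition~\ref{weakbergman} (via Remark~\ref{bimero}) to ensure $h_{Z_{\rm reg}}$ exists, descend it by $\Gamma_V$-invariance and Lemma~\ref{formula2} to a metric on $K_{\widehat V}+\Delta_{\widehat V}$, extend across $\mathrm{Supp}\,\Delta_{\widehat V}$ by the local computation of Step~2 in Theorem~\ref{logbig}, and obtain the curvature bound from Proposition~\ref{lemcontcurvfin} (which is exactly Lemma~\ref{lem:ineqcurv}). One small simplification: you do not need to replace $\sigma_Z$ by a $\Gamma_V$-equivariant resolution, since the paper works directly with $h_{Z_{\rm reg}}$, and $\Gamma_V$ already acts on $Z$ by automorphisms, making that Bergman metric automatically invariant; the passage through $\widehat Z$ is only used to certify that $Z_{\rm reg}$ is weakly Bergman.
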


\begin{rem}
In the particular case where $V$ is a \emph{compact K\"ahler manifold}, we recover the case obtained in Theorem~\ref{newthm} : $K_{\widehat{V}} + \Delta_{\widehat{V}}$ is big by \cite{bou02}. 
\end{rem}

\begin{proof}

Let $\widehat{V}^\circ \subset \widehat V$ be the étale locus of the cover $f_{\widehat V}$ and let $F:=\widehat V \setminus \widehat{V}^\circ$. By purity of the branch locus, $F$ has pure codimension one. Let $n_i\in \mathbb N^*\cup \{\infty\}$ be the covering multiplicity attached to an irreducible component $F_i$ of $F$ and let $\Delta_{\widehat V}:=  \sum_i (1 - \frac{1}{n_i}) F_i$.

Since the Bergman metric $h_{Z_{\mathrm{\rm reg}}}$ is invariant under the group $\Gamma_V$,  it descends to define a singular metric $g_{\widehat{V}}$ on the $\mathbb Q$-line bundle $K_{\widehat{V}} +\Delta_{\widehat V}$ with positive curvature by the same arguments as those provided in the proof of Theorem~\ref{logbig} where compactness plays no role. 
A priori, $g_{\widehat{V}}$ is only defined on $f_{\widehat V}(Z_{\rm reg})$ but since that open set has complement whose codimension is at least two in $\widehat V$, the metric extends canonically across $f_{\widehat V}(Z_{\rm sing})$. 

It remains to see that the curvature of $g_{\widehat{V}}$ satisfies the required lower bound \eqref{eqcontrolcurv}. Outside $\mathrm{Sing}(p) \cup D$, the maps $p\colon\Omega\to X$ and $f_{\widehat V}:Z\to \widehat V$ are étale covers. In particular, one has on that locus an equality of smooth forms 
$$\Theta(h_\Omega)=p^*\Theta(g_X) \quad \mbox{and} \quad \Theta(h_{Z_{\rm reg}})=f_{\widehat V}^*\Theta(g_{\widehat V}) $$
and the differential of $f_{\widehat V}$ induces an isomorphism 
$f_{\widehat V}^*:\Omega_{\widehat V,y}^{1,1} \overset{\sim}{\longrightarrow} \Omega_{Z,x}^{1,1}$ whenever $f_{\widehat V}(x)=y$. By commutativity of the diagram in Figure~\ref{fig:MD2}, one has that $j^*\Theta(h_{\Omega})= f_{\widehat V}^*(q\circ \sigma)^*\Theta(h_X)$ and therefore,  \eqref{eqcontrolcurv} follows from Lemma~\ref{lem:ineqcurv}. 
\end{proof}

\begin{rem}
As the reader will easily see, if we drop Assumption~\ref{funddomainassumption} (in particular if we only assume that $\Omega$ is a manifold of  bounded type), the same proof shows that $K_{\widehat{V}} + \Delta_{\widehat{V}}$ admits a singular metric with positive curvature, but we cannot obtain the bound \eqref{eqcontrolcurv} anymore. 
\end{rem}

\subsection{Statement of the criterion}
\label{sec:criterion}
In this section, we will state a hyperbolicity criterion for $X$, assuming it is now a \emph{compact, non-necessarily smooth} quotient. The precise assumption is as follows.

\begin{assumption}
\label{compactassumption}
Under the hypotheses of Assumption \ref{generalassumption}, we assume moreover that $X$ itself is a \emph{compact} complex space, \textsl{i.e.} $\overline{X} = X$.
\end{assumption}

We resume the notations of the previous section.

\begin{thm}
\label{thmsingmetric2}
Assume that $X = \X$ is as in Assumption \ref{compactassumption}, and let $\pi: \widehat{X} \longrightarrow X$ be a projective resolution of singularities of $X$, which exists accordingly to Lemma \ref{projective-resolution}. Let $\alpha > \frac{1}{C_0}$, and assume that the $\mathbb Q$-divisor
$$
L_\alpha = \pi^\ast (K_{X}+\Delta_X) - \alpha \Delta_{\widehat X} 
$$
is effective, where $\Delta_X$ is the covering divisor associated to $p:\Omega\to X$. Then
\begin{enumerate}
\item any subvariety $W \subseteq X$ such that $W \not\subset \pi (\mathbb B(L_\alpha)) \cup \mathrm{Sing}(p)$ is of general type.
\item any entire curve $f: \mathbb C \longrightarrow X$ has its image included in $\pi(\mathbb B(L_\alpha)) \cup \mathrm{Sing}(p)$.
\end{enumerate}
\end{thm}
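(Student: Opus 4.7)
The plan is to combine the curvature estimate of Theorem~\ref{thmsingmetric} with the effectivity of $L_\alpha$ to construct, on any suitable resolution $\widehat V$, a singular metric on $K_{\widehat V}$ whose curvature dominates a positive fraction of $T_A := i\Theta(g_{\widehat V})$. For $V = W$ compact, this forces bigness of $K_{\widehat W}$ via Boucksom's criterion, proving (1); for $V = \mathbb C$ an Ahlfors--Schwarz argument yields a contradiction, proving (2).

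For (1), let $W \subseteq X$ satisfy $W \not\subset \pi(\mathbb B(L_\alpha)) \cup \mathrm{Sing}(p)$, and let $\widehat W \to W$ be a resolution with induced map $r: \widehat W \to \widehat X$ as in Section~\ref{functconst}. Theorem~\ref{thmsingmetric} provides $g_{\widehat W}$ on $K_{\widehat W} + \Delta_{\widehat W}$ with $T_A \geq C_0\, r^* \pi^* T$, where $T := i\Theta(g_X)$, while the condition $W \not\subset \pi(\mathbb B(L_\alpha))$ allows us to pull back a section of a sufficiently divisible power of $L_\alpha$ to a non-trivial section of (a power of) $r^* L_\alpha$, endowing $r^* L_\alpha$ with a singular metric of curvature $[r^* E_\alpha]$ for some effective $\mathbb Q$-divisor $E_\alpha$. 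Using $\pi^*(K_X + \Delta_X) = L_\alpha + \alpha \Delta_{\widehat X}$ together with the effective divisor $F := r^*\Delta_{\widehat X} - \Delta_{\widehat W}$ given by Proposition~\ref{propeffective}, we decompose
\[
K_{\widehat W} = (K_{\widehat W} + \Delta_{\widehat W}) + F + \tfrac{1}{\alpha}\, r^* L_\alpha - \tfrac{1}{\alpha}\, r^* \pi^*(K_X + \Delta_X),
\]
and equip the four summands respectively with $g_{\widehat W}$, the natural metric on $\mathcal O(F)$, the metric induced by the pulled-back section on $\tfrac{1}{\alpha} r^* L_\alpha$, and the inverse of $(r^*\pi^* g_X)^{1/\alpha}$. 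The resulting product singular metric on $K_{\widehat W}$ has curvature at least
\[
T_A + [F] + \tfrac{1}{\alpha}[r^* E_\alpha] - \tfrac{1}{\alpha}\, r^* \pi^* T \;\geq\; \Bigl(1 - \tfrac{1}{\alpha C_0}\Bigr) T_A,
\]
using $r^* \pi^* T \leq \tfrac{1}{C_0} T_A$. Since $\alpha > 1/C_0$ the coefficient is strictly positive, and since $T_A$ is smooth positive definite on a dense open subset of $\widehat W$ by the weakly Bergman property, the absolutely continuous part of this positive current has strictly positive Monge--Amp\`ere mass. Hence $\mathrm{vol}(K_{\widehat W}) > 0$ and $K_{\widehat W}$ is big by \cite[Thm.~1.2]{bou02}, so $W$ is of general type.

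For (2), suppose for contradiction that $f: \mathbb C \to X$ is a non-constant entire curve with $f(\mathbb C) \not\subset \pi(\mathbb B(L_\alpha)) \cup \mathrm{Sing}(p)$. Since $\mathbb C$ is weakly pseudoconvex K\"ahler, Theorem~\ref{thmsingmetric} applies to $V = \mathbb C$, $q = f$, and running the construction of (1) on a suitable modification $\sigma: \widehat V \to \mathbb C$ yields a singular metric on $K_{\widehat V}$ with curvature bounded below by $\bigl(1 - \tfrac{1}{\alpha C_0}\bigr) T_A$. On the Riemann surface $\widehat V$, the identification $K_{\widehat V} \simeq T^*_{\widehat V}$ turns this into a singular pseudo-Hermitian metric on $T_{\widehat V}$ whose holomorphic sectional curvature is bounded above by a strictly negative multiple of itself on the non-empty open set where $T_A$ is positive definite. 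An Ahlfors--Schwarz-type argument adapted to the parabolic source $\mathbb C$ then forces this pseudo-metric to vanish identically on the image of $\sigma^{-1}$, contradicting the strict positivity of $T_A$ at a generic point. The main difficulty I expect is the rigorous verification of this Ahlfors--Schwarz step on the possibly non-compact, non-complete $\widehat V$, which likely requires an exhaustion argument based on the weak pseudoconvexity of $V$; a secondary technical subtlety in (1) is to justify that the product of the various singular metrics is a well-defined current whose absolutely continuous part supplies the claimed Monge--Amp\`ere mass.
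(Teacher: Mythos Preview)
Your treatment of (1) is essentially the paper's argument: your decomposition of $K_{\widehat W}$ corresponds to the endpoint choice $\beta = 1/\alpha$ in the paper's one-parameter family of weights (the paper takes $\beta \in (1/\alpha, C_0)$, but your value works since Proposition~\ref{propeffective} already gives $r^*\Delta_{\widehat X} - \Delta_{\widehat W} \geq 0$). The technical subtlety you flag is handled exactly as in the paper: the constructed weight is psh on the dense open set $(\pi\circ r)^{-1}(X\setminus \mathrm{Sing}(p))$ where the curvature estimate \eqref{eqcontrolcurv} holds, and it is locally bounded above everywhere because $r^*\pi^*\phi_X$ is bounded, so it extends to a global psh weight on $\widehat W$.

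In (2) there is a genuine gap. From your construction you obtain a metric $h$ on $K_{\widehat V}$ with $i\Theta(h) \geq c\, T_A$ where $T_A = i\Theta(g_{\widehat V})$, but the Ahlfors--Schwarz lemma requires $i\Theta(h) \geq C\, h^{-1}$, and you never explain how to pass from $T_A$ to $h^{-1}$. The missing ingredient, supplied by the paper, is that each connected component of $\widetilde V$ maps non-constantly to the bounded domain $\Omega$, so by Liouville its universal cover is the disk; the Bergman metric there is the Poincar\'e metric, whence $i\Theta(g_{\widehat V}) = g_{\widehat V}^{-1}$ on the regular locus $V^\circ$. Since the remaining factors in your product metric are bounded above on the compact $\widehat X$, this yields $h^{-1} \leq C'\, g_{\widehat V}^{-1} = C'\, T_A$ on $V^\circ$, closing the gap. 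Your concern about needing an exhaustion of $\widehat V$ is moreover a red herring: a modification of a smooth curve is an isomorphism, so $\widehat V \cong \mathbb C$ and the standard Ahlfors--Schwarz lemma on $\mathbb C$ (for singular metrics, as in \cite[Lem.~3.2]{Dem95}) applies directly.
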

\begin{comment}
\begin{rem} The minimal hypothesis for our proof to work, would be to assume that $\X$ admits a resolution of singularities $\widehat{X}$ which is \emph{K\"ahler}. However, in this situation, Theorem \ref{thmsingmetric} readily implies that $\widehat{X}$ supports a big line bundle, so is Moishezon, and thus projective.  

Recall also that when $\Gamma$ is cocompact, we can use Lemma \ref{kahler} to find a projective model $\widehat{X}$. 
\end{rem}
\end{comment}
\begin{proof}

Let us prove first the statement concerning subvarieties. Suppose that $W \subset X$ is a subvariety as in the theorem, and let $V$ be a resolution of singularities of $W$. Since the natural map $V \overset{q}{\longrightarrow} \overline{X}$ is generically immersive, the relative construction of Section \ref{functconst} can be applied, which yields a smooth bimeromorphic model $\widehat{V}$ of $V$ (which we can assume to be projective since $\widehat{X}$ is) and a map $r:\widehat V\to \widehat X$ as in Figure~\ref{fig:MD}.

\medskip

By Theorem \ref{thmsingmetric}, the $\mathbb Q$-divisor $\mathcal O(K_{\widehat{V}} + \Delta_{\widehat{V}} )$ admits a metric with positive curvature, and it is controlled as in \eqref{eqcontrolcurv} on $(\pi \circ r)^{-1}( X\setminus \mathrm{Sing}(p) )$.
By assumption, $W \not\subset \mathbb B(L_\alpha)$, so for $m$ large enough, there exists a section $$\sigma \in H^0 \left(\widehat{X}, m(\pi^\ast (K_X+\Delta_X) - \alpha \Delta_{\widehat X}) \right)$$ such that $\sigma|_{W} \neq 0$. 

\medskip

We denote by $\phi_X$ (resp. $\phi_{\widehat V}$) the psh weight on $K_X+\Delta_X$ (resp. $K_{\widehat V}+\Delta_{\widehat V}$) associated to the metric $g_X$ (resp. $g_{\widehat V}$). Next, we introduce the canonical singular weights $\phi_{\Delta_{\widehat X}}$ (resp. $\phi_{\Delta_{\widehat V}}$) on $\mathcal O_{\widehat X}(\Delta_{\widehat X})$ (resp. $\mathcal O_{\widehat V}(\Delta_{\widehat V})$) whose curvature current is $[\Delta_{\widehat X}]$ (resp. $[\Delta_{\widehat V}]$).

Because $p^*\phi_X$ is the weight associated to the Bergman metric on $\Omega$, the weight $\phi_X$ is locally bounded. One introduces the quantity $$F:=|\sigma|^2 e^{-m\pi^*\phi_X}e^{m\alpha \phi_{\Delta_{\widehat X}}},$$
it is a \textit{function} on $\widehat X$. Therefore, for any positive number $\beta>0$, the quantity
\begin{equation}
\label{phi}
e^{-\phi}:=(r^*F)^{-\beta/m}e^{-\phi_{\widehat V}+\phi_{\Delta_{\widehat V}}}
\end{equation}
defines a singular Hermitian metric on $K_{\widehat V}$. The first item in the theorem is a consequence of the following claim thanks to  standard results in pluripotential theory together with \cite{bou02}.

\begin{claim}
\label{claim1}
For $\beta \in \left( \frac{1}{\alpha}, C_0 \right)$, the following two properties hold: 
\begin{enumerate}
\item[$(i)$] The weight $\phi$ is locally bounded above
\item[$(ii)$] The weight $\phi$ is smooth and strictly psh on $(\pi \circ r)^{-1}(X\setminus \mathrm{Sing}(p))$. 
\end{enumerate}  
\end{claim}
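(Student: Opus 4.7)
The plan is to write $\phi = \frac{\beta}{m}\log(r^*F) + \phi_{\widehat V} - \phi_{\Delta_{\widehat V}}$ and analyze it by pointwise estimates for (i), and via the Poincar\'e--Lelong formula for (ii). A useful preliminary is to re-express $F$: multiplying $\sigma$ by the canonical section $s_B$ of $\mathcal O(B)$, where $B := m\alpha\Delta_{\widehat X}$, produces a genuine section $\tau := \sigma\otimes s_B$ of $A := m\pi^*(K_X+\Delta_X)$, with $\mathrm{div}(\tau) = \mathrm{div}(\sigma) + m\alpha\Delta_{\widehat X}$. Up to a smooth positive factor one has $F = |\tau|^2 e^{-m\pi^*\phi_X}$, and since $\phi_X$ is locally bounded, $F$ is a globally bounded nonnegative continuous function on $\widehat X$. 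In particular, $\tau$ vanishes to order at least $m\alpha(1-1/m_j)$ along each component $E_j$ of $\Delta_{\widehat X}$.

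For (i), the only term in the definition of $\phi$ that fails to be locally bounded above is $-\phi_{\Delta_{\widehat V}}$, which near the generic point of a component $F_i$ of $\Delta_{\widehat V}$ with local equation $s$ contributes $-(1-1/n_i)\log|s|^2 + O(1)$. The plan is to show that $\frac{\beta}{m}\log(r^*F)$ supplies enough negative logarithmic divergence to compensate: writing $r^*E_j = c_{ij}F_i + \ldots$ in local coordinates, the vanishing of $r^*\tau$ along $F_i$ is of order at least $m\alpha\sum_j(1-1/m_j)c_{ij}$, whence
\begin{equation*}
\frac{\beta}{m}\log(r^*F)-\phi_{\Delta_{\widehat V}} \;\leq\; \Big[\beta\alpha \sum_j(1-1/m_j)c_{ij}-(1-1/n_i)\Big]\log|s|^2 + O(1).
\end{equation*}
Written coefficient-wise, Proposition \ref{propeffective} states precisely $1-1/n_i\leq \sum_j(1-1/m_j)c_{ij}$, so for $\beta\alpha \geq 1$ --- a consequence of $\beta>1/\alpha$ --- the bracket is nonnegative. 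Since $\log|s|^2 \leq 0$ near $F_i$, the right-hand side is locally bounded above. Combining with the local boundedness above of the psh weight $\phi_{\widehat V}$ and of $\log(r^*F)$ itself completes (i).

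For (ii), observe that on $(\pi\circ r)^{-1}(X\setminus \mathrm{Sing}(p))$ the divisor $\Delta_{\widehat X}$ does not appear, and neither does $\Delta_{\widehat V}$ (by the inclusion $\mathrm{Supp}\, \Delta_{\widehat V}\subset r^{-1}(\mathrm{Supp}\, \Delta_{\widehat X})$ from Proposition \ref{propeffective}). Moreover, $\phi_X$, $\phi_{\widehat V}$ and $F$ are smooth there on a dense open subset, so $\phi$ is smooth on a dense open subset. The Poincar\'e--Lelong formula applied to $\sigma$ with the singular metric on $mL_\alpha$ gives $dd^c \log F = [\mathrm{div}(\sigma)] - m\pi^* i\Theta(g_X) + m\alpha[\Delta_{\widehat X}]$, whence
\begin{equation*}
dd^c\phi = \frac{\beta}{m}r^*[\mathrm{div}(\sigma)] + \big(i\Theta(g_{\widehat V})-\beta(\pi\circ r)^*i\Theta(g_X)\big) + \big(\beta\alpha\, r^*[\Delta_{\widehat X}]-[\Delta_{\widehat V}]\big).
\end{equation*}
Theorem \ref{thmsingmetric} bounds the middle parenthesis below by $(C_0-\beta)(\pi\circ r)^*i\Theta(g_X)$, which is nonnegative for $\beta<C_0$; Proposition \ref{propeffective} combined with $\beta\alpha\geq 1$ makes the last parenthesis an effective divisor. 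Strict positivity at a generic point of $(\pi\circ r)^{-1}(X\setminus\mathrm{Sing}(p))$ follows from the fact that $i\Theta(g_X)$ is K\"ahler on a dense open subset of $X^\circ$.

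The main obstacle is the local estimate at the heart of (i): matching the blow-up rate of the canonical metric on $\mathcal O(\Delta_{\widehat V})$ against the vanishing of $r^*\sigma$ along each component $F_i$ of $\Delta_{\widehat V}$. This is exactly what Proposition \ref{propeffective} encodes, and the threshold $\beta > 1/\alpha$ is dictated by it; once this estimate is in hand, the rest of the argument is formal, resting on the Poincar\'e--Lelong formula and the uniform curvature bound of Theorem \ref{thmsingmetric}.
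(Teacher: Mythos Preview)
Your proof is correct and follows essentially the same approach as the paper. The paper decomposes $\phi$ as
\[
\phi = \Big(\phi_{\widehat V} + \tfrac{\beta}{m}\log|r^*\sigma|^2\Big) - \beta(\pi\circ r)^*\phi_X + \big(\alpha\beta\,\phi_{r^*\Delta_{\widehat X}} - \phi_{\Delta_{\widehat V}}\big),
\]
observing that the first and last pieces are psh weights (the last by Proposition~\ref{propeffective} and $\alpha\beta>1$) and the middle is locally bounded; your introduction of $\tau=\sigma\otimes s_B$ and the coefficient-wise local estimate along each $F_i$ is just an explicit unpacking of why the last term is psh. For (ii) the paper computes $dd^c\phi$ directly on the good locus and bounds it below by $(C_0-\beta)(\pi\circ r)^*i\Theta(g_X)$ via Theorem~\ref{thmsingmetric}, which is exactly what your Poincar\'e--Lelong computation yields once you restrict away from the divisorial terms. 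One small point: your generic-point estimate in (i) really shows that the $\mathbb Q$-divisor $\alpha\beta\, r^*\Delta_{\widehat X}-\Delta_{\widehat V}$ is effective, which is a global statement and gives the upper bound everywhere (including along intersections of components), so there is no gap there.
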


\begin{proof}[Proof of Claim~\ref{claim1}]
To prove $(i)$, one observes that 
$$\phi=\Big(\phi_{\widehat V}+\frac\beta m \log |r^*\sigma|^2\Big)-\beta \cdotp (\pi \circ r)^*\phi_X+(\alpha\beta\cdotp \phi_{r^*\Delta_{\widehat X}}-\phi_{\Delta_{\widehat V}})$$
can be decomposed as the sum of two psh weights, one locally bounded weight and another weight which is psh thanks to Proposition~\ref{propeffective} together with the fact that $\alpha\beta>1$. In order to prove $(ii)$, one computes the curvature of the weight $\phi$ on $(\pi \circ r)^{-1}(X\setminus \mathrm{Sing}(p))$ as follows
\begin{align*}
dd^c \phi&= i\Theta(g_{\widehat V})-\beta \cdotp (\pi \circ r)^*i\Theta(g_X)+  \frac\beta m dd^c \log |r^*\sigma|^2\\
& \ge i\Theta(g_{\widehat V})-\beta \cdotp (\pi \circ r)^*i\Theta(g_X) \\
& \ge (C_0-\beta) \cdotp (\pi \circ r)^*i\Theta(g_X)
\end{align*}
where the last inequality follows from the inequality \eqref{eqcontrolcurv} in Theorem~\ref{thmsingmetric}.
\end{proof}

\medskip

The proof of the second point is very similar: we just have to perform the previous steps with $V = \mathbb C$ in a slightly more explicit manner, and then use the Ahlfors-Schwarz lemma (see \textsl{e.g.} \cite{dem12a}). 

Suppose then that there exists a non-constant holomorphic map $f: \mathbb C \longrightarrow X$ such that $f(\mathbb C) \not\subset \pi(\mathbb B(L_\alpha)) \cup \mathrm{Sing}(p) $. Now, if we perform the relative orbifold construction with $V = \mathbb C$, we see that $\widehat{V} \cong \mathbb C$, since this manifold admits a bimeromorphic map onto $\mathbb C$.

Moreover, since $\iota: \widetilde{V} \longrightarrow \Omega$ is non constant, we see that the universal cover of $\widetilde{V}$ must be isomorphic to the disk, and thus $i \Theta(h_{\widetilde{V}}) = h_{\widetilde{V}}^{-1}$. Pushing forward to $\widehat{V}$, we get that 
$$
i \Theta(g_{\widehat{V}}) = g_{\widehat{V}}^{-1}
$$
in restriction to the regular locus $V^\circ = \widehat{V} \setminus (f\circ\sigma  )^{-1}( \mathrm{Sing}(p))$.

Construct now the metric $h=e^{-\phi}$ on $\mathbb C = \widehat{V}$ using \eqref{phi}. By Claim~\ref{claim1} (ii), valid for $V$ pseudoconvex Kähler, we see that there exists a constant $\delta > 0 $ such that $i \Theta(h) \geq \delta i \Theta(g_{\widehat{V}})$ in \emph{restriction to $V^\circ$}. Now, we have, again in restriction to $V^\circ$: 
$$
\Theta(g_{\widehat{V}}) = g_{\widehat{V}}^{-1}
			\geq \frac{1}{\mathrm{sup}_{\widehat{X}} (r^*F)^{\frac{ \beta}{m}}} h^{-1}
$$
\noindent
Thus, in restriction to $V^\circ$, one gets
\begin{equation} \label{eqAS}
i \Theta(h) \geq C h^{-1}
\end{equation}
where $C = \frac{\delta}{\mathrm{sup}_{\widehat{X}} (r^*F)^{\frac{ \beta}{m}}}$.

\noindent
Finally, we see as in Claim~\ref{claim1} $(i)$ above that $\phi$ is locally bounded above near $\widehat V \setminus V^\circ$ and therefore $h=e^{-\phi}$ induces a positively curved metric on $K_{\widehat{V}}\simeq \mathcal O_{\mathbb C}$. This implies that \eqref{eqAS} holds \emph{everywhere on $\widehat{V} = \mathbb C$ in the sense of currents}. This is however absurd because of the Ahlfors-Schwarz lemma, cf. \cite[Lem.~3.2]{Dem95}.
\end{proof}

\bibliographystyle{amsalpha}
\bibliography{biblio2}
\end{document}